\newtheorem{Theorem}{Theorem}[section]
\newtheorem{Corollary}[Theorem]{Corollary}
\newtheorem{Lemma}[Theorem]{Lemma}
\newtheorem{Proposition}[Theorem]{Proposition}
\theoremstyle{definition}
\newtheorem{Remark}[Theorem]{Remark}
\newtheorem{Definition}[Theorem]{Definition}
\numberwithin{equation}{section}
\DeclareMathAlphabet\mathbb{U}{msb}{m}{n}
\newcommand{\mono}{\rightarrowtail}
\newcommand{\epi}{\twoheadrightarrow}
\def\AA{{\mathcal A}}
\def\ZZ{{\mathbb Z}}
\def\OO{{\mathcal O}}
\def\CC{{\mathcal C}}
\def\BB{{\mathcal B}}
\def\DD{{\mathcal D}}
\def\MM{{\mathcal M}}
\def\NN{{\mathcal N}}
\def\FF{{\mathcal F}}
\def\NNN{{\sf N\hspace{1pt}}}
\def\colim{{\rm colim}}
\begin{document}

\title{Higher colimits, derived functors and homology}
\author{Sergei O. Ivanov}

\address{
Laboratory of Modern Algebra and Applications,  St. Petersburg State University, 14th Line, 29b,
Saint Petersburg, 199178 Russia}\email{ivanov.s.o.1986@gmail.com}

\author{Roman Mikhailov}
\address{Laboratory of Modern Algebra and Applications, St. Petersburg State University, 14th Line, 29b,
Saint Petersburg, 199178 Russia and St. Petersburg Department of
Steklov Mathematical Institute} \email{rmikhailov@mail.ru}

\author{Vladimir Sosnilo}
\address{Laboratory of Modern Algebra and Applications, St. Petersburg State University, 14th Line, 29b,
Saint Petersburg, 199178 Russia}
\email{vsosnilo@gmail.com}

\thanks{The project is supported by the grant of the Government of the Russian Federation
for the state support of scientific research carried out under the supervision of leading scientists,
agreement 14.W03.31.0030 dated 15.02.2018}

\begin{abstract}
A theory of higher colimits over categories of free presentations is developed. It is shown that different homology functors such as Hoshcshild and cyclic homology of algebras over a field of characteristic zero, simplicial derived functors, and group homology can be obtained as higher colimits of simply defined functors. Connes' exact sequence linking Hochschild and cyclic homology was obtained using this approach as a corollary of a simple short exact sequence. As an application of the developed theory it is shown that the third reduced $K$-functor can be defined as the colimit of the second reduced $K$-functor applied to the fibre square of a free presentation of an algebra. A Hopf-type formula for odd dimensional cyclic homology of an algebra over a field of characteristic zero is also proved. 
\end{abstract}

\maketitle

\section{\bf Introduction}

For an "algebraic object" $A$ (for example, a group, an abelian group, an associative algebra, ...), denote
by ${\sf Pres}(A)$ the category of presentations of $A$ as a quotient of a free (or projective) object $F\twoheadrightarrow A$.
In \cite{IvanovMikhailov}, the authors developed the theory of derived limits over categories of free presentations and showed that many functors can be obtained as (derived) limits of certain simply defined functors ${\sf Pres}(A)\to ( {\sf modules\ over\ a\ commutative\ ring} )$. For a functor $\mathcal F$ from ${\sf Pres}(A)$ to the category of modules over a fixed commutative ring and any $c\in {\sf Pres}(A)$, there is a natural injection ${\rm lim}\: \mathcal F\hookrightarrow \mathcal F(c)$ and ${\rm lim}\: \mathcal F$ is the largest submodule of $\mathcal F(c)$ that does not depend on $c$. In other words, ${\rm lim}\: \mathcal F$ is the largest constant sub-functor of $\mathcal F.$ This approach gives a method to construct various functors. For example, for a group $G$ and the category of free presentations $F\twoheadrightarrow G$, there is a natural description of the first derived functor (in the sense of Dold-Puppe) of the symmetric cube $L_1S^3$ applied to the abelianization of $G$ as $L_1S^3(G_{ab})={\rm} {\rm lim}\frac{[F,F,F]}{[R,R,F][F,F,F,F]},$ where $R={\sf Ker}(F\epi G)$  [here we use the left-normalized notation for commutator subgroups] (see \cite{MP2016}). 
One can write any expression that functorially depends on a pair $(F,R)$ and apply the (derived) limit over ${\sf Pres}(G)$. 
Here $R$ and $F$ are used as bricks in the constructor. 
Group homology, Hochschild and cyclic homology of algebras, and certain derived functors can be obtained in this way \cite{IvanovMikhailov}, \cite{MP2016}, \cite{MP2017}.

This paper concerns (higher, derived) colimits over the categories of presentations. In the above notation, for any $c\in {\sf Pres}(A)$, we get a natural surjection $\mathcal F(c)\twoheadrightarrow{\rm colim}\: \mathcal F$ and ${\rm colim}\: \mathcal F$ is the largest quotient of $\mathcal F(c)$ that does not depend on $c$. In other words, ${\rm colim}\: \mathcal F$ is the largest constant quotient-functor of $\mathcal F.$ A simple illustration of the above statement about the {\it largest independent quotient} is the following. For an algebra $A$ and the category of non-unital free presentations $F\twoheadrightarrow A,$ one can obtain the first cyclic homology of $A$ as follows
$$
{\rm colim}\ R\cap [F,F]=HC_1(A),
$$
where $R={\sf Ker}(F\epi G).$
The needed largest quotient independent of the presentation is given by the Hopf formula: $R\cap [F,F]\twoheadrightarrow\frac{R\cap [F,F]}{[R,F]}=HC_1(A).$

At first glance, the theory of derived colimits looks like a mirror-dual analog of the theory of derived limits developed in the papers of the authors. In a sense, this is true, however, there is no way to transfer the principal methods from limits to colimits because the category ${\sf Pres}(A)$ is not self-dual. For example, for any $\mathcal F$ and $c\in {\sf Pres}(A)$, ${\rm lim}\: \mathcal F$ is the equalizer of two maps $\mathcal F(c)\rightrightarrows \mathcal F(c\sqcup c)$, where $\sqcup$ is the coproduct in ${\sf Pres}(A)$ (see section 2 in \cite{MP2017}). In the case of colimit, we can not present ${\rm colim}\: \mathcal F$ as a coequalizer of two such maps because $c\times c$ does not exist in ${\sf Pres}(A).$ We have to coequalize all the maps $\mathcal F(c')\to \mathcal F(c)$ for $c'\to c$ from ${\sf Pres}(A)$. As a corollary, we don't have a simple criterion of triviality of colimits, as we have in the case of limits (the property called {\it monoadditivity} in \cite{IvanovMikhailov}). Unlike the case of limits, the additive functors (with respect to coproducts in the category of representations) can give nontrivial colimits in all degrees. For example, for a group $G$ and the category of free presentations $R\rightarrowtail F\twoheadrightarrow G$, ${\rm lim}_n\: F_{ab}=0$ for all $n\geq 0$ (see \cite{IvanovMikhailov}), however, $${\rm colim}_n\: F_{ab}=H_{n+1}(G),$$ for $n\geq 0$ (Theorem \ref{thm_group_homology}).

In general, colimits can be viewed as a generalization of derived functors of non-additive functors in the following sense. Let $\mathcal C$ be a category with enough of projectives and $c\in \mathcal C.$ Denote by ${\sf Pres}(c)$  the category of effective epimorphisms $p\epi c,$ where $p$ is a projective object. Assume that $\varepsilon_\bullet: p_\bullet\to c$ is a simplicial projective resolution of $c$ (see Definition \ref{def_simpl_proj_res}) and $\varepsilon^{\sf P}_\bullet\in {\sf Pres}(c)^{\Delta^{\rm op}}$ is the corresponding simplicial presentation. Then for any ``good enough'' functor $\Phi:{\sf Pres}(c)\to {\sf Mod}(\Lambda)$ to the category of modules over some ring $\Lambda$  there is an isomorphism 
$$ \colim_n\ \Phi= \pi_n( \Phi( \varepsilon^{\sf P}_\bullet ) )$$
(see Theorem \ref{th_derived} for details). In particular, colimits give a way to define derived functors of non-additive functors not using simplicial resolutions (see Corollary \ref{cor_derived}). For example, given a group $G$   
and a functor $\Phi: ({\sf Groups})\to ({\sf Abelian\ groups}),$ the derived colimits ${\rm colim}_n\: \Phi(F)$ over the category of free presentations $R\rightarrowtail F\twoheadrightarrow G$ coincide with simplicial derived functors of $\Phi:$
$${\rm colim}_n\: \Phi(F)=L_n\Phi(G).$$
 (see Proposition \ref{derpropos}). Another simple example of usage of  Theorem \ref{th_derived} is the following formula for Andr\'e-Quillen homology of a commutative $k$-algebra $A$ with coefficients in an $A$-module $M$ (Proposition \ref{prop_Andre-Quillen})
\begin{equation}\label{eq_andre_quillen}
D_n(A/k,M)=\colim_n\: \Omega^{\rm comm}(F)\otimes_F M.
\end{equation} 
Here the derived colimit is taken over the category surjective homomorphisms $F\epi A,$ where $F$ is the (commutative) polynomial $k$-algebra; and $\Omega^{\rm comm}(F)$ is the module of K\"ahler differentials of $F$. 

The main results of this paper are the following.
\begin{enumerate}
\item A generalization of the description of the group homology mentioned above for the case of non-trivial coefficients. Let $G$ be a group, $M$ a $\mathbb Z[G]$-module $M$. For $n\geq 0$, (Theorem \ref{thm_group_homology})
$$H_{n+1}(G,M)= \colim_n\ H_1(F,M),$$
where the colimits are taken over the category of free presentations $F\twoheadrightarrow G$.

\item For the category of unital associative algebras over any field, and the category of free presentations $F\twoheadrightarrow A$, there is the following description of Hochschild homology with coefficients in an $A$-bimodule $M$ (Theorem  \ref{theorem_hochsch})
$$H_{n+1}(A,M)=\colim_n\: H_1(F,M)=\colim_n\: \Omega(F)\otimes_{F^e} M,$$
$$HH_{n+1}(A)=\colim_n\: \Omega(F)_\natural$$
for $n\geq 1,$
where $\Omega(F):={\rm Ker}(F\otimes F \epi F)$ is the bimodule of non-commutative differential forms of $F,$ $F^e=F^{op}\otimes F$ and $$M_\natural:=H_0(F,M)=M/[M,F].$$ 
(Compare the formula $ H_{n+1}(A,M)=\colim_n\  \Omega(F)\otimes_{F^e} M$ with the formula  \eqref{eq_andre_quillen}.)

\item For the category of unital associative algebras over a field of characteristic zero, and the category of free presentations $F\twoheadrightarrow A$, there is the following description of Hochschild and reduced cyclic homology (see Proposition \ref{connesseq}):
\begin{align*}
& \overline{HC}_{n+1}(A)={\rm colim}_n[F,F],\ n\geq 1,\\
& \overline{HC}_{n+3}(A)={\rm colim}_n[R,R],\ n\geq 1,\\
& HH_{n+2}(A)={\rm colim}_n \frac{[F,F]}{[R,R]},\ n\geq 2.
\end{align*}
The natural short exact sequence
$$ 0 \longrightarrow [R,R] \longrightarrow [F,F] \longrightarrow \frac{[F,F]}{[R,R]} \longrightarrow 0   $$
gives rise to the long exact sequence of (derived) colimits
\begin{multline*} \dots \to \overline{HC}_6(A) \to \overline{HC}_4(A) \to HH_5(A) \to \overline{HC}_5(A) \to\\  \overline{HC}_3(A) \to HH_4(A) \to \overline{HC}_4(A)\to \overline{HC}_2(A).
\end{multline*}
Moreover, there is an isomorphism 
$$ \overline{HC}_n(A)=\colim_n\: F_\natural,$$ 
for $n\geq 1$ (Theorem \ref{theorem_cyclic}) and the short exact sequence
$$ 0 \longrightarrow \bar F_\natural \longrightarrow \Omega(F)_\natural \longrightarrow [F,F] \longrightarrow 0$$
(Lemma \ref{lemma_Omega(F)_decomposition}) induces   
the Connes'-like long exact sequence as above (Proposition \ref{prop_connes_omega}).

\item Let $k$ be a noetherian regular commutative ring and $A$ a $k$-algebra. Then
$$
{\rm colim}\ \tilde K_2(F\times_AF)=\tilde K_3(A),
$$
where the colimit is taken over the category of free presentations $F\twoheadrightarrow A$ of $k$-algebras (Proposition \ref{colimK2}(1)). Here $\tilde K_n$ is the reduced $K$-functor ($n$th homotopy group of the homotopy fiber of the map of spectra $K(k)\to K(A)$), $F\times_AF$ is the fibred square of the epimorphism $F\twoheadrightarrow A$. Analogously, for a non-unital ring $A$,
$$
{\rm colim}\ K_2(F\times_AF)=K_3(A),
$$
where the colimit is taken over the category of free presentations of non-unital rings (Proposition \ref{colimK2}(2)).
\end{enumerate}

We also prove a Hopf-type formula for the odd dimensional cyclic homology of algebras over a field $k$ of characteristics zero (Theorem \ref{th_hopf}): for a non-unital free presentation of an algebra $A$, $F\twoheadrightarrow A$, and $n\geq 0$, there is a natural isomorphism
$$ HC_{2n+1}(A)=\frac{R^{n+1}\cap [F,F]}{[R,R^n]}.$$
Note that this isomorphism is strict, without (co)limits.

The paper is organized as follows. In Section 2, we consider general facts about colimits over categories. In particular, we show that, the colimit of a functor over a strongly connected category is the largest constant quotient (Proposition \ref{colimitintuitive}). In Section 3 we show that the simplicial derived functors can be presented as higher colimits. Section 4 is about groups, we show that the group homology with coefficients in any module can be defined as higher colimits of the first homology of the free group over the category of free presentations (Theorem \ref{thm_group_homology}). In Section 5 we give a description of the Hochschild and cyclic homology of unital algebras over fields of characteristics zero as higher colimits. As mentioned above, the Connes exact sequence which connects Hochschild and cyclic homology can be obtained as a sequence of higher colimits.
In the case of groups any functor that depends only on $R$ has trivial higher colimits (Proposition \ref{prop_Phi(R)}). In order to prove this we essentially use that a subgroup of a free group is free. However, for algebras it is not true. Moreover, the formula $\overline{HC}_{n+3}(A)=\colim_n\ [R,R]$ (Proposition \ref{prop_[F,F][R,F][R,R]}) holds. It can be informally  interpreted as the reason why the cyclic homology exists (and is different from Hochschild homology): because a subalgebra of a free algebra is not necessarily free. 
We also prove a Hopf-type formula for the odd dimensional cyclic homology of algebras over a field $k$ of characteristics zero (Theorem \ref{th_hopf_red}): for a free presentation of an algebra $A$, $F\twoheadrightarrow A$, and $n\geq 0$, there is a natural isomorphism
$ \overline{HC}_{2n+1}(A)=\frac{R^{n+1}\cap ([F,F]+k\cdot 1)}{[R,R^n]}.$
In Section 6, we give an analog of colimit formula and Hopf-type formulas for the cyclic homology of non-unital algebras. In Section 7, we consider $K_2$ and $K_3$-functors and give a description of the reduced $K_3$-functor as a colimit of reduced $K_2$-functors applied to the fibre squares.  

\section*{\bf Acknowledgments} The authors are grateful to Sergey Gorchinsky for discussions and helpful comments.

\section{\bf General facts about derived colimits over categories}

Let $\CC$ be a small category. Recall that its nerve is a simplicial set $\NNN\CC$ such that
$$(\NNN\CC)_0={\sf ob}(\CC),\hspace{1cm} (\NNN\CC)_1={\sf mor}(\CC), $$ the maps $d_0,d_1:(\NNN\CC)_1\to (\NNN\CC)_0$ and $s_0:(\NNN\CC)_0 \to (\NNN\CC)_1$ are defined as follows
$$ d_0(\alpha)={\sf dom}(\alpha), \hspace{1cm} d_1(\alpha)={\sf codom}(\alpha), \hspace{1cm} s_0(c)=1_c, $$
For higher dimensions $(\NNN\CC)_n$ is defined as the set of all sequences of $n$ composable morphisms
 $$ \bullet \xleftarrow{\alpha_1} \bullet \xleftarrow{\alpha_2} \dots \xleftarrow{\alpha_n} \bullet$$
faces are
$$d_i(\alpha_1,\dots,\alpha_n)=
\begin{cases}
(\alpha_2,\dots,\alpha_n), & i=0 \\
(\alpha_1,\dots,\alpha
_{i}\alpha_{i+1},\dots ,\alpha_n),& 1\leq i\leq n-1\\
(\alpha_1,\dots,\alpha_{n-1}), & i=n
\end{cases}
$$
and degeneracies are
$$s_i(\alpha_1,\dots,\alpha_n)=(\alpha_1,\dots, \alpha_{i},  1_{c_i} , \alpha_{i+1} ,\dots,\alpha_n),$$
where $c_i={\sf dom}(\alpha_i)$ for $i\geq 1$ and $c_0={\sf codom}(\alpha_1).$

Fix an abelian category $\AA$ with exact small direct sums (see \cite[Appendix II]{GabrielZisman}) and enough of projectives.  Since $\AA$ has all small direct sums, it also has all small colimits. Consider the category $\AA^\CC$  of functors
$\CC\to \AA.$ The following lemma seems to be well-known but we can't find a good reference.

\begin{Lemma}The category of functors $\AA^\CC$ is abelian, it has enough of projective objects and projective objects are direct summands of direct sums of functors of type $P^{(\CC(c,-))},$ where $P$ is a projective object of $\AA$ and $P^{(X)}=\bigoplus_X P$ for a set $X.$
\end{Lemma}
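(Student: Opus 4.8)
The plan is to reduce everything to the interaction between the evaluation functors and the ``free'' functors $P^{(\CC(c,-))}$. First I would record the standard fact that $\AA^\CC$ is abelian with kernels, cokernels, images and small (co)products all computed objectwise; in particular a morphism of functors is a monomorphism (resp.\ epimorphism) precisely when each of its components $F(c)\to G(c)$ is. This reduces exactness questions in $\AA^\CC$ to exactness questions in $\AA$, and in particular shows that each evaluation functor $\mathrm{ev}_c\colon\AA^\CC\to\AA$, $F\mapsto F(c)$, is exact.

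The heart of the argument is an adjunction. For a fixed $c\in\CC$ and $P\in\AA$ I would consider the functor $P^{(\CC(c,-))}$ sending $d$ to $\bigoplus_{\CC(c,d)}P$, with a morphism $g\colon d\to d'$ acting by the coproduct injection relabelling the summand indexed by $f$ as the summand indexed by $gf$. A direct computation (a linearised form of the Yoneda lemma) gives a natural isomorphism
$$ \AA^\CC\bigl(P^{(\CC(c,-))},F\bigr)\cong\AA\bigl(P,F(c)\bigr), $$
where a transformation is recovered from the image of $1_c$ in the summand indexed by $\CC(c,c)$; thus $P\mapsto P^{(\CC(c,-))}$ is left adjoint to $\mathrm{ev}_c$. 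Since $\mathrm{ev}_c$ is exact, its left adjoint preserves projectives, so $P^{(\CC(c,-))}$ is projective in $\AA^\CC$ whenever $P$ is projective in $\AA$. (Here I use that $\CC$ is small and that $\AA$ has small direct sums in order to form $P^{(\CC(c,d))}$.)

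To prove there are enough projectives, I would take an arbitrary $F$ and, for each object $c$ of the small category $\CC$, choose a projective $P_c$ in $\AA$ together with an epimorphism $P_c\epi F(c)$. Via the adjunction each of these corresponds to a morphism $P_c^{(\CC(c,-))}\to F$, and I would assemble them into
$$ \bigoplus_{c\in\CC} P_c^{(\CC(c,-))}\longrightarrow F. $$
Evaluating at any object $d$, the summand with $c=d$ and $f=1_d$ contributes the chosen epimorphism $P_d\epi F(d)$, so the component at $d$ is an epimorphism; since epimorphisms are detected objectwise, the whole map is an epimorphism. The source is a direct sum of projectives, hence projective, which gives enough projectives.

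Finally, the characterisation of projectives follows formally: if $Q$ is projective, the epimorphism just constructed onto $Q$ splits, exhibiting $Q$ as a direct summand of a direct sum of functors $P^{(\CC(c,-))}$ with $P$ projective in $\AA$. The only genuinely delicate point is the verification of the adjunction isomorphism together with its naturality in $F$; everything else is bookkeeping with objectwise (co)limits and the standard fact that a left adjoint of an exact functor preserves projective objects.
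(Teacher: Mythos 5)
Your proposal is correct and follows essentially the same route as the paper: the Yoneda-type adjunction $\AA^\CC(P^{(\CC(c,-))},F)\cong\AA(P,F(c))$, projectivity of $P^{(\CC(c,-))}$ from exactness of evaluation, the epimorphism $\bigoplus_{c}P_c^{(\CC(c,-))}\epi F$ checked on the identity summands, and the splitting argument for the characterisation of projectives. The only cosmetic difference is that you phrase the projectivity step as ``a left adjoint of an exact functor preserves projectives,'' while the paper directly observes that $\AA^\CC(P^{(\CC(c,-))},-)\cong\AA(P,(-)(c))$ is exact; these are the same argument.
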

\begin{proof} It is obvious that $\AA^\CC$ is abelian.
Note that for any $c\in \CC$ there is an adjunction $\AA^\CC \leftrightarrows \AA $ of functors $\MM \mapsto \MM(c)$ and $A\mapsto A^{(\CC(c,-))}.$ In other words, there is a Yoneda-like isomorphism with the same proof as the proof of the Yoneda lemma $$\AA^\CC(A^{(\CC(c,-))},\MM)=\AA(A,\MM(c)).$$ This isomorphism implies that the functor $\AA^\CC(P^{(\CC(c,-))},-)$ is exact for any projective $P\in \AA,$ and hence $P^{(\CC(c,-))}$ is  projective in the category of functors. For any functor $\MM$ and any object $c\in \CC$ we  fix an epimorphism $P_c\epi \MM(c)$ from a projective object $P_c$ and consider the direct sum of adjoint morphisms $\bigoplus_{c\in \CC} P_c^{(\CC(c,-))} \epi \MM.$ It is easy to see that it is an epimorphism. Thus $\AA^\CC$ has enough of projectives. Moreover, if $\MM$ is projective, the epimorphism splits and we obtain that $\MM$ is a direct summand of a direct sum of functors $P_c^{(\CC(c,-))}.$
\end{proof}

The functor of colimit
$$\colim: \AA^\CC\longrightarrow \AA$$ is left adjoint to the diagonal functor. Hence it
is right exact, and we denote by $\colim_n$ its $n$-th left derived functor.

Let $\MM\in \AA^\CC.$ Consider the simplicial object $C_\bullet(\CC,\MM)$ in $\AA$ (see {\cite[Appendix II,3.2]{GabrielZisman}}) such that
$$C_0(\CC,\MM)=\bigoplus_{c\in (\NNN\CC)_0} \MM(c),$$
$$C_n(\CC,\MM)= \bigoplus_{(\alpha_1,\dots,\alpha_n)\in (\NNN\CC)_n}\MM({\sf dom}(\alpha_n)).$$
The boundary map $d_i:C_n(\CC,\MM)\to C_{n-1}(\CC,\MM)$ is defined so that the restriction on the summand $\MM({\sf dom}(\alpha_n))$ with the index $(\alpha_1,\dots,\alpha_n)$ is just the embedding of the summand with the index $d_i((\alpha_1,\dots,\alpha_n))$ for $i\leq n-1$ and the restriction of  $d_n$ on the same summand is the map
$$\MM(\alpha_n): \MM({\sf dom}(\alpha_n)) \longrightarrow  \MM({\sf codom}(\alpha_{n})) $$
composed with the embedding of the summand with the index $d_n((\alpha_1,\dots,\alpha_n)).$ Degeneracy maps $s_i$ are defined so that  the restriction on the summand with the index $\alpha$ is the embedding to the summand with the index $s_i(\alpha).$  As usual, we  define $\delta_n=\sum (-1)^i d_i :C_n(\CC,\MM)\to C_{n-1}(\CC,\MM)$ and treat $C_\bullet(\CC,\MM)$ as a complex.
  This complex computes the derived colimits:
\begin{Proposition}[{\cite[Appendix II, Proposition 3.3]{GabrielZisman}}]\label{prop_iso_colim_homol} Let $\CC$ be a small category and $\MM:\CC\to \AA$ be a functor. Then there is a natural isomorphism
$$\colim_n\: \MM\cong H_n(C_\bullet(\CC,\MM)).$$
\end{Proposition}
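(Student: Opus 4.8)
The plan is to identify $H_n(C_\bullet(\CC,\MM))$ with the left derived functors $\colim_n\MM$ by checking the three properties that characterize a left-derived $\delta$-functor: that the assignment is homological in $\MM$, that it recovers $\colim$ in degree zero, and that it is effaceable (vanishes on projectives). Once these are in place, the universal property of derived functors forces a canonical isomorphism with $\colim_n$.

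First I would observe that $\MM\mapsto C_\bullet(\CC,\MM)$ is an \emph{exact} functor from $\AA^\CC$ to chain complexes in $\AA$. Each term $C_n(\CC,\MM)$ is a direct sum of evaluations $\MM(\mathsf{dom}(\alpha_n))$; since evaluation at an object is exact and direct sums in $\AA$ are exact by our standing hypothesis, a short exact sequence $0\to\MM'\to\MM\to\MM''\to 0$ in $\AA^\CC$ produces a short exact sequence of complexes, and hence a long exact homology sequence. Thus $\{H_n(C_\bullet(\CC,-))\}_{n\ge 0}$ is a homological $\delta$-functor. For degree zero, the description of $\delta_1=d_0-d_1$ shows that its image inside $C_0(\CC,\MM)=\bigoplus_{c}\MM(c)$ is generated, summand by summand, by the elements $x-\MM(\alpha)(x)$ for $\alpha\in{\sf mor}(\CC)$ and $x\in\MM(\mathsf{dom}(\alpha))$; factoring out these relations is precisely the construction of $\colim\MM$, so $H_0(C_\bullet(\CC,\MM))\cong\colim\MM$ naturally.

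The real work is the vanishing $H_n(C_\bullet(\CC,\MM))=0$ for $n>0$ when $\MM$ is projective. By the preceding Lemma, projectives are direct summands of direct sums of functors of type $P^{(\CC(c,-))}$; since $C_\bullet(\CC,-)$ commutes with arbitrary direct sums and homology commutes with direct sums and with passage to direct summands (again using that direct sums are exact), it suffices to treat $\MM=P^{(\CC(c,-))}$. For such $\MM$ the summands of $C_n(\CC,\MM)$ are indexed by a chain $(\alpha_1,\dots,\alpha_n)$ together with a morphism $\beta\colon c\to\mathsf{dom}(\alpha_n)$, that is, by simplices of the nerve of the coslice category $c\backslash\CC$. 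The identity $1_c$ is an initial object of $c\backslash\CC$, and appending it supplies an extra degeneracy $s_{-1}\colon C_n(\CC,\MM)\to C_{n+1}(\CC,\MM)$ carrying the summand indexed by $(\alpha_1,\dots,\alpha_n,\beta)$ to the one indexed by $(\alpha_1,\dots,\alpha_n,\beta,1_c)$. Verifying the simplicial identities for this extra degeneracy yields a contracting homotopy of the augmented complex, so $C_\bullet(\CC,P^{(\CC(c,-))})$ is acyclic in positive degrees.

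Finally, a homological $\delta$-functor that agrees with the right exact functor $\colim$ in degree zero and is effaceable is canonically isomorphic to the sequence of left derived functors $\colim_n$, which gives the asserted natural isomorphism. I expect the main obstacle to be the third step: organizing the bookkeeping of the extra degeneracy and the simplicial identities correctly, and carefully justifying the reduction to representable-type projectives through the commutation of both $C_\bullet(\CC,-)$ and homology with arbitrary direct sums, which rests essentially on the exactness of direct sums in $\AA$.
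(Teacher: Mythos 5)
Your argument is correct. Note, however, that the paper does not prove this statement at all: it is imported verbatim from Gabriel--Zisman (Appendix II, Proposition 3.3), so there is no in-text proof to compare against. What you have written is the standard self-contained derivation: $C_\bullet(\CC,-)$ is exact because each $C_n$ is a direct sum of evaluations and $\AA$ has exact direct sums; $H_0$ is the coequalizer presentation of $\colim$; and for $\MM=P^{(\CC(c,-))}$ the complex is $C_\bullet\bigl(\NNN(c\downarrow\CC)\bigr)\otimes P$, which is acyclic in positive degrees because $c\downarrow\CC$ has the initial object $1_c$, yielding the extra degeneracy you describe; universality of effaceable $\delta$-functors then finishes the job. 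It is worth pointing out that your key computation is not foreign to the paper: in the proof of Proposition \ref{prop_preserving_of_colimits} the authors establish exactly the identification $C_\bullet(\CC,P^{(\DD(d,\Phi(-)))})=C_\bullet(d\!\downarrow\!\Phi,P)$ and invoke contractibility of the comma category (via Proposition \ref{prop_colim_of_constant}), which is your step specialized to $\Phi={\sf Id}$ but run in the opposite logical direction --- they assume Proposition \ref{prop_iso_colim_homol} and deduce acyclicity, whereas you prove acyclicity directly by the explicit contracting homotopy and deduce the proposition. The only bookkeeping worth double-checking in your write-up is the direction conventions: the paper's $n$-simplices are chains of left-pointing arrows with the module placed at ${\sf dom}(\alpha_n)$ and the ``interesting'' face is $d_n$, so your extra degeneracy must append the structure morphism $\beta\colon c\to{\sf dom}(\alpha_n)$ at the $\alpha_{n+1}$ slot with new structure map $1_c$; with that convention $d_ih=hd_i$ for $i\le n$ and $d_{n+1}h={\sf id}$, giving $\delta h-h\delta=\pm{\sf id}$ as needed.
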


This isomorphism is natural in the following sense. Any functor $\Phi:\CC\to \DD$ between small categories defines a morphism of complexes
$$ C_\bullet(\varphi): C_\bullet(\CC,\MM\Phi ) \longrightarrow C_\bullet(\DD,\MM) $$
that sends the summand with the index $(\alpha_1,\dots,\alpha_n)$ to the summand with the index $(\Phi(\alpha_1),\dots,\Phi(\alpha_n)).$ This morphism of complexes induces a morphism on homology, whose composition with the isomorphisms coincide with the natural map
$$ \colim_n\: \MM\Phi \longrightarrow
 \colim_n\: \MM. $$

A simplicial set $X$ is called contractible, if $ |X|$ is contractible. It is also equivalent to the fact that $X\to *$ is a weak equivalence. A category $\CC$ is said to be contractible if its nerve is contractible. Note that, if there exists a sequence of functors connecting the identity functor and a constant functor ${\sf Id}_\CC=\Phi_1,\dots,\Phi_{2n}=c_0^{\sf const}:\CC\to \CC$ together with natural transformations
$${\sf Id}_\CC=\Phi_0\to \Phi_1 \leftarrow \Phi_2 \to \dots \leftarrow \Phi_{2n}=c_0^{\sf const},$$
then $\CC$ is contractible because these natural transformations induce homotopies between the corresponding maps on the nerve (see \cite[Prop. 2]{QuillenK}). 

\begin{Proposition}\label{prop_coproduct}
Assume that there exists an object  $c_0\in \CC$  such that for any object $c\in \CC$ the coproduct $c\sqcup c_0$ exists. Then $\CC$ is contractible.
\end{Proposition}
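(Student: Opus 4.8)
The plan is to invoke the criterion recorded immediately before the statement: to show that $\CC$ is contractible it suffices to produce a finite zigzag of natural transformations connecting the identity functor ${\sf Id}_\CC$ to the constant functor $c_0^{\sf const}$. I will produce the shortest possible such zigzag, of the form ${\sf Id}_\CC \to F \leftarrow c_0^{\sf const}$, where $F$ is the endofunctor ``coproduct with $c_0$''.

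First I would fix, for every object $c\in\CC$, a choice of coproduct $c\sqcup c_0$ together with its two structure inclusions $\iota_c\colon c\to c\sqcup c_0$ and $\jmath_c\colon c_0\to c\sqcup c_0$; these exist by hypothesis. I then define an endofunctor $F\colon\CC\to\CC$ on objects by $F(c)=c\sqcup c_0$, and on a morphism $f\colon c\to c'$ by $F(f)=f\sqcup {\sf id}_{c_0}$, the unique morphism $c\sqcup c_0\to c'\sqcup c_0$ determined by $\iota_{c'}\circ f$ and $\jmath_{c'}$ via the universal property. Functoriality (preservation of identities and composition) is a routine consequence of the uniqueness part of that universal property.

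Next I would exhibit the two natural transformations. The inclusions $\iota_c$ assemble into a natural transformation $\iota\colon {\sf Id}_\CC\to F$: for $f\colon c\to c'$ the square $F(f)\circ\iota_c=\iota_{c'}\circ f$ commutes precisely because $F(f)$ restricts to $\iota_{c'}\circ f$ on the first summand. Similarly the inclusions $\jmath_c$ assemble into a natural transformation $\jmath\colon c_0^{\sf const}\to F$: since $F(f)$ restricts to $\jmath_{c'}$ on the $c_0$-summand, one has $F(f)\circ\jmath_c=\jmath_{c'}=\jmath_{c'}\circ{\sf id}_{c_0}$, which is exactly naturality against the constant functor. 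This yields the zigzag
$$ {\sf Id}_\CC \xrightarrow{\ \iota\ } F \xleftarrow{\ \jmath\ } c_0^{\sf const}, $$
and the criterion cited above gives that $\CC$ is contractible.

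The only genuinely delicate point is the passage from the pointwise coproducts to an honest functor $F$: because $c\sqcup c_0$ is defined only up to canonical isomorphism, one must fix a representative for each $c$ and then define $F$ on morphisms through the universal property rather than by any formula, after which naturality of $\iota$ and $\jmath$ is forced. I expect this to be the main (though still routine) obstacle; everything else is a direct application of the already-established homotopy criterion for nerves of categories.
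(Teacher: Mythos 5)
Your proof is correct and is essentially identical to the paper's: both construct the endofunctor $\Phi(-)=-\sqcup c_0$ and the zigzag ${\sf Id}_\CC\to\Phi\leftarrow c_0^{\sf const}$, then invoke the cited homotopy criterion for nerves. You merely spell out the (routine) choice-of-coproducts and naturality checks that the paper leaves implicit.
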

\begin{proof}
Consider the functor $\Phi:\mathcal{C}\to
\mathcal{C}$ given by the formula $\Phi(-)=-\sqcup c_0.$ The maps $c\to c\sqcup c_0 \leftarrow c_0$ give natural transformations from
the identity functor $\mathsf{Id}_\mathcal{C}\to \Phi \leftarrow {c_0}^{\sf const}.$ It follows that $\CC$ is contractible. 
\end{proof}

\begin{Proposition}\label{prop_colim_of_constant}
Let $\CC$ be a contractible category and $A\in \AA$ be an object. Consider a constant functor $A^{\sf const}:\CC\to \AA.$ Then $$\colim_0\: A^{\sf const}=A, \hspace{1cm} \colim_n\:  A^{\sf const}=0 $$
for $n\geq 1.$
\end{Proposition}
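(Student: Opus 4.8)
The plan is to reduce the statement to a homology computation via Proposition \ref{prop_iso_colim_homol}, which already identifies $\colim_n\, A^{\sf const}\cong H_n(C_\bullet(\CC,A^{\sf const}))$. The first observation I would make is that for a constant functor every structure map $A^{\sf const}(\alpha_n)=\mathrm{id}_A$ is the identity, so by the very definition of $C_\bullet$ the last face $d_n$ carries no information beyond the combinatorics of the nerve. Thus $C_\bullet(\CC,A^{\sf const})$ is exactly $A^{((\NNN\CC)_\bullet)}$ equipped with the alternating sum of the face maps induced by the simplicial structure of $\NNN\CC$. Equivalently, it is the image of the (unnormalized) integral chain complex of $\NNN\CC$ under the additive functor $A\otimes_\ZZ-$ which sends a free abelian group $\ZZ^{(S)}$ to $A^{(S)}=\bigoplus_S A$ and an integer matrix to the corresponding morphism of direct sums; this functor exists because $\AA$ has all small direct sums, and it is well defined on morphisms precisely because the defining matrices are column-finite.

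Next I would bring in contractibility. Since $\NNN\CC\to *$ is a weak equivalence of simplicial sets, it is standard that the free simplicial abelian group functor preserves it, so $\ZZ[\NNN\CC]\to \ZZ[*]$ is a weak equivalence of simplicial abelian groups; that is, the associated complexes $\ZZ^{((\NNN\CC)_\bullet)}\to \ZZ^{(*_\bullet)}$ form a quasi-isomorphism. Both are bounded below complexes of free abelian groups, and a quasi-isomorphism between such complexes is automatically a chain homotopy equivalence. The hard part will be exactly this middle step: one must justify that $\ZZ[-]$ carries the weak equivalence to a quasi-isomorphism and then upgrade it to a genuine chain homotopy equivalence of free complexes, since it is this extra strength, rather than a mere homology isomorphism, that the remainder of the argument relies on.

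Finally I would transport this equivalence and compute. The key point is that a chain homotopy equivalence is expressed purely through addition and composition of morphisms, hence is preserved by \emph{every} additive functor with no flatness hypothesis on $A$; applying $A\otimes_\ZZ-$ therefore yields a chain homotopy equivalence $C_\bullet(\CC,A^{\sf const})=A^{((\NNN\CC)_\bullet)}\xrightarrow{\ \simeq\ }A^{(*_\bullet)}$, so the two complexes have isomorphic homology. It then remains to compute the homology of the target directly: the simplicial set $*$ has a single simplex in each degree with all faces the identity, so $A^{(*_\bullet)}$ is the complex $\cdots \to A\xrightarrow{0}A\xrightarrow{\mathrm{id}}A\xrightarrow{0}A$ whose differentials $\delta_n=\left(\sum_{i=0}^n(-1)^i\right)\mathrm{id}_A$ alternate between $0$ for odd $n$ and $\mathrm{id}_A$ for even $n$. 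A one-line computation gives $H_0=A$ and $H_n=0$ for $n\geq 1$, which is precisely the assertion.
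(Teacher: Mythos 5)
Your proof is correct and follows essentially the same route as the paper's: identify $C_\bullet(\CC,A^{\sf const})$ as the image of the integral chain complex of $\NNN\CC$ under the additive functor $\ZZ^{(I)}\mapsto A^{(I)}$, use contractibility plus freeness to upgrade the quasi-isomorphism to a chain homotopy equivalence, and transport it through the additive functor. The only cosmetic difference is that the paper contracts directly onto $\ZZ$ concentrated in degree $0$, whereas you land on the unnormalized chain complex of the point and finish with the alternating-identity computation; both are fine.
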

\begin{proof}
Denote by ${\sf fAb}$ the category of free abelian groups. Then there is a well defined functor of tensor product $\otimes : {\sf fAb}\times \mathcal A\to \mathcal A$ such that $\mathbb Z^{(I)} \otimes A=A^{(I)}.$ 
Note that $$C_\bullet(\mathcal C,A^{\sf const})=C_\bullet({\sf N} \mathcal C)\otimes A,$$ where 
$C_\bullet({\sf N} \mathcal C)$ is the standard chain complex of the simplicial set ${\sf N}\mathcal C.$ Since ${\sf N}\mathcal C$ is contractible, the map $C_\bullet({\sf N}\mathcal C)\to \ZZ[0]$ is a quasi-isomorphism. Using that the complex consists of free abelian groups, we obtain that the map $C_\bullet({\sf N}\mathcal C)\to \ZZ[0]$ is a homotopy equivalence. Then $C_\bullet({\sf N} \mathcal C)\otimes A \to A[0]$ is a homotopy equivalence. 
\end{proof}

During the paper we use homological notation for a complex $M_\bullet:$
$${\dots} \leftarrow  M_{n-1} \leftarrow M_n \leftarrow M_{n+1} \leftarrow \dots.$$

\begin{Proposition}[Spectral sequence of derived colimits I]\label{prop_spectral_sequence_of_colimits} Let $\CC$ be a contractible category, $\MM_\bullet$ be a complex of functors $\CC\to \AA$ such that $\MM_n=0$ for $n<\!<0.$ Assume that its homology $H_n(\MM_\bullet)$ are isomorphic to constant functors. Then there exists a spectral sequence of homological type $E$ in $\AA$ such that
$$E^1_{n,m}= \colim_m\ \MM_n \Rightarrow H_{n+m}(\MM_\bullet).$$
\end{Proposition}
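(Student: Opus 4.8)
The plan is to assemble a double complex from the bar-type complexes $C_\bullet(\CC,-)$ that compute derived colimits, and then play its two spectral sequences against each other. Concretely, I would form the double complex concentrated in degrees $m\ge 0$,
$$D_{n,m} = C_m(\CC,\MM_n),$$
whose vertical differential (in the bar direction $m$) is the differential $\delta$ of Proposition \ref{prop_iso_colim_homol}, and whose horizontal differential (in the direction $n$) is $C_m(\CC,\partial)$, induced by the differential $\partial$ of $\MM_\bullet$ through the functoriality of $C_m(\CC,-)$ in its functor argument (with the usual sign twist making the two differentials anticommute). Since $\MM_n=0$ for $n\ll 0$ and $C_m=0$ for $m<0$, for each fixed total degree $k=n+m$ one has $0\le m\le k-N_0$, where $N_0$ is a lower bound for the support of $\MM_\bullet$; hence each total degree of $\mathrm{Tot}(D)$ is a finite direct sum, $\mathrm{Tot}(D)$ is well defined, and both spectral sequences of $D$ converge to $H_{n+m}(\mathrm{Tot}(D))$.

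For the spectral sequence asserted in the statement I would take homology in the bar direction $m$ first. By Proposition \ref{prop_iso_colim_homol} we have $H_m(C_\bullet(\CC,\MM_n))=\colim_m\MM_n$, so this yields $E^1_{n,m}=\colim_m\MM_n$, with $d^1$ equal to the map induced by $\partial$ on derived colimits; this is exactly the $E^1$-page of the statement, and it abuts to $H_{n+m}(\mathrm{Tot}(D))$.

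It then remains to identify the abutment, and for this I would run the other spectral sequence, taking homology in the direction $n$ first. The key observation is that $C_m(\CC,-)$ is, by definition, a small direct sum of evaluations $\MM_n(\mathsf{dom}(\alpha_m))$; since $\AA$ has exact small direct sums, homology commutes with this direct sum, giving $E^1_{n,m}=C_m(\CC,H_n(\MM_\bullet))$ for this second sequence. By hypothesis $H_n(\MM_\bullet)$ is a constant functor, so taking the remaining bar homology and invoking Proposition \ref{prop_colim_of_constant} (which is where contractibility of $\CC$ enters) collapses the page: $\colim_m H_n(\MM_\bullet)$ equals $H_n(\MM_\bullet)$ for $m=0$ and vanishes for $m\ge 1$. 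Thus the second spectral sequence degenerates onto the $m=0$ row and computes $H_{n+m}(\mathrm{Tot}(D))=H_{n+m}(\MM_\bullet)$, identifying the constant functor with its value in $\AA$. Comparing the two abutments gives the desired spectral sequence $E^1_{n,m}=\colim_m\MM_n\Rightarrow H_{n+m}(\MM_\bullet)$.

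The main difficulty is bookkeeping rather than conceptual: one must fix sign and index conventions so that the two differentials of $D$ genuinely anticommute, and verify that the boundedness hypothesis $\MM_n=0$ for $n\ll 0$ really secures convergence of both spectral sequences in $\AA$. The single substantive input is the interchange of homology with the small direct sums defining $C_m(\CC,-)$, which is precisely where the standing assumption that $\AA$ has exact small direct sums is used.
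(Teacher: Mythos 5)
Your argument is correct and is essentially the paper's own proof: the paper invokes the two hyper-homology spectral sequences of the functor $\colim$ applied to the bounded-below complex $\MM_\bullet$, degenerates the first one via Proposition \ref{prop_colim_of_constant} using constancy of $H_*(\MM_\bullet)$ and contractibility of $\CC$ to identify the abutment with $H_{n+m}(\MM_\bullet)$, and reads off the second, exactly as you do. The only difference is presentational: you realize these spectral sequences concretely as the two spectral sequences of the explicit double complex $D_{n,m}=C_m(\CC,\MM_n)$, using exactness of small direct sums in $\AA$ where the paper simply cites the hyper-homology spectral sequences abstractly.
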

\begin{proof}
Consider two hyper-homology spectral sequences for the functor $\colim:\AA^\CC\to \AA$ and the complex $\MM_\bullet:$  $${}^{I}E^2_{n,m}=  \colim_n\  H_m(\MM_\bullet) \Rightarrow \colim_{n+m}(\MM),  $$
and
$${}^{II} E^1_{n,m}=\colim_m  \ \MM_n \Rightarrow \colim_{n+m}(\MM).$$
Since $H_m(\MM_\bullet)$ is constant, Proposition \ref{prop_colim_of_constant} implies that $\colim_n\  H_m(\MM_\bullet)=0$ for $n\geq 0,$ $\colim_0\  H_m(\MM_\bullet)=H_m(\MM_\bullet),$ and hence, $ \colim_m(\MM_\bullet) =  H_m(\MM_\bullet).$  Then the second spectral sequence is the spectral sequence that we need.
\end{proof}

\begin{Proposition}[Spectral sequence of derived colimits II]\label{prop_spectral_sequence_of_colimits_II} Let $\MM_\bullet$ be a complex of functors $\CC\to \AA$ such that $\MM_n=0$ for small enough $n.$ Assume that
$\colim_{m}\ \MM_n=0  $ for $m\ne 0$ and any $n.$ Set $\MM_n^0=\colim_0\ \MM_n.$ Then there exists a converging spectral sequence of homological type such that
$$E^2_{n,m}=\colim_n\ H_m(\MM_\bullet) \Rightarrow H_{n+m} (\MM^0_\bullet).$$
\end{Proposition}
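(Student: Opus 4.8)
The plan is to run exactly the same machinery as in the proof of Proposition \ref{prop_spectral_sequence_of_colimits}, but with the roles of the two hyper-homology spectral sequences interchanged. Since $\colim:\AA^\CC\to\AA$ is right exact with left derived functors $\colim_m$, and $\MM_\bullet$ is a complex of functors that is bounded below, one has the two hyper-homology spectral sequences of $\colim$ applied to $\MM_\bullet$, both abutting to the hyper-derived colimit $\colim_{n+m}(\MM_\bullet)$:
$${}^I E^2_{n,m}=\colim_n\ H_m(\MM_\bullet)\ \Rightarrow\ \colim_{n+m}(\MM_\bullet),$$
$${}^{II}E^1_{n,m}=\colim_m\ \MM_n\ \Rightarrow\ \colim_{n+m}(\MM_\bullet).$$
These exist and converge because $\MM_\bullet$ vanishes in small degrees; one builds them from a Cartan--Eilenberg resolution, or equivalently from the bicomplex $C_\bullet(\CC,P_{\bullet\bullet})$ associated to a termwise projective resolution $P_{\bullet\bullet}\to\MM_\bullet$ in $\AA^\CC$ together with Proposition \ref{prop_iso_colim_homol}.

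First I would analyze the second spectral sequence. By hypothesis $\colim_m\ \MM_n=0$ for $m\ne 0$, so ${}^{II}E^1_{n,m}$ is concentrated on the single row $m=0$, where ${}^{II}E^1_{n,0}=\colim_0\ \MM_n=\MM_n^0$. The only possibly nonzero differential is then the horizontal $d^1$, and one checks that under the identification ${}^{II}E^1_{\bullet,0}=\MM^0_\bullet$ it coincides with the differential of the complex $\MM^0_\bullet$ obtained by applying the functor $\colim_0$ to the differentials of $\MM_\bullet$. Hence the second spectral sequence degenerates at the $E^2$-page and yields a natural isomorphism $\colim_n(\MM_\bullet)\cong H_n(\MM^0_\bullet)$ for the hyper-derived colimit.

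Then I would substitute this identification of the abutment into the first spectral sequence. Its $E^2$-page is $\colim_n\ H_m(\MM_\bullet)$ and it now abuts to $\colim_{n+m}(\MM_\bullet)\cong H_{n+m}(\MM^0_\bullet)$, which is precisely the asserted spectral sequence $E^2_{n,m}=\colim_n\ H_m(\MM_\bullet)\Rightarrow H_{n+m}(\MM^0_\bullet)$.

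The main obstacle is not the formal algebra of the two spectral sequences, which is automatic once they are in hand, but justifying their existence and convergence: since neither $\MM_\bullet$ nor its homology is assumed bounded above, one must use the boundedness-below hypothesis $\MM_n=0$ for small $n$ to guarantee both that the hyper-derived colimit is well defined and that the spectral sequences converge. The one computational point worth spelling out is the identification of the $d^1$-differential of ${}^{II}E$ with the differential of $\MM^0_\bullet$; this follows from the functoriality and right exactness of $\colim_0$, but it is what makes the collapse of the second spectral sequence compute the homology of $\MM^0_\bullet$ rather than merely its terms.
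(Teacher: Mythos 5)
Your proposal is correct and follows essentially the same route as the paper: both use the two hyper-homology spectral sequences of $\colim$ applied to $\MM_\bullet$, collapse the second one via the acyclicity hypothesis to identify the abutment with $H_{n+m}(\MM^0_\bullet)$, and then read off the first spectral sequence as the desired one. The extra care you take with the $d^1$-identification and convergence is a reasonable elaboration of details the paper leaves implicit, not a different argument.
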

\begin{proof}
Consider two hyper-homology spectral sequences for the functor $\colim:\AA^\CC\to \AA$ and the complex $\MM_\bullet:$  $${}^{I}E^2_{n,m}=  \colim_n\  H_m(\MM_\bullet) \Rightarrow \colim_{n+m}(\MM),  $$
and
$${}^{II} E^1_{n,m}=\colim_m  \ \MM_n \Rightarrow \colim_{n+m}(\MM).$$
Since $\colim_{m} \MM_n=0  $ for $m\ne 0$,  $ \colim_m(\MM_\bullet) =  H_m(\MM_\bullet^0).$  Then the first spectral sequence is the spectral sequence that we need.
\end{proof}

For a functor $\Phi:\CC\to \DD$ and an object $d\in \DD$ we denote by $d \!\downarrow\! \Phi$ the comma-category. Its objects are couples $(c,\alpha:d\to \Phi(c)),$ where $c\in \CC$ and $\alpha\in \DD(d,\Phi(c)).$ Its morphisms $f:(c,\alpha)\to (c',\alpha') $ are morphisms  $f\in \CC(c,c')$ such that $\Phi(f)\alpha=\alpha'.$

\begin{Proposition}[{cf. Quillen's Theorem A  \cite{QuillenK}}]\label{prop_preserving_of_colimits} Let $\CC,\DD$ be small categories, $\Phi:\CC\to \DD$ be a functor and $\MM:\DD\to \AA$. Assume that the category $d \!\downarrow\! \Phi$ is  contractable for any $d\in \DD.$ Then
$$ {\colim_n}\: \MM\Phi \cong {\colim_n}\: \MM$$
for any $n.$
\end{Proposition}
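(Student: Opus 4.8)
The plan is to realize both sides as homology of the nerve-type complexes $C_\bullet(\CC,\MM\Phi)$ and $C_\bullet(\DD,\MM)$ via Proposition \ref{prop_iso_colim_homol}, and to show that the map $C_\bullet(\Phi)$ induced by $\Phi$ is a quasi-isomorphism under the contractibility hypothesis on the comma-categories $d\downarrow\Phi$. The morphism of complexes $C_\bullet(\Phi)$ was already introduced in the paragraph following Proposition \ref{prop_iso_colim_homol}, and its induced map on homology is precisely the natural map $\colim_n \MM\Phi \to \colim_n \MM$ we wish to prove is an isomorphism. So the statement reduces entirely to showing $C_\bullet(\Phi)$ is a quasi-isomorphism.

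The main technical device will be a suitable homotopy-colimit / bicomplex decomposition indexed by $\DD$. First I would form, for each object $d\in\DD$, the comma-category $d\downarrow\Phi$, and build a double complex whose columns are the complexes $C_\bullet(d\downarrow\Phi)$ of the nerves of these comma-categories, weighted by the values of $\MM$. Concretely, one considers the bisimplicial object whose $(p,q)$-piece assembles an $n$-simplex $(\alpha_1,\dots,\alpha_p)$ of $\NNN\CC$ together with a morphism $d\to\Phi(\mathsf{dom}(\alpha_p))$ in $\DD$ and a $q$-chain of morphisms in $\DD$ ending at $d$; taking $\MM(d)$ as coefficients. Fixing one direction and running the other gives two filtrations. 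In one direction, the fact that each comma-category $d\downarrow\Phi$ is contractible (so that by Proposition \ref{prop_colim_of_constant} the associated column is a resolution concentrated in degree zero computing $\MM(d)$) collapses the bicomplex to $C_\bullet(\DD,\MM)$. In the other direction the same bicomplex collapses to $C_\bullet(\CC,\MM\Phi)$, because summing over all $d$ together with a map $d\to\Phi(c)$ is equivalent, via a cofinality/co-Yoneda argument, to evaluating $\MM\Phi$ at $c$. Comparing the two collapses, both computing the homology of the same bicomplex, yields that $C_\bullet(\Phi)$ is a quasi-isomorphism and hence the isomorphism of derived colimits.

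I expect the main obstacle to be the careful bookkeeping that identifies one edge-collapse of the bicomplex with $C_\bullet(\DD,\MM)$ and the other with $C_\bullet(\CC,\MM\Phi)$, and in particular verifying that the contractibility of each $d\downarrow\Phi$ is exactly what forces the columns to be acyclic resolutions. The key input is Proposition \ref{prop_colim_of_constant}: applied degreewise to the constant coefficient systems $\MM(d)$ on the contractible categories $d\downarrow\Phi$, it guarantees that the vertical homology is $\MM(d)$ in degree zero and vanishes in higher degrees, which is what makes the spectral sequence of the bicomplex degenerate to give the desired comparison. The subtle point is choosing the indexing so that the two edge maps of the bicomplex are genuinely the complexes of the two categories and the comparison map is exactly $C_\bullet(\Phi)$; once this identification is set up cleanly, the argument is a standard double-complex/acyclic-models style collapse.
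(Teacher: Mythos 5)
Your overall route is viable and genuinely different from the paper's. The paper does not compare the chain complexes $C_\bullet(\CC,\MM\Phi)$ and $C_\bullet(\DD,\MM)$ directly: it first uses non-emptiness and connectedness of the categories $d\!\downarrow\!\Phi$ to conclude that $\Phi$ is final, so $\colim\MM\Phi=\colim\MM$ already in degree $0$; it then shows that the exact functor $-\circ\Phi:\AA^\DD\to\AA^\CC$ carries the projective generators $P^{(\DD(d,-))}$ to $\colim$-acyclic functors, via the identification $C_\bullet(\CC,P^{(\DD(d,\Phi(-)))})=C_\bullet(d\!\downarrow\!\Phi,P^{\sf const})$ together with Proposition \ref{prop_colim_of_constant}, and concludes by the spectral sequence of the composite functor. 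Your bisimplicial argument is instead the classical proof of Quillen's Theorem A with coefficients; it avoids both the finality step and the description of projectives in $\AA^\DD$, at the cost of heavier bookkeeping. The two proofs share the same kernel — a column of your bicomplex is the chain complex of a constant functor on a contractible comma category, killed by Proposition \ref{prop_colim_of_constant} — and in fact your bicomplex is what the paper's argument produces if one resolves $\MM$ by the bar-type $\colim$-acyclic resolution $\bigoplus\MM(d_q)^{(\DD(d_0,-))}$ rather than an arbitrary projective resolution.

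The one point where your write-up would fail as literally stated is the indexing, which you yourself flag as the delicate step: placing the coefficient $\MM(d)$ at the object $d$ that carries the structure map $d\to\Phi({\sf dom}(\alpha_p))$ is not compatible with the convention of Proposition \ref{prop_iso_colim_homol}, where coefficients sit at the domain of the deepest arrow of a chain and the last face map acts through $\MM$; with your placement one of the two collapses does not go through. The correct bicomplex is
$$D_{p,q}=\bigoplus\MM(d_q),$$
the sum running over triples consisting of a $p$-chain $c_0\leftarrow\dots\leftarrow c_p$ in $\CC$, a $q$-chain $d_0\leftarrow\dots\leftarrow d_q$ in $\DD$, and a morphism $d_0\to\Phi(c_p)$ from the shallow end of the $\DD$-chain to $\Phi$ of the deep end of the $\CC$-chain. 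For fixed $(c_\bullet)$ the $q$-direction is the complex of the slice $\DD\!\downarrow\!\Phi(c_p)$ with coefficients pulled back from $\MM$, acyclic with $H_0=\MM(\Phi(c_p))$ because the slice has a terminal object; this edge gives $C_\bullet(\CC,\MM\Phi)$. For fixed $(d_\bullet)$ the $p$-direction is $C_\bullet(d_0\!\downarrow\!\Phi,\MM(d_q)^{\sf const})$, which is exactly where the contractibility hypothesis and Proposition \ref{prop_colim_of_constant} enter; this edge gives $C_\bullet(\DD,\MM)$. Finally, note that the zigzag of quasi-isomorphisms through the total complex of $D_{\bullet\bullet}$ a priori yields only \emph{some} isomorphism; if you want it to be the canonical map induced by $C_\bullet(\Phi)$ (which the paper obtains for free as the edge morphism of the composite-functor spectral sequence), you must additionally check that the two augmentations of $D_{\bullet\bullet}$ are compatible with $C_\bullet(\Phi)$.
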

\begin{proof}
Any contractible category is not empty and connected.  Since the category $d \!\downarrow\! \Phi$ is not empty and connected, the functor $\Phi$ is final (see \cite[IX.3]{MaclaneCat}), and hence $\colim\: \MM \Phi=\colim\: \MM$ for any $\MM\in \AA^\DD.$

The functor of composition $- \circ \Phi :\AA^\DD\to \AA^\CC$ is exact. Prove that it sends projective objects to $\colim$-acyclic objects. Projective objects of $\AA^\DD$ are direct  summunds of direct sums of functors of the form $ P^{(\DD(d,-))},$ where $P$ is a projective object of $\AA.$ So we only need to show that the the objects of $\AA^\CC$ of the form $P^{(\DD(d,\Phi(-)))}$ are $\colim$-acyclic. Note that an $n$-simplex of $\NNN(d\!\downarrow \! \Phi)$ is the same as an $n$-simplex $(c_0\leftarrow \dots \leftarrow c_n)$ of $\NNN\CC$ together with a morphism $d\to \Phi(c_n).$ Therefore, we obtain an isomorphism
$$ \bigoplus_{(c_0\leftarrow \dots \leftarrow c_n)\in (\NNN\CC)_n} P^{\DD(d,\Phi(-))} = \bigoplus_{(\alpha_0\leftarrow \dots \leftarrow \alpha_n)\in (\NNN(d\downarrow \Phi))_n} P.  $$
It is easy to check that the isomorphism is compatible with face maps, and hence, we obtain
$$C_\bullet(\CC,P^{\DD(d,\Phi(-))})=C_\bullet(d\!\downarrow \! \Phi,P).$$ It follows that
$$\colim_*\ P^{(\DD(d,\Phi(-)))}= \underset{d \downarrow  \Phi}{\colim_*}\ P^{\sf const}.$$
 Since $d\!\downarrow\! \Phi $ is contractible Proposition \ref{prop_colim_of_constant} implies that $P^{(\DD(d,\Phi(-)))}$ is $\colim$-acyclic.

Then the spectral sequence of composition together with the fact that $- \circ \Phi :\AA^\DD\to \AA^\CC$ is exact imply the isomorphism $ {\colim_i}\: \MM \cong {\colim_i}\: \MM\Phi.$
\end{proof}

\begin{Proposition}\label{proposition_subcategory_retract} Let $\CC$ be a full subcategory of a small category $\DD$ such that any object of $\DD$ is a retract of an object of $\CC.$ Then for any functor $\MM:\DD\to \AA$ there is an isomorphism
$$\colim_n\ \MM|_{\CC} \cong \colim_n\ \MM$$
for any $n.$
\end{Proposition}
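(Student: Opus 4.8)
The plan is to reduce this statement to the previously established Proposition~\ref{prop_preserving_of_colimits} by exhibiting a functor whose comma-categories are contractible. Concretely, let $\Phi:\CC\hookrightarrow\DD$ be the inclusion of the full subcategory. By Proposition~\ref{prop_preserving_of_colimits}, it suffices to show that for every object $d\in\DD$ the comma-category $d\!\downarrow\!\Phi$ is contractible. Since $\Phi$ is a full inclusion, the objects of $d\!\downarrow\!\Phi$ are pairs $(c,\alpha\colon d\to c)$ with $c\in\CC$, and morphisms $(c,\alpha)\to(c',\alpha')$ are morphisms $f\in\CC(c,c')$ in $\DD$ with $f\alpha=\alpha'$.

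Now I would use the retraction hypothesis to produce a distinguished object and a contracting system of natural transformations. By assumption $d$ itself is a retract of some object $c_d\in\CC$, so there are morphisms $i_d\colon d\to c_d$ and $p_d\colon c_d\to d$ in $\DD$ with $p_d i_d = 1_d$; the pair $(c_d,i_d)$ is then an object of $d\!\downarrow\!\Phi$. The idea is to build a functor $\Psi\colon(d\!\downarrow\!\Phi)\to(d\!\downarrow\!\Phi)$ that is constant at $(c_d,i_d)$ together with natural transformations connecting it to the identity, so that the criterion recalled just before Proposition~\ref{prop_coproduct} (a zig-zag of natural transformations between the identity and a constant functor forces contractibility of the nerve) applies.

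The key step is the construction of these natural transformations. Given an object $(c,\alpha)$, I would like a canonical morphism in $d\!\downarrow\!\Phi$ relating it to $(c_d,i_d)$. The natural candidate is the composite $c_d\xrightarrow{p_d}d\xrightarrow{\alpha}c$, which is a morphism in $\CC$ (both $c_d,c\in\CC$ and $\CC$ is full in $\DD$); it sends the structure map $i_d$ of $(c_d,i_d)$ to $\alpha p_d i_d=\alpha$, hence is a morphism $(c_d,i_d)\to(c,\alpha)$ in the comma-category. This assignment is natural in $(c,\alpha)$: for $f\colon(c,\alpha)\to(c',\alpha')$ one has $f\circ(\alpha p_d)=(f\alpha)p_d=\alpha' p_d$, so $f$ commutes with the two structure morphisms. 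This exhibits a natural transformation from the constant functor at $(c_d,i_d)$ to the identity functor $\mathsf{Id}_{d\downarrow\Phi}$, which is exactly a zig-zag of the required form (of length one), whence $d\!\downarrow\!\Phi$ is contractible.

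The main obstacle to watch is the verification that $\alpha p_d$ is genuinely a morphism of the comma-category and that naturality is strict, both of which hinge crucially on fullness of $\CC$ in $\DD$ (to guarantee $\alpha p_d\in\CC(c_d,c)$) and on the splitting identity $p_d i_d=1_d$ (to guarantee the structure maps are respected). Once contractibility of all comma-categories is in hand, Proposition~\ref{prop_preserving_of_colimits} immediately yields $\colim_n\,\MM\Phi\cong\colim_n\,\MM$, i.e. $\colim_n\,\MM|_\CC\cong\colim_n\,\MM$ for all $n$, completing the proof.
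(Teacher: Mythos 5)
Your proof is correct and follows essentially the same route as the paper: both reduce to Proposition \ref{prop_preserving_of_colimits} and establish contractibility of $d\!\downarrow\!\CC$ by choosing a retract $c_d\in\CC$ of $d$ and building the natural transformation from the constant functor at $(c_d,i_d)$ to the identity via $\alpha p_d$. Your explicit attention to fullness (guaranteeing $\alpha p_d$ lies in $\CC$) is a detail the paper leaves implicit, but the argument is the same.
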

\begin{proof}
Proposition \ref{prop_preserving_of_colimits} implies that we only need to prove that for any fixed object $d_0\in \DD$ the category $d_0 \downarrow \CC$ is contractible. Since $d_0$ is a retract of an object of $\CC,$ we can fix an object $c_0\in \CC$ together with morphisms $r_0:c_0\to d_0$ and $s_0:d_0\to c_0$ such that $r_0s_0={\sf id}.$ Then for any object $\alpha:d_0\to c$ of $d_0\downarrow \CC$ we obtain a commutative diagram
$$
\begin{tikzcd}
&d_0 \arrow[dl,"s_0"'] \arrow[dr,"\alpha"] &\\
c_0\arrow[rr,"\alpha r_0"] && c
\end{tikzcd}
$$
which is natural by $\alpha.$ Therefore, we constructed a natural transformation $(d_0)^{\sf const}\to {\sf Id}_{d_0\downarrow \CC },$ and hence, $d_0\downarrow \CC$ is contractible.
\end{proof}

\begin{Proposition}\label{prop_double_colimits} Let $\CC$ be a small category with pairwise coproducts, $\Phi:\CC\times \CC \to \AA$ be a functor and $\Delta:\CC\to \CC\times \CC$ be the diagonal. Then
$${\colim_n}\ \Phi \Delta\cong {\colim_n}\ \Phi$$
for any $n.$
\end{Proposition}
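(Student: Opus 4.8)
The plan is to apply Proposition \ref{prop_preserving_of_colimits} to the diagonal functor $\Delta:\CC\to \CC\times\CC$. By that proposition it suffices to prove that the comma-category $(d_1,d_2)\!\downarrow\!\Delta$ is contractible for every object $(d_1,d_2)\in \CC\times\CC$, and then the conclusion ${\colim_n}\ \Phi\Delta\cong {\colim_n}\ \Phi$ follows immediately (taking $\DD=\CC\times\CC$ and $\MM=\Phi$).

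First I would unwind the definition of this comma-category. A morphism $(d_1,d_2)\to \Delta(c)=(c,c)$ in $\CC\times\CC$ is a pair $(\alpha_1\colon d_1\to c,\ \alpha_2\colon d_2\to c)$, so the objects of $(d_1,d_2)\!\downarrow\!\Delta$ are triples $(c,\alpha_1,\alpha_2)$, i.e.\ cospans $d_1\xrightarrow{\alpha_1} c\xleftarrow{\alpha_2} d_2$. A morphism from $(c,\alpha_1,\alpha_2)$ to $(c',\alpha_1',\alpha_2')$ is a morphism $f\colon c\to c'$ in $\CC$ satisfying $f\alpha_1=\alpha_1'$ and $f\alpha_2=\alpha_2'$.

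Next, using the hypothesis that $\CC$ has pairwise coproducts, I would form $d_1\sqcup d_2$ together with its structure maps $\iota_1\colon d_1\to d_1\sqcup d_2$ and $\iota_2\colon d_2\to d_1\sqcup d_2$, and claim that $(d_1\sqcup d_2,\iota_1,\iota_2)$ is an \emph{initial} object of $(d_1,d_2)\!\downarrow\!\Delta$. Indeed, for any object $(c,\alpha_1,\alpha_2)$ the universal property of the coproduct produces a unique morphism $[\alpha_1,\alpha_2]\colon d_1\sqcup d_2\to c$ with $[\alpha_1,\alpha_2]\iota_1=\alpha_1$ and $[\alpha_1,\alpha_2]\iota_2=\alpha_2$, which is precisely the unique morphism of the comma-category from $(d_1\sqcup d_2,\iota_1,\iota_2)$ to $(c,\alpha_1,\alpha_2)$.

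Finally, any category with an initial object $i$ is contractible: the unique morphisms $i\to c$ assemble into a natural transformation $i^{\sf const}\to {\sf Id}$ from the constant functor to the identity, so by the remark preceding Proposition \ref{prop_coproduct} the nerve is contractible. Hence $(d_1,d_2)\!\downarrow\!\Delta$ is contractible, and Proposition \ref{prop_preserving_of_colimits} gives the desired isomorphism. The argument is entirely formal; the only place the hypothesis enters is the existence of $d_1\sqcup d_2$, and I do not anticipate any genuine obstacle beyond checking that the initial-object structure is the one dictated by the coproduct.
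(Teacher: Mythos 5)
Your proposal is correct and follows exactly the paper's argument: the paper's proof likewise observes that $(c_1,c_2)\to(c_1\sqcup c_2,c_1\sqcup c_2)$ is an initial object of $(c_1,c_2)\downarrow\Delta$, hence that category is contractible, and then invokes Proposition \ref{prop_preserving_of_colimits}. You have merely spelled out the verification of initiality and of the contractibility of a category with an initial object, which the paper leaves implicit.
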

\begin{proof}
The object $(c_1,c_2)\to (c_1\sqcup c_2,c_1\sqcup c_2)$ is the initial object of the category $(c_1,c_2)\downarrow \Delta.$ Then the category $(c_1,c_2)\downarrow \Delta$ is nonempty and contractible. Proposition  \ref{prop_preserving_of_colimits} implies the assertion.
\end{proof}

\begin{Proposition}[K\"unneth theorem for derived colimits] \label{prop_Kunneth_colim} Let $\CC$ be a small category with pairwise coproducts, $R$ be a principal ideal domain and $\Phi,\Psi:\CC\to {\sf Mod}(R)$ be functors. Assume that $\Psi(c)$ is flat over $R$ for any $c.$  Denote by  $\Phi\otimes_R\Psi $ the functor $\CC\to {\sf Mod}(R)$ that sends $c$ to $\Phi(c)\otimes_R \Psi(c).$ Then there is a short exact sequence of $R$-modules
$$ \bigoplus_{i+j=n} (\colim_i\: \Phi) \otimes_R (\colim_j \Psi) \mono \colim_n\ (\Phi \otimes_R \Psi) \epi
\bigoplus_{i+j=n-1} {\sf Tor}_1^R(\colim_i\: \Phi, \colim_j\: \Psi).$$
\end{Proposition}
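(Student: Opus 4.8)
The plan is to reduce the statement to the classical algebraic Künneth theorem over a principal ideal domain, by realizing all the derived colimits as homology of the chain complexes $C_\bullet(\CC,-)$ of Proposition \ref{prop_iso_colim_homol}, and then using Proposition \ref{prop_double_colimits} to pass from the product category $\CC\times\CC$ back to the diagonal. First I would introduce the external tensor product $\Phi\boxtimes\Psi\colon \CC\times\CC\to\Mod(R)$, $(c,c')\mapsto \Phi(c)\otimes_R\Psi(c')$, so that $(\Phi\boxtimes\Psi)\circ\Delta=\Phi\otimes_R\Psi$ for the diagonal $\Delta\colon\CC\to\CC\times\CC$. Since $\CC$ has pairwise coproducts, Proposition \ref{prop_double_colimits} gives $\colim_n(\Phi\otimes_R\Psi)\cong\colim_n(\Phi\boxtimes\Psi)$, and it therefore suffices to compute the colimits of $\Phi\boxtimes\Psi$ over $\CC\times\CC$.

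The central computation is to identify these with the homology of $C_\bullet(\CC,\Phi)\otimes_R C_\bullet(\CC,\Psi)$. I would consider the bisimplicial $R$-module $B_{\bullet\bullet}$ with $B_{p,q}=C_p(\CC,\Phi)\otimes_R C_q(\CC,\Psi)$, the horizontal and vertical simplicial structures coming from the two factors. Because the nerve of a product is the product of nerves, and the domain of an $n$-simplex $\big((\alpha_1,\dots,\alpha_n),(\beta_1,\dots,\beta_n)\big)$ of $\NNN(\CC\times\CC)$ is the pair $({\sf dom}(\alpha_n),{\sf dom}(\beta_n))$, one checks directly that the diagonal simplicial module $n\mapsto B_{n,n}$ is isomorphic, compatibly with all faces and degeneracies, to $C_\bullet(\CC\times\CC,\Phi\boxtimes\Psi)$. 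By Proposition \ref{prop_iso_colim_homol} its homology is $\colim_n(\Phi\boxtimes\Psi)$. On the other hand, by the Eilenberg--Zilber theorem the diagonal complex of a bisimplicial module is chain homotopy equivalent to the total complex of its associated bicomplex, which is precisely the tensor product of complexes $C_\bullet(\CC,\Phi)\otimes_R C_\bullet(\CC,\Psi)$. Hence $\colim_n(\Phi\boxtimes\Psi)\cong H_n\big(C_\bullet(\CC,\Phi)\otimes_R C_\bullet(\CC,\Psi)\big)$.

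Finally I would apply the classical Künneth theorem. The complex $C_\bullet(\CC,\Psi)$ consists in each degree of a direct sum of the flat modules $\Psi(c)$, hence is a complex of flat $R$-modules. Over a PID flatness coincides with torsion-freeness, and submodules of torsion-free modules are torsion-free, so the cycles and boundaries of $C_\bullet(\CC,\Psi)$ are flat as well; this is exactly the hypothesis needed for the algebraic Künneth short exact sequence to hold for the tensor product of $C_\bullet(\CC,\Phi)$ with the flat complex $C_\bullet(\CC,\Psi)$. Combining that sequence with $H_i(C_\bullet(\CC,\Phi))=\colim_i\Phi$ and $H_j(C_\bullet(\CC,\Psi))=\colim_j\Psi$ (Proposition \ref{prop_iso_colim_homol}) and the identification of the middle term above yields the asserted short exact sequence; its naturality comes from the naturality of the Eilenberg--Zilber equivalence and of the Künneth sequence.

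The step I expect to be the main obstacle is the bookkeeping in the second paragraph: verifying that the diagonal of $B_{\bullet\bullet}$ genuinely coincides with $C_\bullet(\CC\times\CC,\Phi\boxtimes\Psi)$ as simplicial objects, in particular that the only face acting nontrivially on the module values, namely $d_n$, matches $\Phi(\alpha_n)\otimes_R\Psi(\beta_n)$, and that the Eilenberg--Zilber equivalence is applied with $R$-coefficients rather than merely over $\ZZ$. Once this identification is in place, the remainder is a direct appeal to standard homological algebra over a PID.
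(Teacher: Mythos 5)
Your proposal is correct and follows essentially the same route as the paper: the paper likewise passes to the external tensor product on $\CC\times\CC$, reduces via Proposition \ref{prop_double_colimits}, identifies $C_\bullet(\CC\times\CC,\Phi\tilde\otimes_R\Psi)$ with the level-wise (diagonal) tensor product of $C_\bullet(\CC,\Phi)$ and $C_\bullet(\CC,\Psi)$, and then invokes Eilenberg--Zilber together with the classical K\"unneth theorem for complexes of flat modules over a PID. The bookkeeping you flag as the main obstacle is exactly the identity the paper asserts, so no gap remains.
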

\begin{proof} If $A_\bullet$ is a simplicial $R$-module, we denote by $CA_\bullet$ the corresponding complex with the differential $\delta_n=\sum (-1)^id_i$ and  set $H_n(A_\bullet)=H_n(CA_\bullet).$
Recall that if $A_{\bullet}, B_\bullet$ are two simplicial $R$-modules, and $A_{\bullet}\otimes_R^{\sf simpl} B_\bullet$ is their tensor product which is defined level-wise $(A_{\bullet}\otimes_R^{\sf simpl} B_\bullet)_n = A_{n}\otimes_R B_n,$ then there is a natural isomorphism
$$ H_*(A_{\bullet}\otimes_R^{\sf simpl} B_\bullet)\cong  H_*(CA_{\bullet}\otimes_R CB_\bullet), $$
where $CA_{\bullet}\otimes_R CB_\bullet$ is the total tensor product (see \cite[8.5.3]{Weibel}). The K\"uneth theorem for complexes \cite[3.6.3]{Weibel} implies that, if $B_n$ is $R$-flat for any $n,$ then  there is a short exact sequence
$$\bigoplus_{i+j=n} H_i(A_\bullet)\otimes_R H_j(B_\bullet) \mono H_n(A_{\bullet}\otimes_R^{\sf simpl} B_\bullet)\epi \bigoplus_{i+j=n-1} {\sf Tor}_1^R( H_i(A_\bullet), H_j(B_\bullet)).$$
Then we only need to note that
$$C_\bullet(\CC,\Phi)\otimes^{\sf simpl}_R C_\bullet(\CC,\Psi)=C_\bullet(\CC\times \CC,\Phi\tilde \otimes_R\Psi),$$
where $\Phi\tilde \otimes_R\Psi $ is the functor $\CC\times \CC\to {\sf Mod}(R)$ that sends $(c_1,c_2)$ to $ \Phi(c_1) \otimes_R\Psi(c_2),$ and combine this with the identity $\Phi\otimes_R\Psi=(\Phi\tilde \otimes_R\Psi)\Delta$ and Proposition \ref{prop_double_colimits}.
\end{proof}

\begin{Definition}[Strongly connected] A category $\CC$ is called strongly connected, if $\CC(c,c')\ne \emptyset$ (and $\CC(c',c)\ne \emptyset$) for any two objects $c,c'\in \CC$ .
\end{Definition}

\begin{Proposition}\label{colimitintuitive} If $\CC$ is a strongly connected small category, then for any $\MM:\CC\to \AA$ the map
$$i_{c_0}:\MM(c_0)\epi \colim\: \MM$$ is an epimorphism for any $c_0\in \CC$. Moreover, $\colim\: \MM$ is the largest constant quotient of $\MM.$ To be more precise, if we fix $c_0,$ for any two morphisms $\alpha,\beta:c\to c_0$ we have $i_{c_0}\MM(\alpha)=i_{c_0}\MM(\beta)$ and $i_{c_0}$ is initial among those. (Roughly speaking, $\colim\: \MM$ is the coequaliser of all morphisms $\MM(\alpha):\MM(c)\to \MM(c_0)$.)
\end{Proposition}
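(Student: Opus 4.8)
The plan is to work directly with the universal cocone $\{i_c:\MM(c)\to \colim\MM\}_{c\in\CC}$, using only the cocone identities $i_{c'}\MM(f)=i_c$ for $f\in\CC(c,c')$ together with the uniqueness part of the universal property of the colimit; the explicit complex $C_\bullet(\CC,\MM)$ is not needed here. Since $\CC$ is strongly connected, for every object $c$ I can fix a morphism $\gamma_c\in\CC(c,c_0)$, choosing $\gamma_{c_0}=\mathrm{id}$.

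First I would prove that $i_{c_0}$ is an epimorphism. Given $u,v:\colim\MM\to W$ with $u\,i_{c_0}=v\,i_{c_0}$, the cocone identity $i_c=i_{c_0}\MM(\gamma_c)$ gives $u\,i_c=u\,i_{c_0}\MM(\gamma_c)=v\,i_{c_0}\MM(\gamma_c)=v\,i_c$ for every $c$, and then the uniqueness clause of the universal property of $\colim\MM$ forces $u=v$; hence $i_{c_0}$ is right-cancellable, i.e.\ epi. The coequalizing property is then immediate: for any parallel pair $\alpha,\beta:c\to c_0$ the cocone identities give $i_{c_0}\MM(\alpha)=i_c=i_{c_0}\MM(\beta)$.

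The substantive part is the initiality statement. Given any $g:\MM(c_0)\to Z$ with $g\MM(\alpha)=g\MM(\beta)$ for every parallel pair $\alpha,\beta:c\to c_0$ and every $c$, I would build a cocone on $\MM$ with vertex $Z$ by setting $j_c:=g\MM(\gamma_c)$. The hypothesis on $g$ shows simultaneously that $j_c$ is independent of the choice of $\gamma_c$ (any two morphisms $c\to c_0$ are identified by $g$) and that $\{j_c\}$ is a cocone: for $f:c\to c'$ one has $j_{c'}\MM(f)=g\MM(\gamma_{c'}f)=g\MM(\gamma_c)=j_c$, since $\gamma_{c'}f$ and $\gamma_c$ are both morphisms $c\to c_0$. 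The universal property then produces a unique $h:\colim\MM\to Z$ with $h\,i_c=j_c$; in particular $h\,i_{c_0}=g$, and $h$ is the unique map with this property because $i_{c_0}$ is epi. This exhibits $(\colim\MM,i_{c_0})$ as initial among maps out of $\MM(c_0)$ that coequalize all the $\MM(\alpha)$, which is the asserted coequalizer description. To read off the \emph{largest constant quotient} formulation, I would observe that a constant quotient of $\MM$ is precisely a levelwise-epi cocone into a constant functor; applying the epimorphism argument above at each object shows the universal cocone is itself such a quotient, and the universal property shows every other constant quotient factors through it.

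The main obstacle I anticipate is the well-definedness and naturality of the cocone $\{j_c\}$ in the last step: everything hinges on $g$ identifying all parallel morphisms into $c_0$, and on strong connectivity guaranteeing $\CC(c,c_0)\ne\emptyset$ for every $c$ so that the $\gamma_c$ exist at all. Once that is in place the remainder of the argument is purely formal.
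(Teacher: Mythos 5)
Your proof is correct, but it takes a different route from the paper's. The paper reformulates the statement as an isomorphism $\colim\,\MM\cong\colim\,\MM_0$, where $\MM_0$ is the restriction of $\MM$ to the wide subcategory $\CC_0\subseteq\CC$ having the same objects but only the morphisms with codomain $c_0$; it then proves the inclusion $\CC_0\hookrightarrow\CC$ is a final functor by checking that each comma category $c_1\downarrow\CC_0$ is nonempty and connected (connecting a given $\alpha:c_1\to c$ to $\mathrm{id}_{c_1}$ through a chosen $\beta:c\to c_0$), and invokes the final-functor theorem from Mac Lane. Once finality is established, the coequalizer description is immediate, since the colimit over $\CC_0$ is by construction the coequalizer of all the maps $\MM(\alpha):\MM(c)\to\MM(c_0)$. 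You instead verify the universal property directly: the epi-ness of $i_{c_0}$ from the cocone identities $i_c=i_{c_0}\MM(\gamma_c)$, and the initiality by assembling the cocone $j_c=g\MM(\gamma_c)$, whose well-definedness and naturality follow exactly from $g$ identifying parallel morphisms into $c_0$. The underlying combinatorial input is the same in both arguments (strong connectivity supplies the $\gamma_c$, and any two routes $c\to c_0$ get identified), but your version is self-contained and avoids the cofinality machinery, at the cost of a slightly longer hands-on verification; the paper's version is shorter modulo the cited theorem and makes the ``coequalizer of all maps into $\MM(c_0)$'' reading structurally transparent. Both are complete proofs.
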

\begin{proof}
Consider a subcategory $\CC_0$ of $\CC$ with the same objects but with morphisms only to $c_0$ i.e. ${\sf ob}(\CC_0)={\sf ob}(\CC),$ $\CC_0(c,c')=\emptyset,$ if $c'\ne c_0,$ and $\CC_0(c,c_0)=\CC(c,c_0).$ Denote by $\MM_0:\CC_0\to \AA$ the restriction of $\MM.$ Then the assertion can be reformulated as follows
$$\colim\ \MM=\colim\ \MM_0.$$
In order to prove this isomorphism, it is enough to prove that the embedding $\CC_0\to \CC$ is a final functor (see \cite[IX.3]{MaclaneCat}). Fix an object $c_1\in \CC$ and and prove that the category $c_1\! \downarrow\! \CC_0$ is not empty and connected. It is not empty because it contains the object ${\sf id}_{c_1}:c_1\to c_1.$ It is connected because for any its object $\alpha:c_1\to c$ we can chose $\beta\in \CC(c, c_0)$ and construct two morphisms in $c_1\! \downarrow\! \CC_0 $
$$ \begin{tikzcd}
 & c_1\arrow[dl,"{\sf id}"'] \arrow[d,"\beta\alpha"] \arrow[dr,"\alpha"] & \\
c_1 \arrow[r,"\beta\alpha"'] & c_0 & c \arrow[l,"\beta"]
 \end{tikzcd} $$
that connect the object $\alpha$ with the fixed object ${\sf id}_{c_1}.$
\end{proof}

\section{\bf Simplicial resolutions and derived colimits over the category of presentations}

Let $\CC$ be a small category with finite limits and colimits. Recall that a morphism $\alpha:c\to c'$ is called effective epimorphism, if it is a coequalizer of two arrows from the corresponding pullback of $\alpha$ with itself:
$$ \alpha={\rm coeq}( c\times_{c'} c \rightrightarrows c ).$$
An object $p$ of $\CC$ is called projective, if the hom-functor $\CC(p,-)$ sends effective epimorphisms to surjections. A {\it presentation} of an object $c\in \CC$ is an effective epimorphism $p\epi c$ from a projective object.  The category $\CC$ is said to have  sufficiently many projectives if any object has a presentation. Under this assumption a morphism $\alpha:c\to c'$ is an effective epimorphism if and only if $\CC(p,\alpha):\CC(p,c)\to \CC(p,c')$ is surjective for any projective $p$ \cite[II \S 4 Prop 2.]{QuillenH}.

{\bf Assumptions.} Further in the section we assume that $\CC$ is a small category with finite limits and colimits and with enough of projectives. Moreover, we denote by $\AA$ an abelian category with exact small direct sums  and enough of projectives. The main examples for us are $\AA={\sf Mod}(\Lambda)$ and $\AA={\sf Mod}(\Lambda)^{\sf op}$  for some ring $\Lambda$ (direct summs and projectives in ${\sf Mod}(\Lambda)^{\sf op} $ are direct products and injectives in ${\sf Mod}(\Lambda)$).

\begin{Remark}[About set theoretical assumptions]  Here we just assume that $\CC$ is small. Further we will need to use the Tarski-Grothendieck set theory. We will fix two universes $U\in U'$ and consider the category $\CC$ of $U$-small algebraic objects of some kind. For example, the category of $U$-small groups.  Then $\CC$ will be $U'$-small, and we need to consider an abelian category $\AA$ with exact $U'$-small direct sums. For example, the category of $U'$-small modules over some ring.
\end{Remark}

For an object $c\in \CC$ we consider three categories: the first $\CC\!\downarrow\! c$ is the category of objects over $c;$ the second is its full subcategory ${\sf Proj} \!\downarrow\! c$ of projective objects over $c;$ and the most important is the category presentations ${\sf Pres}(c).$ The category ${\sf Pres}(c)$ is the full subcategory of ${\sf Proj} \downarrow c$ whose objects are presentations $p\epi c.$ 
One of advantages of the category ${\sf Pres}(c)$ is that it is strongly connected:

\begin{Lemma} The category ${\sf Pres}(c)$ is strongly connected and has pairwise coproducts. In particular, by Proposition \ref{prop_coproduct} it is contractible.
\end{Lemma}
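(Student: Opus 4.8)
The plan is to verify the two stated properties in turn and then deduce contractibility from the already-proved Proposition \ref{prop_coproduct}.

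First I would establish strong connectivity. Fix two presentations $\pi\colon p\epi c$ and $\pi'\colon p'\epi c$. A morphism $(p,\pi)\to(p',\pi')$ in ${\sf Pres}(c)$ is a map $f\colon p\to p'$ in $\CC$ with $\pi'f=\pi$. Since $p$ is projective and $\pi'$ is an effective epimorphism, the map $\CC(p,\pi')\colon \CC(p,p')\to \CC(p,c)$ is surjective, so the element $\pi\in\CC(p,c)$ admits a preimage $f$; this is the desired morphism. Swapping the roles of the two presentations (both domains are projective and both structure maps are effective epimorphisms) produces a morphism in the opposite direction. Hence ${\sf Pres}(c)(x,y)\ne\emptyset$ for every ordered pair of objects, which is exactly strong connectivity.

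Next I would construct pairwise coproducts. Given $\pi\colon p\epi c$ and $\pi'\colon p'\epi c$, I take the coproduct $p\sqcup p'$ in $\CC$ (which exists since $\CC$ has finite colimits) together with the induced map $[\pi,\pi']\colon p\sqcup p'\to c$. I claim $(p\sqcup p',[\pi,\pi'])$ is a presentation. Projectivity of $p\sqcup p'$ follows from the natural identification $\CC(p\sqcup p',-)\cong\CC(p,-)\times\CC(p',-)$: an effective epimorphism is sent to a product of surjections, hence to a surjection. That $[\pi,\pi']$ is an effective epimorphism I would check via the criterion that $\CC(q,[\pi,\pi'])$ is surjective for every projective $q$; indeed any $h\colon q\to c$ lifts through the effective epimorphism $\pi$ to some $\tilde h\colon q\to p$, and post-composing $\tilde h$ with the coproduct inclusion $p\to p\sqcup p'$ yields a preimage of $h$. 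Thus $(p\sqcup p',[\pi,\pi'])$ lies in ${\sf Pres}(c)$. Since coproducts in the slice category $\CC\!\downarrow\! c$ are computed by the coproduct in $\CC$ equipped with the induced map, and ${\sf Pres}(c)$ is a full subcategory containing this object, its universal property restricts to give a coproduct inside ${\sf Pres}(c)$.

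The one point requiring genuine care is the verification that the induced map $[\pi,\pi']$ is again an effective epimorphism; this is where the characterization of effective epimorphisms through liftings of maps out of projectives (the cited \cite[II \S 4 Prop 2.]{QuillenH}) does the work, reducing the claim to the lifting of $\pi$ that is already available from projectivity. Once both properties are in hand, the final clause is immediate: the existence of all pairwise coproducts guarantees in particular that $c\sqcup c_0$ exists for a fixed object $c_0$ and every object $c$, so Proposition \ref{prop_coproduct} yields contractibility.
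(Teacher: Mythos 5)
Your proposal is correct and follows essentially the same route as the paper: strong connectivity via lifting one presentation through the other (projectivity of the domain plus the effective-epimorphism surjectivity criterion), and the coproduct realized as $p\sqcup p'$ with the induced map to $c$. You simply spell out the two points the paper leaves implicit, namely that $p\sqcup p'$ is projective and that the induced map is again an effective epimorphism.
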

\begin{proof}
Let $\sigma:p \epi c$ and $\tau : q\epi c$ be two objects of ${\sf Pres}(c).$  Since $\tau$ is an effective epimorphism, the map $\CC(p, \tau): \CC(p, q)\to \CC(p,c)$ is surjective. A preimage $\alpha\in \CC(p, q)$ of $\sigma$ defines a morphism from $\sigma$ to $\tau$ in the category ${\sf Pres}(c).$ Then it is strongly connected. Moreover, note that $p\sqcup q$ is projective as well, and the map $(\sigma,\tau):p\sqcup q \epi c$ is the coproduct in this category.
\end{proof}

\begin{Definition}[Simplicial projective resolution]\label{def_simpl_proj_res} We define a {\it simplicial projective resolution} of an object $c$ as a simplicial object $p_\bullet \in \CC^{\Delta^{\rm op}},$ whose components $p_n$ are projective,  together with a morphism to the constant simplicial object $\varepsilon: p_\bullet\to c^{\sf const}$ such that:
\begin{enumerate}
\item for any projective object $p'$  the morphism $\CC(p',\varepsilon_\bullet):\CC(p',p_\bullet)\to \CC(p',c^{\rm const}) $ is an trivial Kan fibration of simplicial sets,
\item $\varepsilon_0:p_0\to c $ is a presentation.
\end{enumerate}
\end{Definition}

Note that if $\varepsilon_\bullet : p_\bullet \to c^{\sf const}$ is a simplicial projective resolution, then $\varepsilon_n= \varepsilon_0d_0^n,$ $d_0^n:p_n\to p_0$ is a split epimorphism, and hence  $\varepsilon_n:p_n\to c$ is a presentation as well. Therefore, the simplicial projective resolution defines a simplicial object in the category of presentations that we denote by
$$ \varepsilon_\bullet^{\sf P} \in {\sf Pres}(c)^{\Delta^{\rm op}}.$$

As usual, we threat a simplicial object $a_\bullet$ of an abelian category $\mathcal A$  as a complex  with the differential $\delta_n=\sum (-1)^i d_i$  and denote
$$\pi_n(a_\bullet)=H_n(a_\bullet).$$ 

For a morphism $\alpha:c\to c'$ we denote by 
$$\tilde \alpha : {\sf Proj}\! \downarrow\! c \longrightarrow {\sf Proj}\! \downarrow\! c'$$
the corresponding functor of composition with $\alpha$. 

\begin{Proposition}\label{prop_quillen_derived} Let  $ \varepsilon_\bullet : p_\bullet \to c^{\sf const}$ be a simplicial projective resolution of an object $c,$ $\varepsilon^{\sf P}_\bullet$ be the corresponding simpicial presentation and
$$\MM:{\sf Proj}\! \downarrow\! c \to \mathcal A$$ be a functor. Then for any $n$  there is an isomorphism
$$\underset{{\sf Proj} \downarrow c}{\colim_n}\: \MM= \pi_n(\MM(\varepsilon^{\sf P}_\bullet)).$$
\end{Proposition}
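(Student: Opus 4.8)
The plan is to realize the simplicial presentation $\varepsilon^{\sf P}_\bullet$ as a single functor $\Phi:\Delta^{\rm op}\to {\sf Proj}\!\downarrow\! c$, $\Phi([n])=\varepsilon^{\sf P}_n=(p_n\epi c)$, and to apply the colimit-version of Quillen's Theorem A (Proposition \ref{prop_preserving_of_colimits}) to $\Phi$. First I would recall the classical fact that for any simplicial object $X_\bullet$ of $\AA$ the derived colimit over $\Delta^{\rm op}$ is computed by its Moore complex, i.e. $\colim^{\Delta^{\rm op}}_n X_\bullet\cong \pi_n(X_\bullet)$; this follows by comparing the bar complex $C_\bullet(\Delta^{\rm op},X_\bullet)$ of Proposition \ref{prop_iso_colim_homol} with the (normalized) Moore complex via the usual extra-degeneracy argument. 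Applying this to $X_\bullet=\MM\Phi=\MM(\varepsilon^{\sf P}_\bullet)$ gives $\colim^{\Delta^{\rm op}}_n \MM\Phi=\pi_n(\MM(\varepsilon^{\sf P}_\bullet))$, so the statement reduces to the isomorphism $\colim_n \MM\Phi\cong \colim_n\MM$, which by Proposition \ref{prop_preserving_of_colimits} will follow once I prove that the comma category $d\!\downarrow\!\Phi$ is contractible for every $d\in{\sf Proj}\!\downarrow\! c$.

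Fix $d=(q\to c)$ with $q$ projective, and write $\bar d\in\CC(q,c)$ for its structure map. An object of $d\!\downarrow\!\Phi$ is a pair $([n],\alpha)$ with $\alpha:d\to\varepsilon^{\sf P}_n$ a morphism of ${\sf Proj}\!\downarrow\! c$, that is, a map $q\to p_n$ over $c$; such a map is exactly an $n$-simplex of the fibre $K$ of $\CC(q,\varepsilon_\bullet):\CC(q,p_\bullet)\to\CC(q,c^{\sf const})$ over the vertex $\bar d$. Tracking the structure maps, $d\!\downarrow\!\Phi$ is precisely the (opposite of the) category of simplices $\Delta\!\downarrow\! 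K$ of this simplicial set $K$. Since $q$ is projective, the defining property of a simplicial projective resolution guarantees that $\CC(q,\varepsilon_\bullet)$ is a trivial Kan fibration; as its base $\CC(q,c^{\sf const})$ is discrete, the fibre $K$ over $\bar d$ is a contractible (Kan) complex.

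It then remains to pass from contractibility of $K$ to contractibility of $d\!\downarrow\!\Phi$. For this I would invoke the classical weak equivalence between a simplicial set and the nerve of its category of simplices, $N(\Delta\!\downarrow\! K)\simeq |K|$ (via the last-vertex map), together with the fact that a category and its opposite have homeomorphic nerves. Hence $N(d\!\downarrow\!\Phi)\simeq |K|\simeq *$, so $d\!\downarrow\!\Phi$ is contractible and Proposition \ref{prop_preserving_of_colimits} applies, completing the argument.

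The main obstacle is the geometric identification in the second and third paragraphs: one must verify carefully that the resolution axiom (phrased through trivial Kan fibrations out of projectives) really yields a \emph{contractible} fibre over the chosen vertex, and that the comma category is genuinely the category of simplices of that fibre, so that the equivalence $N(\Delta\!\downarrow\! K)\simeq K$ can legitimately be applied. By contrast, the reduction in the first paragraph is formal once the identity $\colim^{\Delta^{\rm op}}_n=\pi_n$ is granted.
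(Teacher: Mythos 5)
Your argument is correct, but it follows a genuinely different route from the paper's. The paper proves this proposition by building a double complex $D_{nm}=C_m({\sf Proj}\downarrow p_n,\MM\tilde\varepsilon_n)$ whose two spectral sequences degenerate to $\pi_*(\MM(\varepsilon^{\sf P}_\bullet))$ and to $H_*(C_\bullet({\sf Proj}\downarrow c,\MM))=\colim_*\MM$ respectively; the degeneration of the horizontal one rests on showing that the simplicial hom-set $({\sf Proj}\downarrow c)(\psi,\varepsilon_\bullet)$ is a contractible Kan complex, being the fibre of the trivial Kan fibration $\CC(p^{(0)},p_\bullet)\to\CC(p^{(0)},c^{\sf const})$. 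You instead regard $\varepsilon^{\sf P}_\bullet$ as a functor $\Phi:\Delta^{\rm op}\to{\sf Proj}\downarrow c$ and feed it to Proposition \ref{prop_preserving_of_colimits}, correctly identifying $d\downarrow\Phi$ with the opposite of the category of simplices of exactly that same fibre $K$; so both proofs hinge on the identical key input (contractibility of the fibres, supplied by the resolution axiom), but you outsource the bookkeeping to two classical facts the paper does not invoke: that $\colim_n$ over $\Delta^{\rm op}$ of a simplicial object is $\pi_n$ of its Moore complex, and that the last-vertex map $\NNN(\Delta\downarrow K)\to K$ is a weak equivalence. Both are standard and citable, so your proof is valid --- though for the first fact the clean justification in this generality is not really an extra-degeneracy argument but effaceability: $X_\bullet\mapsto H_n(CX_\bullet)$ is a $\delta$-functor agreeing with $\colim$ in degree $0$ and vanishing in positive degrees on the projectives $P^{(\Delta(-,[m]))}=P\otimes\ZZ[\Delta^m]$, hence computes the left derived functors. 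Your approach is more modular and makes transparent the single place where the defining property of a simplicial projective resolution enters, at the price of importing these two external results; the paper's double-complex computation is self-contained modulo the Gabriel--Zisman lemma on homology of simplicial sets with coefficients, and has the side benefit of producing the comparison directly at the level of the complexes $C_\bullet(-,-)$ used throughout Section 2.
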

\begin{proof} 
The idea of the proof is the following. We construct a double complex $D_{\bullet\bullet}$ whose
 vetical homology satisfy the following
\begin{equation}\label{eq_vert}
H^{\sf vert}_{\bullet,m}(D_{\bullet\bullet})=
\begin{cases} \MM(\varepsilon^{\sf P}_\bullet), & m=0\\
0, & m\ne 0
\end{cases}
\end{equation}
and the horisontal homology satisfy the following
\begin{equation}\label{eq_hor}
H^{\sf hor}_{n,\bullet}(D_{\bullet\bullet})=
\begin{cases} C_\bullet({\sf Proj} \downarrow c, \MM),& n=0\\
0, & n\ne 0.
\end{cases}
\end{equation}
In this case the two spectral sequences of the double complex $D_{\bullet\bullet}$ imply that
$$\colim_*\: \MM = H_*(C_\bullet({\sf Proj} \downarrow c, \MM))=H_*(\MM(\varepsilon^{\sf P}_\bullet)).$$

Then we only need to construct a double complex $D_{\bullet\bullet}$ satisfying \eqref{eq_vert} and \eqref{eq_hor}.

Set
$$D_{nm}=C_m({\sf Proj} \downarrow p_n,\MM \tilde \varepsilon_n). $$ The vertical differentials come from $C_\bullet({\sf Proj} \downarrow  p_n,\MM \tilde \varepsilon_n).$ Hence $D_{n\bullet}=C_\bullet({\sf Proj} \downarrow  p_n,\MM \tilde \varepsilon_n).$ The horizontal differentials $\delta^{hor}_{n,m}:D_{n,m}\to D_{n-1,m}$ are defined as
$$ \delta^{hor}_{n,m}=\sum (-1)^i C_m(\tilde d_i): C_m({\sf Proj} \downarrow  p_n,\MM \tilde \varepsilon_n) \to C_m({\sf Proj} \downarrow  p_{n-1},\MM \tilde \varepsilon_{n-1}).$$

Prove \eqref{eq_vert}. Since the category ${\sf Proj} \downarrow p_n$ has a terminal object ${\sf id}:p_n \to p_n,$ we obtain $\colim \:\NN=\NN({\sf id})$ for any functor $\NN:{\sf Proj} \downarrow p_n\to \AA.$ Therefore, $\colim : {\AA}^{{\sf Proj} \downarrow p_n} \to {\AA}$ is exact and $\colim_n\: \NN=0$ for $n\geq 1.$ It follows that
$$H_m(C_\bullet({\sf Proj} \downarrow p_n,\MM \tilde \varepsilon_n))=
\begin{cases}
\MM\tilde \varepsilon_n({\sf id})=\MM(\varepsilon_n),& m=0\\
0, & m\ne 0.
\end{cases}
$$
Therefore \eqref{eq_vert} is satisfied.

Prove \eqref{eq_hor}. Note that, if $c'$ an object of $\CC$, then an $n$-simplex of $\NNN({\sf Proj} \downarrow  c')$ is a sequence of morphisms of projective objects $$p^{(0)} \xleftarrow{\alpha_1} p^{(1)} \xleftarrow{\alpha_2} \dots \xleftarrow{\alpha_m} p^{(m)}$$ together with morphisms $p^{(i)}\to c'$ such that the diagram is comutative. In order to fix the simplex it is enough to remember the sequence $p^{(0)}\leftarrow p^{(1)} \leftarrow \dots \leftarrow p^{(m)}$ and  the morphism $p^{(0)}\to c'$ because all other morphisms are compositions of these. So we have:
$$C_m({\sf Proj} \downarrow  c',\NN)=\bigoplus_{(p^{(0)}\leftarrow p^{(1)} \leftarrow \dots \leftarrow p^{(m)})} \ \ \bigoplus_{\varphi:p^{(0)}\to c'} \NN(\varphi \alpha_1\dots \alpha_m).$$
Assume that we have a fixed morphism $\varepsilon':c'\to c$  and $\NN=\MM\tilde \varepsilon'.$  Then $\NN(\varphi \alpha_1\dots \alpha_m)=\MM(\varepsilon'\varphi \alpha_1\dots \alpha_m).$ So, it depends only on the composition $\psi:=\varepsilon'\varphi :p^{(0)}\to c.$ Therefore
$$C_m({\sf Proj} \downarrow  c',\MM\tilde \varepsilon')=\bigoplus_{(p^{(0)}\leftarrow  \dots \leftarrow p^{(m)})} \ \ \bigoplus_{\psi:p^{(0)}\to c} \ \ \bigoplus_{\varphi:\psi \to \varepsilon'} \MM(\psi \alpha_1\dots \alpha_m),$$
where the last sum runs over the hom-set $({{\sf Proj} \downarrow  c})( \psi,\varepsilon').$ Since the summand $\MM(\psi \alpha_1\dots \alpha_m)$ does not really depend on $\varphi,$  we can rewrite this in the following form
$$
C_m({\sf Proj} \downarrow  c',\MM\tilde \varepsilon')=\bigoplus_{(p^{(0)}\leftarrow  \dots \leftarrow p^{(m)})} \ \ \bigoplus_{\psi:p^{(0)}\to c} \ \ \MM(\psi \alpha_1\dots \alpha_m)^{\big( ({{\sf Proj} \downarrow  c})( \psi,\varepsilon') \big)  }
.$$
This isomorphism is natural by $ \varepsilon'$. Therefore, in order to prove \eqref{eq_hor} it is sufficient to prove
$$H_n \left(A^{\big( ({{\sf Proj} \downarrow  c})( \psi,\varepsilon_\bullet)\big)} \right)=\begin{cases} A^{\big( ({{\sf Proj} \downarrow  c})( \psi,id_c)\big)}=A,& n=0 \\ 0& n\ne 0 \end{cases}, $$
where $A$ is an object of $\AA$ and $ \psi:p^{(0)}\to c$ is a morphism from a projective object $p^{(0)}.$

 According to the definition given in \cite[Appendix II \S 4]{GabrielZisman} we see that the left-hand homology is the homology of the simplicial set $({{\sf Proj} \downarrow  c})( \psi,\varepsilon_\bullet) $ with coefficients in the object $A.$ So we need to prove that
$$H_n(({{\sf Proj} \downarrow  c})( \psi,\varepsilon_\bullet),A )= \begin{cases} A,& n=0 \\ 0& n\ne 0 \end{cases}.$$ Lemma 4.3 of  \cite[Appendix II \S 4]{GabrielZisman} implies that it enough to prove that the simplicial set $({{\sf Proj} \downarrow  c})( \psi,\varepsilon_\bullet)$ is contractible.

Prove that $({{\sf Proj} \downarrow  c})( \psi,\varepsilon_\bullet)$ is contractible.  Note that $({{\sf Proj} \downarrow  c})( \psi,\varepsilon_n)$ is the preimage of $\psi \in \CC(p^{(0)},c)$ under the map $\CC(p^{(0)},p_n)\to \CC(p^{(0)},c).$ It follows that
$({{\sf Proj} \downarrow  c})( \psi,\varepsilon_\bullet)$ is the fiber of the trivial Kan fibration $\CC(p^{(0)},p_\bullet)\to \CC(p^{(0)},c^{\rm const}).$ Hence, $({{\sf Proj} \downarrow  c})( \psi,\varepsilon_\bullet)$ is fibrant and acyclic. Thus it is contractible.
\end{proof}

\begin{Remark} The proof of Proposition \ref{prop_quillen_derived} does not work for the case of the category ${\sf Pres}(c)$ because in this case $\varphi$ runs over the set of effective epimorphisms 
$\varphi:\psi \epi \varepsilon'$ but not over the whole hom-set.
\end{Remark}

\begin{Proposition}\label{prop_pres_fgr}
Let  $c\in \CC$ be an object,
$\MM:{\sf Proj} \downarrow c\to \AA$ be a functor and $\MM':{\sf Pres}(c)\to \AA$ be its restriction. Then there are isomorphisms
$$ {\colim_n}\: \MM'={\colim_n}\: \MM.$$
\end{Proposition}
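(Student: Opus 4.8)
The plan is to apply Proposition \ref{prop_preserving_of_colimits} to the full inclusion $\iota\colon {\sf Pres}(c)\hookrightarrow {\sf Proj}\downarrow c$, for which $\MM'=\MM\iota$. Thus it suffices to prove that the comma-category $d\downarrow\iota$ is contractible for every object $d=(\psi\colon p\to c)$ of ${\sf Proj}\downarrow c$, and then invoke Proposition \ref{prop_preserving_of_colimits} to conclude $\colim_n \MM'=\colim_n \MM\iota\cong \colim_n \MM$ for all $n$. Unwinding the definitions, an object of $d\downarrow\iota$ is a pair consisting of a presentation $e\colon q\epi c$ together with a lift $\alpha\colon p\to q$ of $\psi$ (that is, $e\alpha=\psi$), and a morphism is a map of presentations over $c$ commuting with the chosen lifts. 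Note that ${\sf Pres}(c)$ is \emph{not} in general a retract-subcategory of ${\sf Proj}\downarrow c$: if $\psi$ is a retract over $c$ of a presentation then $\psi$ is itself an effective epimorphism, which fails whenever there are non-epimorphic maps from projectives to $c$. Hence Proposition \ref{proposition_subcategory_retract} does not apply, and one must analyse $d\downarrow\iota$ directly.

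First I would fix, using that $\CC$ has enough projectives, a single presentation $\varepsilon_0\colon p_0\epi c$. The key elementary observation is that for any presentation $e\colon q\epi c$ the induced map $(e,\varepsilon_0)\colon q\sqcup p_0\to c$ is again a presentation: the object $q\sqcup p_0$ is projective, and $(e,\varepsilon_0)$ is an effective epimorphism because $e$ already is, using the characterization that a map from a projective object is an effective epimorphism if and only if every morphism $p'\to c$ from a projective $p'$ lifts through it (any such lift through $e$ composes with the inclusion $q\to q\sqcup p_0$). This is the one point where the standing hypotheses on $\CC$ are genuinely used, and it is the main, if modest, obstacle; the remainder is formal.

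Using this, I would define an endofunctor $\Phi\colon d\downarrow\iota\to d\downarrow\iota$ by $\Phi(e,\alpha)=\big((e,\varepsilon_0)\colon q\sqcup p_0\epi c,\ \mathrm{incl}_q\circ\alpha\big)$, together with the fixed object $c_0=\big((\psi,\varepsilon_0)\colon p\sqcup p_0\epi c,\ \mathrm{incl}_p\big)$ (which lies in $d\downarrow\iota$ since $(\psi,\varepsilon_0)\circ\mathrm{incl}_p=\psi$). There are two natural transformations. The first, $u\colon {\sf Id}\Rightarrow\Phi$, has component at $(e,\alpha)$ the coproduct inclusion $\mathrm{incl}_q\colon q\to q\sqcup p_0$, which is a morphism over $c$ (as $(e,\varepsilon_0)\,\mathrm{incl}_q=e$) compatible with the lifts. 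The second, $v\colon c_0^{\sf const}\Rightarrow\Phi$, has component $\alpha\sqcup{\sf id}_{p_0}\colon p\sqcup p_0\to q\sqcup p_0$; one checks it is over $c$ and carries $\mathrm{incl}_p$ to $\mathrm{incl}_q\circ\alpha$, hence is a morphism of $d\downarrow\iota$. Naturality of both $u$ and $v$ in $(e,\alpha)$ is a direct diagram chase with the universal property of $\sqcup$.

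The resulting zig-zag ${\sf Id}\Rightarrow\Phi\Leftarrow c_0^{\sf const}$ of natural transformations shows, by the remark preceding Proposition \ref{prop_coproduct}, that the nerve of $d\downarrow\iota$ is contractible. Since $d$ was arbitrary, Proposition \ref{prop_preserving_of_colimits} applies and gives the desired isomorphisms $\colim_n\MM'\cong\colim_n\MM$ for all $n$.
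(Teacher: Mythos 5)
Your proof is correct and follows essentially the same route as the paper's: both reduce to Proposition \ref{prop_preserving_of_colimits} and contract the comma category $d\downarrow{\sf Pres}(c)$ by forming the coproduct with a fixed presentation $p_0\epi c$. The only difference is cosmetic --- the paper verifies that this coproduct actually exists in the comma category (via a pushout over $p_0$) and then invokes Proposition \ref{prop_coproduct}, whereas you write down the resulting zig-zag of natural transformations ${\sf Id}\Rightarrow \Phi \Leftarrow c_0^{\sf const}$ directly.
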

\begin{proof} We only need to check that the embedding ${\sf Pres}(c) \hookrightarrow {\sf Proj}\! \downarrow\! c$ satisfies the assumption of Proposition \ref{prop_preserving_of_colimits}. So we need to prove that for any fixed morphism $\varphi_0: p_0\to c$ from a projective object $p_0$ the category $\varphi_0 \!\downarrow\! {\sf Pres}(c)$ is contractible.  An object of the category $\varphi_0 \!\downarrow\! {\sf Pres}(c)$ is a triple $C=(p,\pi,\alpha),$ where $p$ is projective, $\pi:p\epi c$ is a presentation and $\alpha:p_0\to p$ is a morphism such that the diagram
$$ C: \hspace{1cm}
\begin{tikzcd}
p_0\arrow[rr,"\alpha"]\arrow[dr,"\varphi_0"'] & & p\arrow[dl,twoheadrightarrow,"\pi"]\\
& c &
\end{tikzcd}
 $$
is commutative. Chose a fixed presentation $\varepsilon: p_1\epi c.$ Consider the object $C_0=(p_0\sqcup p_1,(\varphi_0,\varepsilon),i_{p_0})$ of the category,  $i_{p_0}:p_0\to p_0\sqcup p_1$ is the standard embedding and $(\varphi_0,\varepsilon):p_0\sqcup p_1\epi c$ is the morphism whose restriction on $p_0$ is $\varphi_0$ and the restriction on $p_1$ is the standard projection $\varepsilon:p_1\epi c$.
$$ C_0: \hspace{1cm}
\begin{tikzcd}
p_0\arrow[rr,"i_{p_0}"]\arrow[dr,"\varphi_0"'] & & p_0\sqcup p_1
\arrow[dl,twoheadrightarrow,"{(\varphi_0,\varepsilon)}"]\\
& c &
\end{tikzcd}
 $$
By Proposition \ref{prop_coproduct} we obtain that it is sufficient to prove that the coproduct $C\sqcup C_0$ exists for any object $C=(p,\pi,\alpha).$ We construct it as follows. Consider the object $C'=(p \sqcup p_1 ,(\pi,\varepsilon),i_{p} \circ \alpha )$
$$ C': \hspace{1cm}
\begin{tikzcd}
p_0\arrow[rr," i_{p} \circ\alpha"]\arrow[dr,"\varphi_0"'] & &  p \sqcup p_1
\arrow[dl,twoheadrightarrow,"{(\pi,\varepsilon)}"]\\
& c &
\end{tikzcd}
 $$
and prove that $C'=C\sqcup C_0.$ Note that the big square
$$
\begin{tikzcd}
p_0\arrow[rr]\arrow[dd]\arrow[dr,"\varphi_0"] & & p_0 \sqcup p_1
\arrow[dl,twoheadrightarrow]\arrow[dd]\\
& c & \\
p\arrow[ur,twoheadrightarrow]\arrow[rr] & & p \sqcup p_1 \arrow[ul,twoheadrightarrow]
\end{tikzcd}
 $$
is a pushout in the category $\CC$. Moreover, one can check that this is a pushout in the category ${\sf Proj}\!\downarrow\! c.$ Generally, if $\DD$ is a category and $d\in \DD,$ then the coproduct in $d\!\downarrow\!\DD$ is the pushout over $d.$ Therefore, the square is a coproduct in $\varphi_0 \downarrow ({\sf Proj} \! \downarrow \! c).$ Since it lies in $\varphi_0\! \downarrow \! {\sf Pres}(c)$, it is a copoduct in $\varphi_0 \downarrow {\sf Pres}(c)$ as well.
\end{proof}

\begin{Theorem}[{cf. \cite[Prop. 5.5.8.15]{Lurie}}]\label{th_derived}
Let  $\varepsilon_\bullet:p_\bullet \to c^{\sf const}$ be a simplicial projective resolution, $\varepsilon^{\sf P}_\bullet$ be the corresponding simpicial presentation,
$\MM:{\sf Proj}\! \downarrow\! c \to \AA$ be a functor and $\MM':{\sf Pres}(c)\to \AA$ be its restriction on the category of presentations. Then for any $n$  there is an isomorphism
$${\colim_n}\: \MM'= \pi_n(\MM(\varepsilon^{\sf P}_\bullet)).$$
\end{Theorem}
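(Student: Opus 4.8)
The plan is to obtain the statement by chaining together the two immediately preceding results, which have already carried out all of the genuine work. The theorem compares three objects: the derived colimit $\colim_n\: \MM'$ over the category of presentations ${\sf Pres}(c)$, the derived colimit of $\MM$ over the larger category ${\sf Proj}\!\downarrow\! c$, and the simplicial homotopy groups $\pi_n(\MM(\varepsilon^{\sf P}_\bullet))$. Proposition \ref{prop_quillen_derived} already identifies the last two of these, while Proposition \ref{prop_pres_fgr} identifies the first two. So the whole argument is a composition of two isomorphisms.

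First I would invoke Proposition \ref{prop_pres_fgr}, applied to the functor $\MM:{\sf Proj}\!\downarrow\! c\to \AA$ and its restriction $\MM'$ to ${\sf Pres}(c)$. This gives, for every $n$, the isomorphism
$$\colim_n\: \MM' \cong \underset{{\sf Proj}\downarrow c}{\colim_n}\: \MM,$$
so that the derived colimit over the (a priori smaller and more delicate) category of presentations agrees with the derived colimit over all projective objects lying over $c$. The substance behind this step is the verification, carried out there, that each comma-category $\varphi_0\!\downarrow\!{\sf Pres}(c)$ is contractible, which in turn feeds into Proposition \ref{prop_preserving_of_colimits}.

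Then I would invoke Proposition \ref{prop_quillen_derived} for the same resolution $\varepsilon_\bullet$ and the same functor $\MM$, which yields
$$\underset{{\sf Proj}\downarrow c}{\colim_n}\: \MM \cong \pi_n(\MM(\varepsilon^{\sf P}_\bullet)).$$
Composing the two displayed isomorphisms produces the desired identity $\colim_n\: \MM' \cong \pi_n(\MM(\varepsilon^{\sf P}_\bullet))$. The real content here is the double-complex $D_{\bullet\bullet}$ of that proposition, whose two spectral sequences bridge $C_\bullet({\sf Proj}\!\downarrow\! c,\MM)$ and $\MM(\varepsilon^{\sf P}_\bullet)$.

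Since both inputs are already established, there is essentially no obstacle remaining at this final step; all the difficulty has been pushed into the two cited propositions. The one point deserving a word of care is simply that both isomorphisms are constructed for the \emph{same} choice of simplicial presentation $\varepsilon^{\sf P}_\bullet$ and the \emph{same} functor $\MM$, so that they may be composed verbatim — which they can, by the hypotheses of the theorem.
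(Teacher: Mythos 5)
Your proposal is correct and matches the paper's own proof exactly: the paper also deduces the theorem by combining Proposition \ref{prop_pres_fgr} (identifying $\colim_n\: \MM'$ with $\colim_n\: \MM$ over ${\sf Proj}\!\downarrow\! c$) with Proposition \ref{prop_quillen_derived} (identifying the latter with $\pi_n(\MM(\varepsilon^{\sf P}_\bullet))$). Nothing further is needed.
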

\begin{proof}
This follows from Proposition \ref{prop_quillen_derived} and Proposition \ref{prop_pres_fgr}.
\end{proof}

\begin{Corollary}\label{cor_derived} Let $\Phi: \CC\to \AA$ be a functor, $\varepsilon_\bullet:p_\bullet \to c^{\sf const}$ be a siplicial projective resolution. Then for any $n\geq 0$ there is an isomorphism
 $$ \colim_n \Phi'= \pi_n(\Phi( p_\bullet) ) ,$$
where $\Phi':{\sf Pres}(c)\to \AA$ is the functor such that $$\Phi'(p\epi c)=\Phi(p).$$
\end{Corollary}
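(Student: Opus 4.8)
The plan is to derive this Corollary directly from Theorem \ref{th_derived} by specializing to a functor that factors through the forgetful functor. Recall that Theorem \ref{th_derived} is stated for a functor $\MM:{\sf Proj}\!\downarrow\! c\to\AA$ defined on the whole comma-category of projective objects over $c$, and gives ${\colim_n}\,\MM'=\pi_n(\MM(\varepsilon^{\sf P}_\bullet))$ for its restriction $\MM'$ to ${\sf Pres}(c)$. The functor $\Phi'$ in the Corollary, by contrast, is built from a functor $\Phi:\CC\to\AA$ and only sees the \emph{source} of a presentation, forgetting the structure morphism $p\epi c$. So the key observation is that $\Phi'$ is the restriction to ${\sf Pres}(c)$ of a functor on ${\sf Proj}\!\downarrow\! c$ of exactly the form required by the Theorem.

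First I would define $\MM:{\sf Proj}\!\downarrow\! c\to\AA$ as the composite of the forgetful functor $U:{\sf Proj}\!\downarrow\! c\to\CC$, sending an object $(p\to c)$ to $p$ and a morphism over $c$ to the underlying morphism in $\CC$, with the given functor $\Phi$; that is, $\MM:=\Phi\circ U$. This is manifestly a well-defined functor, since $U$ is functorial (composition and identities are preserved because morphisms in ${\sf Proj}\!\downarrow\! c$ are just morphisms in $\CC$ compatible with the maps to $c$). By construction, for a presentation $(p\epi c)\in{\sf Pres}(c)$ we have $\MM(p\epi c)=\Phi(p)$, which is precisely the defining formula for $\Phi'$. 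Hence the restriction $\MM'=\MM|_{{\sf Pres}(c)}$ equals $\Phi'$.

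Next I would simply invoke Theorem \ref{th_derived} for this $\MM$. The right-hand side there is $\pi_n(\MM(\varepsilon^{\sf P}_\bullet))$, where $\varepsilon^{\sf P}_\bullet\in{\sf Pres}(c)^{\Delta^{\rm op}}$ is the simplicial presentation associated to the resolution $\varepsilon_\bullet$. Evaluating $\MM=\Phi\circ U$ on $\varepsilon^{\sf P}_\bullet$ levelwise gives $\MM(\varepsilon^{\sf P}_n)=\Phi(U(\varepsilon^{\sf P}_n))=\Phi(p_n)$, since $U$ sends the presentation $\varepsilon_n:p_n\epi c$ to its source $p_n$. Because $U$ intertwines the simplicial structure maps of $\varepsilon^{\sf P}_\bullet$ (which are the face and degeneracy maps of the resolution) with the underlying maps $p_\bullet$ of the original simplicial object, the simplicial object $\MM(\varepsilon^{\sf P}_\bullet)$ is identified with $\Phi(p_\bullet)$ on the nose. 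Therefore $\pi_n(\MM(\varepsilon^{\sf P}_\bullet))=\pi_n(\Phi(p_\bullet))$, and combined with the Theorem's isomorphism ${\colim_n}\,\MM'=\pi_n(\MM(\varepsilon^{\sf P}_\bullet))$ together with $\MM'=\Phi'$ we obtain ${\colim_n}\,\Phi'=\pi_n(\Phi(p_\bullet))$, as claimed.

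There is essentially no hard step here: the Corollary is a clean specialization, and the only thing to verify carefully is the bookkeeping identification $\MM(\varepsilon^{\sf P}_\bullet)=\Phi(p_\bullet)$ as simplicial objects of $\AA$, i.e.\ that forgetting the augmentation to $c$ is compatible with all the face and degeneracy maps. I would spell out that $U$ applied to $\varepsilon^{\sf P}_\bullet$ recovers $p_\bullet$, which is immediate from the construction of $\varepsilon^{\sf P}_\bullet$ recorded just before Proposition \ref{prop_quillen_derived}. The mild subtlety worth flagging is that $U$ is not full --- a morphism $p\to p'$ in $\CC$ need not respect the maps to $c$ --- but this is irrelevant, since we only need $U$ to be a functor for $\MM=\Phi\circ U$ to be well defined, and the resolution $\varepsilon^{\sf P}_\bullet$ lives in ${\sf Pres}(c)$ where all structure maps are by definition compatible with the augmentation.
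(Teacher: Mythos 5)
Your proof is correct and is exactly the intended deduction: the paper states the Corollary without proof immediately after Theorem \ref{th_derived}, and the evident argument is precisely yours, namely setting $\MM=\Phi\circ U$ for the forgetful functor $U:{\sf Proj}\!\downarrow\! c\to\CC$, observing $\MM'=\Phi'$ and $\MM(\varepsilon^{\sf P}_\bullet)=\Phi(p_\bullet)$, and invoking the Theorem. The bookkeeping points you flag (functoriality of $U$, compatibility with the simplicial structure maps) are the right ones and are handled correctly.
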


\begin{Remark} Corollary \ref{cor_derived} implies that the homotopy groups $\pi_n(\Phi( p_\bullet) )$ in our general setting do not depend on the choice of the resolution $p_\bullet.$ Thus    $\colim_n \Phi(p)$ can be considered as a version of a derived functor for a non-additive functor.
\end{Remark}

\begin{Remark}\label{rem_free} Assume that in the category $\CC$ there is a class of ``free'' objects $\mathcal F$  which consists of projective objects, and any projective object is a retract of a on object from $\mathcal F$. Denote by ${\sf Pres}^\FF(c)$ the full subcategory of ${\sf Pres}(c)$
$${\sf Pres}^\FF(c)\subseteq {\sf Pres}(c) $$
consisting of presentations $f\epi c,$ where $f\in \FF.$ For any functor $\MM:{\sf Pres}(c)\to \AA$ we denote by $\MM^\FF:{\sf Pres}^\FF(c)\to \AA$ its restriction.  Then Proposition \ref{proposition_subcategory_retract} implies that  there is an isomorphism
$$ \colim_* \MM=\colim_*\MM^\FF.$$
 \end{Remark}

\begin{Lemma} Let $\CC$ be a category  together with a couple of adjoint functors 
$$F:{\sf Sets} \rightleftarrows \CC :U$$
such that $U$ is a composition of some functor to the category of groups and the forgetful functor from groups to sets:
$$U : \CC \xrightarrow{U'} {\sf Gr} \to {\sf Sets}.$$
Moreover, we assume that a morphism $\alpha$ of $\CC$ is an effective epimorphism if and only if  $U\alpha$ is a surjection.  Then
\begin{itemize}
\item $\CC$ has enough of projectives;

\item an object is projective if and only if it is a retract of $FX$ for some set $X;$

\item  a simplicial object together with a morphism 
$\varepsilon :p_\bullet\to c^{\sf const}$ 
is a simplicial projective resolution if and only if $p_n$ is projective for any $n$ and
 $U'\varepsilon :U'p_\bullet\to U'c^{\sf const}$ is a weak equivalence in ${\sf Gr}.$
\end{itemize} 
\end{Lemma}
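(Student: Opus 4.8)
The plan is to prove the three assertions in turn, all of them resting on the adjunction isomorphism $\CC(FX,Y)\cong {\sf Sets}(X,UY)$ (natural in $Y$) together with the hypothesis that a morphism $\alpha$ of $\CC$ is an effective epimorphism exactly when $U\alpha$ is a surjection of sets. Throughout I read ``weak equivalence in $\Gr$'' as a weak equivalence of simplicial groups, i.e.\ a map whose underlying morphism of simplicial sets is a weak equivalence, since the forgetful functor from simplicial groups to simplicial sets creates weak equivalences.

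For the first two bullets I would argue as follows. To see that $FX$ is projective, apply $\CC(FX,-)$ to an effective epimorphism $\alpha$; by the adjunction this becomes ${\sf Sets}(X,U\alpha)$, and since $U\alpha$ is a surjection of sets it admits a section, so ${\sf Sets}(X,U\alpha)$ is surjective. Retracts of projectives are projective, so every retract of an $FX$ is projective. For the remaining (harder) half I would use the counit $\theta_c\colon FU(c)\to c$: one triangle identity shows that $U\theta_c$ is split by the unit, hence is surjective, so $\theta_c$ is an effective epimorphism from a projective object, i.e.\ a presentation. This simultaneously gives ``enough projectives'' and, by applying projectivity of a projective $p$ to the effective epimorphism $\theta_p\colon FU(p)\to p$, produces a section of $\theta_p$ that exhibits $p$ as a retract of $F(Up)$; this is the converse characterization.

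For the third bullet, the direction ``simplicial projective resolution $\Rightarrow$ weak equivalence'' is immediate: taking $p'=F(\ast)$ in condition (1) and using the adjunction identifies $\CC(F(\ast),\varepsilon_\bullet)$ with $U\varepsilon_\bullet\colon Up_\bullet\to (Uc)^{\sf const}$, which is then a trivial Kan fibration, in particular a weak equivalence of simplicial sets; hence $U'\varepsilon$ is a weak equivalence of simplicial groups. The converse is where the real work lies. Assuming each $p_n$ projective and $U'\varepsilon$ a weak equivalence, I would first show $U'\varepsilon$ is degreewise surjective. On $\pi_0$ the weak equivalence yields an isomorphism $\pi_0(U'p_\bullet)\cong U'c$, and since the quotient map $U'p_0\twoheadrightarrow \pi_0(U'p_\bullet)$ fits into a commuting triangle with $U'\varepsilon_0$, the map $U'\varepsilon_0$ is surjective. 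Because $\varepsilon$ lands in a constant simplicial object, the simplicial identities force $\varepsilon_n=\varepsilon_0\, d_0^{\,n}$, and each $d_0$ is a split epimorphism (with section $s_0$, since $d_0 s_0=\mathrm{id}$), so $U'\varepsilon_n$ is surjective for every $n$. Now I invoke the classical theorem (Moore) that a degreewise surjection of simplicial groups is a Kan fibration of underlying simplicial sets; combined with the weak equivalence, $U\varepsilon_\bullet$ is a trivial Kan fibration. For $p'=FX$ the adjunction gives $\CC(FX,\varepsilon_\bullet)=(U\varepsilon_\bullet)^X$, a product of copies of a trivial Kan fibration, hence again a trivial Kan fibration; for a general projective $p'$, realized as a retract of some $FX$ by the second bullet, $\CC(p',\varepsilon_\bullet)$ is a retract of $\CC(FX,\varepsilon_\bullet)$, and trivial Kan fibrations are closed under retracts. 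This verifies condition (1), and condition (2) follows since $U\varepsilon_0$ surjective makes $\varepsilon_0$ an effective epimorphism from the projective $p_0$.

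The main obstacle is precisely the upgrade in the last paragraph from a mere weak equivalence to the trivial Kan fibration appearing in condition (1): a weak equivalence of simplicial groups is not in general degreewise surjective, so one cannot directly apply Moore's theorem. The key observation dissolving this difficulty is that the target is a \emph{constant} simplicial object, which forces $\varepsilon_n=\varepsilon_0 d_0^{\,n}$ and lets the split epimorphisms $d_0$ propagate degree-$0$ surjectivity (extracted from the $\pi_0$-isomorphism) to all degrees. Once degreewise surjectivity is in hand, everything else — the Kan fibration property, the passage through powers by $\CC(FX,-)$, and the reduction to general projectives by retracts — is formal.
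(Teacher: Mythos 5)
Your proposal is correct and follows essentially the same route as the paper: the adjunction isomorphism $\CC(FX,-)\cong{\sf Sets}(X,U(-))$ plus retracts for the first two bullets, and for the third the reduction to $FX$'s via retracts, the identification $\CC(FX,\varepsilon_\bullet)=(U\varepsilon_\bullet)^X$, the fact that a degreewise surjection of simplicial groups is a Kan fibration, and the extraction of degree-$0$ surjectivity from the $\pi_0$-isomorphism followed by propagation via $\varepsilon_n=\varepsilon_0 d_0^n$. You merely spell out more explicitly the step the paper leaves implicit (that constancy of the target forces degreewise surjectivity from surjectivity in degree $0$).
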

\begin{proof}
First note that, if we combine the fact that $U$ sends effective epimorphisms to surjections together with the isomorphism $\CC(FX,-)\cong {\sf Sets}(X,U(-)),$ we obtain that $FX$ is projective for any set $X$. A retract of a projective object is projective. Hence, a retract of $FX$ is projective.  On the other hand, since the composition $Uc\to UFUc\to Uc$ is the identity, we obtain that $UFUc\to Uc$ is a surjection, and hence, $FUc\epi c$ is an effective epimorphism. 
 It follows that $\CC$ has enough of projectives. Moreover, a projective object $p$ is a retract of $FUp.$ Hence an object is projective if and only if it is a retract of $FX$ for some $X.$ 
 
A retract of Kan fibration is a Kan fibration. Thus $\varepsilon:p_\bullet\to c$ is a simplicial projective resolution iff $\CC(FX,p_\bullet) \to \CC(FX,c)$ is a trivial Kan fibration and $\CC(FX,p_0)\to  \CC(FX,c)$ is surjective for any set $X.$ Using the adjunction we obtain that it is equivalent to the fact that $(U'p_\bullet)^X \to (U'c)^X$ is a trivial Kan fibration and $(U'p_0)^X\to (U'c)^X$ is an epimorphism. Since an epimorphism of simplicial groups is always a Kan fibration, then this is equivalent to the fact that $U'p_\bullet \to U'c^{\sf const}$ is a weak equivalence and $U'p_0\to U'c$ is an epimorphism. Since $\pi_0(U'p_\bullet)$ is a quotient of $U'p_0$ and $\pi_0(U'c^{\sf const})=c,$ we obtain that $U'p_0\epi c$ is an epimorphism if $U'p_\bullet\to U'c^{\sf const}$ is a weak equivalence. The assertion follows.
\end{proof}

\begin{Proposition}\label{prop_Andre-Quillen} Let $k$ be a commutative ring, $A$ be a commutative $k$-algebra, $M$ be an $A$-module. Consider the category ${\sf Pres}^{\sf poly}(A)$ of surjective homomorphisms $F\epi A,$ where $F$ is a polynomial $k$-algebra. Then the  
Andr\'e-Quillen homology can be presented as following derived colimit over the category 
 $$D_n(A/k,M)=\underset{{\sf Pres}(A)}{\colim_n}\ \Omega^{\rm comm}(F)\otimes_F M, $$
where $\Omega^{\rm comm}(F)$ is the module of K\"ahler differentials of $F.$ 
\end{Proposition}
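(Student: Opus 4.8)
The plan is to realize the functor $F\mapsto \Omega^{\rm comm}(F)\otimes_F M$ as a functor on ${\sf Proj}\!\downarrow\! A$, apply Theorem \ref{th_derived} to compute its derived colimit as the homotopy of a simplicial projective resolution, and then identify that homotopy with Andr\'e-Quillen homology via the standard free-resolution description of the cotangent complex. First I would fix the ambient category $\CC$ to be the category of ($U$-small) commutative $k$-algebras: it has all finite limits and colimits, the surjections are exactly the effective epimorphisms, and the free objects $F(X)=k[X]$ are projective, every projective being a retract of such a polynomial algebra. Thus the objects of ${\sf Pres}^{\sf poly}(A)$ are precisely the ``free'' presentations of $A$ in the sense of Remark \ref{rem_free}, and that remark gives $\colim_*\,\MM=\colim_*\,\MM^\FF$; so the colimit over ${\sf Pres}(A)$ and the colimit over ${\sf Pres}^{\sf poly}(A)$ agree, which resolves the apparent discrepancy in the statement between the two categories.

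Next I would define a functor $\MM:{\sf Proj}\!\downarrow\! A\to {\sf Mod}(k)$ by $\MM(\pi\colon F\to A)=\Omega^{\rm comm}(F)\otimes_F M$, where $M$ carries the $F$-module structure induced by $\pi$ (this is why one must work with a functor on ${\sf Proj}\!\downarrow\! A$ and invoke Theorem \ref{th_derived} rather than Corollary \ref{cor_derived}, since the module structure genuinely depends on the augmentation to $A$). A morphism of objects over $A$ induces the evident map on K\"ahler differentials and hence a map of tensor products, so $\MM$ is functorial, and its restriction $\MM'$ to ${\sf Pres}(A)$ is the functor in the statement. Choosing a simplicial projective resolution $\varepsilon_\bullet\colon F_\bullet\to A^{\sf const}$ with corresponding simplicial presentation $\varepsilon^{\sf P}_\bullet$, Theorem \ref{th_derived} yields
$$\underset{{\sf Pres}(A)}{\colim_n}\ \Omega^{\rm comm}(F)\otimes_F M=\pi_n\bigl(\MM(\varepsilon^{\sf P}_\bullet)\bigr)=\pi_n\bigl(\Omega^{\rm comm}(F_\bullet)\otimes_{F_\bullet}M\bigr).$$
To conclude I would invoke the standard description of Andr\'e-Quillen homology: for a free simplicial resolution $F_\bullet\to A$ the cotangent complex is $\mathbb{L}_{A/k}=\Omega^{\rm comm}(F_\bullet)\otimes_{F_\bullet}A$, whence $D_n(A/k,M)=\pi_n(\mathbb{L}_{A/k}\otimes_A M)=\pi_n(\Omega^{\rm comm}(F_\bullet)\otimes_{F_\bullet}M)$, which is exactly the right-hand side of the display.

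The crux of the argument is checking the hypothesis fed into Theorem \ref{th_derived}, namely that a simplicial projective resolution in the sense of Definition \ref{def_simpl_proj_res} is precisely the data Quillen uses to define $D_*$. This is where the preceding Lemma on an adjunction $F\colon{\sf Sets}\rightleftarrows\CC\colon U$ is used: taking $U$ to be the underlying set, which factors through the underlying additive group $U'\colon\CC\to{\sf Gr}$, the lemma identifies simplicial projective resolutions with levelwise-free simplicial algebras $F_\bullet$ for which $U'F_\bullet\to U'A^{\sf const}$ is a weak equivalence, i.e.\ with ordinary free simplicial resolutions of $A$. Verifying this identification (and that polynomial algebras supply the free objects whose differentials compute $\mathbb{L}_{A/k}$) is the main obstacle; once it is in place, the matching of $\pi_n(\Omega^{\rm comm}(F_\bullet)\otimes_{F_\bullet}M)$ with $D_n(A/k,M)$ is the standard computation and the remainder is bookkeeping.
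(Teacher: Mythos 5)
Your proposal is correct and follows essentially the same route as the paper: reduce from ${\sf Pres}(A)$ to ${\sf Pres}^{\sf poly}(A)$ via Remark \ref{rem_free}, apply Theorem \ref{th_derived} to the functor $(F\epi A)\mapsto \Omega^{\rm comm}(F)\otimes_F M$ on ${\sf Proj}\!\downarrow\! A$, and identify $\pi_n(\Omega^{\rm comm}(F_\bullet)\otimes_{F_\bullet}M)$ with $D_n(A/k,M)$ by the standard definition of Andr\'e--Quillen homology. The paper's proof is a four-line version of exactly this; your added care about why Theorem \ref{th_derived} (rather than Corollary \ref{cor_derived}) is needed, and about matching Definition \ref{def_simpl_proj_res} with Quillen's free simplicial resolutions via the adjunction lemma, fills in details the paper leaves implicit.
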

\begin{proof}
Effective epimorphisms of the category of commutative $k$-algebras are surjective homomorphisms. Projective objects are retracts of polynomial algebras. Remark \ref{rem_free} show that colimits over ${\sf Pres}(A)$ and ${\sf Pres}^{\sf poly}(A)$ are the same. By the definition we have
$$D_n(A/k,M)=\pi_n\left(\Omega^{\rm comm}(F_\bullet)\otimes_{F_\bullet} M \right)$$
(see \cite[Def. 8.8.2]{Weibel}). Then the assertion follows from Theorem \ref{th_derived}. 
\end{proof}

\section{\bf Groups}

In this section we fix a  group $G$ and all colimits are considered over the category of presentations ${\sf Pres}(G).$ A presentation here is a surjective homomorphism from a free group $F\epi G.$ Its kernel is denoted by $$R={\rm Ker}(F\epi G).$$ We treat $F$ and $R$ as functors ${\sf Pres}(G)\to {\sf Gr}.$

Here we fix two universes $U\in U'$  and assume that $G$ is $U$-small; a presentation $F\epi G$ is $U$-small; denote by ${\sf Gr}$ the category of $U$-small groups; but denote by ${\sf Ab}$ the category of $U'$-small abelian groups.

\begin{Proposition}\label{derpropos} Let $\Phi:{\sf Gr}\to {\sf Ab}$ be a functor and $L_n\Phi$ be the simplicial derived functors of $\Phi$. Then
$$ L_n \Phi(G)=\colim_n \Phi(F),$$
where $\Phi(F):{\sf Pres}(G)\to {\sf Ab}$ is the functor that sends $(F\epi G) $ to $\Phi(F).$
\end{Proposition}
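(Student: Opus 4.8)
The plan is to reduce Proposition \ref{derpropos} to the already-established Corollary \ref{cor_derived}, which identifies derived colimits over ${\sf Pres}(c)$ with the homotopy groups of a functor applied to a simplicial projective resolution. The key observation is that the category of groups ${\sf Gr}$ is precisely of the type handled by the Lemma preceding Proposition \ref{prop_Andre-Quillen}: there is a free-forgetful adjunction $F:{\sf Sets}\rightleftarrows {\sf Gr}:U$, effective epimorphisms are exactly surjections, and the forgetful functor $U$ factors trivially through ${\sf Gr}$ (taking $U'={\sf Id}_{\sf Gr}$). Thus that Lemma applies and tells us that projective objects in ${\sf Gr}$ are retracts of free groups, and that a morphism $\varepsilon_\bullet:p_\bullet\to G^{\sf const}$ is a simplicial projective resolution if and only if each $p_n$ is projective and $\varepsilon_\bullet$ is a weak equivalence of simplicial groups.

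First I would invoke Corollary \ref{cor_derived} directly. For any functor $\Phi:{\sf Gr}\to {\sf Ab}$ and any simplicial projective resolution $\varepsilon_\bullet:p_\bullet\to G^{\sf const}$, that corollary gives
$$ \colim_n\: \Phi(F)=\pi_n(\Phi(p_\bullet)). $$
Here $\Phi(F)$ denotes the functor ${\sf Pres}(G)\to{\sf Ab}$ sending $(F\epi G)$ to $\Phi(F)$, exactly as in the statement. So everything reduces to identifying $\pi_n(\Phi(p_\bullet))$ with the simplicial derived functor $L_n\Phi(G)$.

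The second step is to choose the resolution appropriately and match it to the definition of $L_n\Phi$. The simplicial derived functors $L_n\Phi$ of a functor on groups are computed (in the sense of Dold--Puppe, following Quillen) by taking a cofibrant simplicial resolution $p_\bullet\epi G$ by free groups and setting $L_n\Phi(G)=\pi_n(\Phi(p_\bullet))$. The content to verify is that the notion of simplicial projective resolution in Definition \ref{def_simpl_proj_res}, when specialized to ${\sf Gr}$ via the Lemma cited above, coincides with the class of simplicial free (equivalently, cofibrant) resolutions used to define $L_n\Phi$. By the Lemma, a simplicial projective resolution is the same as a levelwise-projective simplicial group mapping to $G^{\sf const}$ by a weak equivalence, and by Remark \ref{rem_free} the colimit is unchanged if we restrict to free presentations; hence we may take $p_\bullet$ to be a levelwise free simplicial group weakly equivalent to $G$, which is precisely a free simplicial resolution. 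Since the homotopy groups $\pi_n(\Phi(p_\bullet))$ are independent of the chosen resolution (the Remark after Corollary \ref{cor_derived}), and the standard definition of $L_n\Phi$ uses exactly such a resolution, the two agree.

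The main obstacle, and the only genuinely non-formal point, is the verification that Definition \ref{def_simpl_proj_res}'s condition (the map $\CC(p',\varepsilon_\bullet)$ being a trivial Kan fibration for all projective $p'$) really matches the standard model-categorical notion of a cofibrant replacement used to define $L_n\Phi$ in ${\sf Gr}$. This is handled entirely by the cited Lemma, whose proof reinterprets the trivial-Kan-fibration condition, via the free-forgetful adjunction and the fact that epimorphisms of simplicial groups are Kan fibrations, as the statement that $\varepsilon_\bullet$ is simply a weak equivalence of simplicial groups with $p_0\epi G$ surjective. Granting that Lemma, the proof is a short chain: apply Corollary \ref{cor_derived}, identify the resulting homotopy groups with those of a free simplicial resolution, and recognize these as $L_n\Phi(G)$ by definition.
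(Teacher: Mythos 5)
Your proposal is correct and takes essentially the same route as the paper, whose entire proof of Proposition \ref{derpropos} is the single sentence ``This follows from Corollary \ref{cor_derived}.'' You simply make explicit the details the paper leaves implicit, namely that the Lemma on free--forgetful adjunctions identifies simplicial projective resolutions in ${\sf Gr}$ with levelwise-free simplicial resolutions weakly equivalent to $G$, which is exactly the input for the standard definition of $L_n\Phi$.
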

\begin{proof}
This follows from Corollary \ref{cor_derived}.
\end{proof}

\begin{Proposition}\label{prop_Phi(R)} Let $\Phi:{\sf Gr}\to {\sf Ab}$ be a functor. Then
$$ \colim_0\ \Phi(R)=\Phi(1), \hspace{1cm} \colim_n \Phi(R)=0 $$
for $n\geq 1,$
where $\Phi(R):{\sf Pres}(G)\to {\sf Ab}$ is the functor that sends $(F\epi G)$ to $\Phi(R)=\Phi({\rm Ker}(F\epi G)).$
\end{Proposition}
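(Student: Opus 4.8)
The plan is to compute the colimit through a single convenient simplicial resolution and then to recognise the resulting simplicial group as a resolution of the trivial group, so that the whole computation collapses.

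First I would promote $\Phi(R)$ to a functor on the larger comma category. Define $\MM:{\sf Proj}\!\downarrow\! G\to {\sf Ab}$ by $\MM(p\xrightarrow{\varphi} G)=\Phi({\rm Ker}\,\varphi)$; this is functorial because a morphism over $G$ carries kernel into kernel, and its restriction to ${\sf Pres}(G)$ is precisely the functor $\Phi(R)$. Fix a simplicial projective resolution $\varepsilon_\bullet:p_\bullet\to G^{\sf const}$ with associated simplicial presentation $\varepsilon^{\sf P}_\bullet$, and set $R_\bullet={\rm Ker}(\varepsilon_\bullet)$, a simplicial group. Theorem \ref{th_derived} then gives
$$\colim_n\,\Phi(R)=\pi_n\big(\MM(\varepsilon^{\sf P}_\bullet)\big)=\pi_n\big(\Phi(R_\bullet)\big),$$
so the problem is reduced to understanding the simplicial group $R_\bullet$.

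The key claim is that $R_\bullet\to 1^{\sf const}$ is itself a simplicial projective resolution of the trivial group. Its degreewise freeness is where the group-theoretic input enters: each $p_n$ is projective, hence free, and $R_n={\rm Ker}(\varepsilon_n)$ is a subgroup of $p_n$, so it is free by the Nielsen--Schreier theorem; in particular each $R_n$ is projective. For condition (1) of Definition \ref{def_simpl_proj_res} I would argue exactly as in the proof of Proposition \ref{prop_quillen_derived}: for any projective $p'$ the simplicial set $\CC(p',R_\bullet)$ is the fibre of the trivial Kan fibration $\CC(p',p_\bullet)\to \CC(p',G^{\sf const})$ over the trivial homomorphism, hence a contractible Kan complex, so that $\CC(p',R_\bullet)\to \CC(p',1^{\sf const})=*$ is a trivial Kan fibration. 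Condition (2) is immediate, since $R_0\epi 1$ is a presentation.

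Finally, both $R_\bullet$ and the constant object $1^{\sf const}$ are simplicial projective resolutions of $1$, so by the independence-of-resolution statement in the Remark following Corollary \ref{cor_derived} we obtain $\pi_n(\Phi(R_\bullet))=\pi_n(\Phi(1^{\sf const}))$. The right-hand side is the homotopy of the constant simplicial abelian group $\Phi(1)^{\sf const}$, which equals $\Phi(1)$ in degree $0$ and $0$ in positive degrees, yielding the stated formulas. I expect the only genuinely non-formal step to be the degreewise freeness of $R_\bullet$: it rests entirely on Nielsen--Schreier, and this is exactly the ingredient that has no analogue for algebras (a subalgebra of a free algebra need not be free), consistent with the paper's later remark that this failure is ``the reason why cyclic homology exists''.
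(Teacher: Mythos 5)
Your proof is correct and follows essentially the same route as the paper's: the paper's argument is precisely that $R_\bullet={\rm Ker}(F_\bullet\epi G^{\sf const})$ is a free simplicial resolution of the trivial group, so Theorem \ref{th_derived} gives $\colim_n\Phi(R)=L_n\Phi(1)$, which is $\Phi(1)$ in degree $0$ and vanishes above. You have merely filled in the details the paper leaves implicit (extending the functor to ${\sf Proj}\!\downarrow\!G$, the Nielsen--Schreier input, and the fibre-of-a-trivial-Kan-fibration verification), all of which are accurate.
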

\begin{proof}
The kernel $R_\bullet={\sf Ker}(F_\bullet \epi G^{\sf const})$ is a free resolution of the trivial group. Then  Theorem \ref{th_derived} implies that $\colim_n \Phi(R)=L_n\Phi(1).$
\end{proof}

\begin{Theorem}\label{thm_group_homology} Let $M$ be a $G$-module. Then
$$H_{n+1}(G,M)= \colim_n\ H_1(F,M) $$
for any $n\geq 0.$ In particular,
$$H_{n+1}(G)= \colim_n\ F_{ab}. $$
\end{Theorem}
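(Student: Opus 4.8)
The plan is to turn the derived colimit into the homotopy of an explicit simplicial abelian group via Theorem \ref{th_derived}, and then to feed the inhomogeneous bar complex into the spectral sequence of Proposition \ref{prop_spectral_sequence_of_colimits_II}. Fix a simplicial projective (i.e. free) resolution $\varepsilon_\bullet\colon F_\bullet\to G^{\sf const}$, and for each $q\geq 0$ let $\MM_q\colon {\sf Pres}(G)\to {\sf Ab}$ be the functor $(F\epi G)\mapsto M\otimes_\ZZ \ZZ[F]^{\otimes q}$, where $F$ acts on $M$ through $F\epi G$. With the usual bar differentials these assemble into a complex of functors $\MM_\bullet$ whose value at $F$ is the bar complex computing group homology, so that $H_q(\MM_\bullet)(F)=H_q(F,M)$. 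Because $F$ is free it has a two-term free resolution of $\ZZ$ over $\ZZ[F]$, hence $H_q(\MM_\bullet)=0$ for $q\geq 2$; moreover $H_0(\MM_\bullet)(F)=M/I_FM=M/I_GM$ is the constant functor $M_G$ (the $F$-action factors through $G$), while $H_1(\MM_\bullet)=H_1(F,M)$ is the functor of interest.

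The technical heart is to show that each $\MM_q$ is $\colim$-acyclic with $\colim_0\MM_q=C_q(G;M)$. By Theorem \ref{th_derived}, $\colim_n\MM_q=\pi_n\bigl(M\otimes_\ZZ \ZZ[F_\bullet]^{\otimes q}\bigr)$. Since $\varepsilon_\bullet$ is a resolution, $F_\bullet\to G^{\sf const}$ is a weak equivalence of simplicial groups, so $|F_\bullet|$ is homotopy equivalent to the discrete set $G$; consequently the simplicial abelian group $\ZZ[F_\bullet]$ (levelwise free over $\ZZ$) has $\pi_*(\ZZ[F_\bullet])=\ZZ[G]$ concentrated in degree $0$. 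As all the factors are levelwise $\ZZ$-free, the Eilenberg--Zilber and K\"unneth theorems yield $\pi_*\bigl(M\otimes_\ZZ \ZZ[F_\bullet]^{\otimes q}\bigr)=M\otimes_\ZZ \ZZ[G]^{\otimes q}=C_q(G;M)$, again concentrated in degree $0$. This is the step I expect to be the main obstacle, as it is exactly the input that glues the ``free'' data $H_1(F,M)$ into homology of $G$; everything else is formal. The induced differentials identify the complex $\MM^0_\bullet=\colim_0\MM_\bullet$ with the bar complex $C_\bullet(G;M)$, so $H_*(\MM^0_\bullet)=H_*(G,M)$.

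Finally I would apply Proposition \ref{prop_spectral_sequence_of_colimits_II}, which by the acyclicity just proved gives a spectral sequence
$$E^2_{n,m}=\colim_n H_m(\MM_\bullet)\ \Rightarrow\ H_{n+m}(G,M).$$
By the computation of $H_m(\MM_\bullet)$ the only nonzero rows are $m=0$ and $m=1$: row $m=0$ is $\colim_n M_G^{\sf const}$, which by contractibility of ${\sf Pres}(G)$ (Proposition \ref{prop_colim_of_constant}) is $M_G$ for $n=0$ and $0$ otherwise, while row $m=1$ is $\colim_n H_1(F,M)$. All differentials vanish for degree reasons, so the sequence degenerates at $E^2$; reading off total degree $N\geq 1$ gives $H_N(G,M)=\colim_{N-1}H_1(F,M)$, i.e. $H_{n+1}(G,M)=\colim_n H_1(F,M)$ for $n\geq 0$. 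The special case follows by taking $M=\ZZ$ with trivial action, since $H_1(F,\ZZ)=F_{ab}$.
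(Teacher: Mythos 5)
Your proof is correct, and it rests on exactly the same key computation as the paper's: using Theorem~\ref{th_derived} to identify $\colim_*\,\ZZ[F]$ with $\pi_*(\ZZ[F_\bullet])=\ZZ[G]$ concentrated in degree $0$, and then the K\"unneth theorem for colimits (Proposition~\ref{prop_Kunneth_colim}) to propagate acyclicity to the tensor powers appearing in a bar-type complex. The one structural difference is in which complex you feed into the machine. The paper works with ${\bf P}_\bullet=C_\bullet(F,I(F)\otimes M)$, i.e.\ it dimension-shifts by the augmentation ideal so that the complex becomes a colim-acyclic resolution of $H_1(F,M)$ with homology concentrated in degree $0$, and then reads off $\colim_n H_1(F,M)=H_n(G,I(G)\otimes M)=H_{n+1}(G,M)$ directly from the acyclic-resolution principle. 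You instead keep the unshifted bar complex $M\otimes\ZZ[F]^{\otimes\bullet}$, whose homology occupies two rows ($M_G$ in degree $0$ and $H_1(F,M)$ in degree $1$), and invoke the spectral sequence of Proposition~\ref{prop_spectral_sequence_of_colimits_II}, which degenerates because the bottom row is the constant functor $M_G$ and Proposition~\ref{prop_colim_of_constant} kills its higher colimits. What your variant buys is that the only identification you need in homological degree $0$ is the (obvious) constancy of $M_F=M_G$, rather than the more delicate identification of $H_0(F,I(F)\otimes M)$ with $H_1(F,M)$ that the paper's shifted complex requires; the price is the (entirely routine) degeneration argument for the two-row spectral sequence. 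One cosmetic point: when you invoke K\"unneth for $M\otimes\ZZ[F_\bullet]^{\otimes q}$ you say ``all the factors are levelwise $\ZZ$-free,'' but $M$ need not be $\ZZ$-free; this is harmless since only one side of a K\"unneth pairing needs to be levelwise free (as in Proposition~\ref{prop_Kunneth_colim}), and $\ZZ[F_\bullet]^{\otimes q}$ is.
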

\begin{proof}
For a group $H$ and a $\ZZ[H]$-module $N$ we denote by $C_\bullet(H,N)$ the standard complex computing homology $H_n(H,N)$ such that $C_n(H,N)=\ZZ[H]^{\otimes n}\otimes N.$ Consider the complex of functors
${\bf P}_\bullet=C_\bullet(F,I(F)\otimes M)$ from ${\sf Proj} \downarrow G $ to ${\sf Ab},$ where $P_m$ is the functor that sends $F\to G$ to $ C_m(F,I(F)\otimes M)$ and $I(F)$ is the augmentation ideal of $\ZZ [F].$ Note that $H_0({\bf P}_\bullet)=H_0(F,I(F)\otimes M)=H_1(F,M).$

 Prove that ${\bf P}_\bullet$ is a $\colim$-acyclic resolution of the functor $H_1(F,M)$ in the category of functors
 ${\sf Proj} \downarrow G\to {\sf Ab}.$ First note that for $n\geq 1$ we have
 $$H_n({\bf P}_\bullet)=H_n(F,I(F)\otimes M)=H_{n+1}(F,M)=0,$$ 
 because $\ZZ[F]$ is hereditary (\cite[Cor. 6.2.7]{Weibel}).
 Then we only need to prove that ${\bf P}_n$ is colim-acyclic.  Take a free simplicial resolution $F_\bullet \overset{\sim}\epi G.$ Then $\pi_n(\ZZ[F_\bullet])=H_n(F_\bullet),$ where $F_\bullet$ is considered as a simplicial set. Since $F_\bullet \epi G^{\sf const}$ is a week equivalence of simplicial sets, this map induces an isomorphism on the level of homology of the simplicial sets.  Then $\pi_n(\ZZ[F_\bullet])=0$ for $n\ne 0$ and $\pi_0(\ZZ[F_\bullet])=\ZZ[G].$  Theorem \ref{th_derived} implies that
 $$ \colim_0\ \ZZ[F]=\ZZ[G], \hspace{1cm} \colim_n\ \ZZ[F]=0  $$
 for $n\ne 0.$ The the long exact sequence of colimits applied to the  short exact sequence $I(F)\mono \ZZ[F]\epi \ZZ$ implies
  $$ \colim_0\ I(F)=I(G), \hspace{1cm} \colim_n\ I(F)=0  $$
 for $n\ne 0.$
Combining this with K\"unneth theorem for colimits  (Proposition \ref{prop_Kunneth_colim}) we obtain
\begin{equation}\label{eq_C_m_in_proof}
 \colim_0\ C_m(F,I(F)\otimes M)=C_m(G,I(G)\otimes M), \hspace{1cm} \colim_n \ C_m(F,I(F)\otimes M)=0
\end{equation}
for $n\ne 0.$ In particular, ${\bf P}_n$ is acyclic and ${\bf P}_\bullet$ is an colim-acyclic resolution of $H_1(F,M).$

Since ${\bf P}_\bullet$ is a colim-acyclic resolution, we can use it for computing of $\colim_n H_1(F,M).$ Equation \eqref{eq_C_m_in_proof} implies that $\colim_0\ {\bf P}_\bullet=C_\bullet(G,I(G)\otimes M).$ Then the assertion follows from the formula $H_n(C_\bullet(G,I(G)\otimes M))=H_n(G,I(G)\otimes M)=H_{n+1}(G,M)$ for $n\geq 0.$
\end{proof}

\begin{Remark} If we take a short exact sequence $M_1\mono M_2\epi M_3$ of $G$-modules and a presentation $F\epi G,$ then we obtain an exact sequence
$$0 \to H_1(F,M_1)\to H_1(F,M_2)\to H_1(F,M_3) \to (M_1)_G\to (M_2)_G \to (M_3)_G \to 0.$$ If we apply the spectral sequence of colimits (Proposition \ref{prop_spectral_sequence_of_colimits}) to this exact sequence and use that $\colim_n\ H_1(F,M_i)=H_{n+1}(G,M_i),$  $\colim_n \ (M_i)_G=0$ for $n\geq 1,$ and $\colim_0 (M_i)_G=(M_i)_G,$ then we obtain the long exact sequence
$$  \dots \to H_n(G,M_1)\to H_n(G,M_2)\to H_n(G,M_3) \to H_{n-1}(G,M_1) \to \dots.$$
\end{Remark}

\section{\bf Hochschild and cyclic homology of unital algebras}

In this section we assume that all algebras are unital and associative. We assume that $k$ is a field and denote by ${\sf Alg}^u$ the category of unital algebras over $k$ and $\otimes=\otimes_k$. Effective epimorphisms in this category are surjective homomorphisms, and projective objects are retracts of free algebras. A free algebra $F$ is isomorphic to the tensor algebra $F\cong T(V)=\bigoplus_{n\geq 0} V^{\otimes n}.$ Then the objects of the category of presentations ${\sf Pres}(A)$ are surjective homomorphisms $F\epi A$ from retracts of free algebras. In this section we consider only colimits of functors of type ${\sf Pres}(A)\to {\sf Vect}.$

Here we fix two universes $U\in U'$  and assume that $A$ is $U$-small; a presentation $F\epi A$ is $U$-small; denote by ${\sf Alg}^u$ the category of $U$-small unital algebras; but denote by ${\sf Vect}$ the category of $U'$-small vector spaces.

Consider the subcategory $${\sf Pres^{free}}(A)\subseteq {\sf Pres}(A)$$ consisting of presentations $F\epi A,$ where $F$ is free. Remark \ref{rem_free} implies the isomorphism $$\underset{{\sf Pres}(A)}{\colim_*}\ \MM \cong \underset{{\sf Pres^{free}}(A)}{\colim_*}\ \MM $$
for any functor $\MM:{\sf Pres}(A)\to {\sf Mod}(k).$ So we can restrict ourselves by considering only such presentations $F\epi A,$ where $F$ is a free algebra, if a functor $\MM$ can be defined for all presentations.  For a presentation $F\epi A$ we set
$R={\rm Ker}(F\epi A).$

For an algebra $A$ and an $A$-bimodule $M$ we set
$$M_\natural:=M/[M,A]=HH_0(A,M).$$

For an algebra $A$ we set
$$A^e=A^{op} \otimes A.$$
If $M$ is an $A$-bimodule, we can consider it both as a left $A^e$-module via $(a\otimes b)*m=bma$ and as a right $A^e$-module via $m*(a\otimes b)=amb.$ Note that with this definition we have isomorphisms
$$ M\otimes_{A^e} N \cong N\otimes_{A^e} M\cong (M\otimes_A N)_\natural \cong (N\otimes_A M)_\natural$$
for any two $A$-bimodules $M,N.$

\subsection{$\OO$-modules and $\OO$-bimodules}

We denote by ${\sf Mod}^{\sf r}$ the category of couples $(M,B),$ where $B\in {\sf Alg^u}$  and $M$ is a right $B$-module. Morphisms in this category are couples $(f,\varphi):(M,B)\to (M',B'),$ where $ \varphi:B\to B'$ is a homomorphism of  algebras and $f:M\to M'$ is a homomorphism of $B$-modules, where $M'$ is considered as an $A$-module via $\varphi.$ There is an obvious forgetful functor
$$ {\sf Mod^r} \longrightarrow {\sf Alg^u}, \hspace{1cm} (M,B) \mapsto B.$$

Consider the forgetful functor
$$ \OO: {\sf Pres}(A) \longrightarrow {\sf Alg^u}, \hspace{1cm} \OO(F\epi A)=F. $$
A right $\OO$-module is a functor $\MM:{\sf Pres}(A) \to {\sf Mod}^{\sf r}$ such that the diagram
$$
\begin{tikzcd}
 & & {\sf Mod^{r}}\arrow[d] \\
{\sf Pres}(A) \arrow[rr,"\OO"] \arrow[urr,"\MM"] & & {\sf Alg^u}
\end{tikzcd}
$$
is commutative. An $\OO$-homomorphism is a natural transformation $\MM\to \MM',$ whose second component consists of identity homomorphisms ${\sf id}_F:F\to F.$ Thus we obtain a category of right $\OO$-modules ${\sf Mod^r}(\OO).$ Similarly one can define the category of left $\OO$-modules ${\sf Mod^l}(\OO)$ and the category of $\OO$-bimodules ${\sf Bimod}(\OO).$

 By abuse of notation, for an $\OO$-module $\MM$ we will identify $\MM( F\epi A)$ with the underlying $F$-module $\MM(F\epi A)=(\MM(F\epi A),F).$ Then $\MM(F\epi A)$ is a vector space together with a structure of $F$-module, which is ``natural by a presentation''. For an $\OO$-module $\MM$ we set
$$ \colim_n\ \MM:=\colim_n ( {\sf Pres}(A)  \xrightarrow{\MM} {\sf Mod^r} \to {\sf Vect}), $$
where  ${\sf Mod^r} \to {\sf Vect}$ is the forgetful functor $(M,B)\mapsto M$.

We claim that $\MM_0:=\colim_0\ \MM$ has a natural structure of $A$-module. Indeed, Theorem \ref{th_derived} implies that $\colim_0\ F=A,$ and the K\"unneth formula for colimits (Proposition \ref{prop_Kunneth_colim}) implies that $\colim_0\ F\otimes \MM(F\epi A)=A\otimes \MM_0$. Therefore, the natural transformation $ F\otimes \MM(F\epi A) \to \MM(F\epi A)$ induces a map $A\otimes \MM_0 \to \MM_0.$ It is easy to check that this map defines a structure of a module on $\MM_0.$ Therefore, we obtain a well-defined functor
$$ {\sf Colim}_0: {\sf Mod^r}(\OO) \longrightarrow {\sf Mod^r}(A),$$
and similarly
$${\sf Colim}_0: {\sf Mod^l}(\OO) \longrightarrow {\sf Mod^l}(A), \hspace{1cm} {\sf Colim}_0: {\sf Bimod}(\OO) \longrightarrow {\sf Bimod}(A). $$

\begin{Proposition} \label{prop_O-bimodules-tor}
Let $\MM$ and $\NN$ be colim-acyclic $\OO$-bimodules. Set $\MM_0={\sf Colim}_0\ \MM$ and $\NN_0={\sf Colim}_0 \ \NN.$ Then there is a spectral sequence of homological type
$$
E^2_{n,m}=\colim_n {\sf Tor}^{F^e}_m(\MM,\NN) \Rightarrow {\sf Tor}^{A^e}_{n+m}(\MM_0,\NN_0).
$$
\end{Proposition}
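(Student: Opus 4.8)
The plan is to fix a free simplicial resolution $F_\bullet \epi A$ of the algebra $A$, convert all the derived colimits appearing in the statement into simplicial homotopy groups by means of Theorem \ref{th_derived}, and then package everything into a single bicomplex built from the bar construction that computes $\mathrm{Tor}$ over an enveloping algebra. Concretely, for a $k$-algebra $B$ and $B$-bimodules $M,N$ let $\mathrm{Bar}_\bullet(M,B^e,N)$ denote the two-sided bar complex with $\mathrm{Bar}_p = M\otimes (B^e)^{\otimes p}\otimes N$; it is functorial in the triple $(M,B,N)$, built entirely out of tensor products, each term is flat over the field $k$, and it computes ${\sf Tor}^{B^e}_*(M,N)$. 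Applying it level-wise over the resolution I would form the double complex $D_{n,p} = \MM(F_n)\otimes (F_n^e)^{\otimes p}\otimes \NN(F_n)$, where the $n$-direction carries the simplicial differential inherited from $F_\bullet$ and the $p$-direction carries the bar differential. Since the bar differential is natural, both directions are maps of simplicial vector spaces, so $D$ is an honest bicomplex and I may compare its two spectral sequences.

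Computing homology first in the bar ($p$) direction yields $H^{\mathrm{bar}}_p(D_{n,\bullet}) = {\sf Tor}^{F_n^e}_p(\MM(F_n),\NN(F_n))$, and then Theorem \ref{th_derived} identifies the $\pi_n$ of this simplicial object with $\colim_n {\sf Tor}^{F^e}_m(\MM,\NN)$; this is the desired $E^2$-page. For the opposite order I fix $p$ and must compute $\pi_n\big(\MM(F_\bullet)\otimes (F_\bullet^e)^{\otimes p}\otimes \NN(F_\bullet)\big)$. The decisive claim is that this is concentrated in degree $0$ with value $\MM_0 \otimes (A^e)^{\otimes p}\otimes \NN_0 = \mathrm{Bar}_p(\MM_0,A^e,\NN_0)$: by Theorem \ref{th_derived} the colim-acyclicity of $\MM$ and $\NN$ gives $\pi_*(\MM(F_\bullet)) = \MM_0$ and $\pi_*(\NN(F_\bullet)) = \NN_0$ in degree $0$, while $\pi_*(F_\bullet^e) = A^e$ in degree $0$ follows from $\colim_* F = A$ together with the K\"unneth theorem for derived colimits (Proposition \ref{prop_Kunneth_colim}); a further application of K\"unneth for simplicial vector spaces (Eilenberg--Zilber, with all $\mathrm{Tor}$-terms vanishing over the field $k$) then collapses the whole tensor product into degree $0$. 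Hence the second spectral sequence is supported in the row $n=0$, where it computes $H^{\mathrm{bar}}_p(\mathrm{Bar}_\bullet(\MM_0,A^e,\NN_0)) = {\sf Tor}^{A^e}_p(\MM_0,\NN_0)$, so the total homology of $D$ is ${\sf Tor}^{A^e}_{n+m}(\MM_0,\NN_0)$, which is exactly the abutment in the statement.

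The main obstacle is precisely this collapse of the second spectral sequence, namely the verification that the level-wise tensor products have homotopy concentrated in degree zero. This is where the hypothesis that $\MM$ and $\NN$ are colim-acyclic is indispensable: without it $\pi_*(\MM(F_\bullet))$ and $\pi_*(\NN(F_\bullet))$ would carry higher terms, the K\"unneth splitting would no longer terminate in degree $0$, and the clean abutment ${\sf Tor}^{A^e}_{n+m}(\MM_0,\NN_0)$ would be replaced by a hyper-$\mathrm{Tor}$. The remaining points — functoriality and flatness of the bar terms, the fact that both differentials are simplicial maps so that $D$ is a genuine bicomplex, and the bookkeeping identifying which order of homology produces which page — are routine once Theorem \ref{th_derived} and Proposition \ref{prop_Kunneth_colim} are in hand.
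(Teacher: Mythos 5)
Your proof is correct in substance and shares the paper's skeleton: both arguments form the two-sided bar complex with terms $\MM\otimes (F^e)^{\otimes m}\otimes\NN$, use the K\"unneth theorem for derived colimits (Proposition \ref{prop_Kunneth_colim}) together with $\colim_*\, F=A$ and the colim-acyclicity of $\MM,\NN$ to see that each term has derived colimits concentrated in degree zero with value $\MM_0\otimes(A^e)^{\otimes m}\otimes\NN_0$, and then play two spectral sequences of a double complex against each other. The difference is where the ``colimit direction'' lives: the paper stays inside the functor category over ${\sf Pres}(A)$ and invokes the hyperhomology spectral sequence of $\colim$ (Proposition \ref{prop_spectral_sequence_of_colimits_II}), whereas you realize that direction concretely by evaluating on a simplicial projective resolution $F_\bullet\to A$ and translating with Theorem \ref{th_derived}. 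Your version is more hands-on, but it carries a hidden hypothesis: Theorem \ref{th_derived} is stated for functors that extend to ${\sf Proj}\downarrow A$, and the paper explicitly remarks that the underlying Proposition \ref{prop_quillen_derived} does not work over ${\sf Pres}(A)$ alone. So to identify $\pi_*(\MM(F_\bullet))$ with $\colim_*\,\MM$, and $\pi_n$ of your $E^1$-page with $\colim_n\,{\sf Tor}^{F^e}_m(\MM,\NN)$, you need the $\OO$-bimodules (hence the whole bicomplex) to be restrictions of functors defined on ${\sf Proj}\downarrow A$. That is true of every $\OO$-bimodule actually used in the paper, but the proposition is stated for arbitrary colim-acyclic $\OO$-bimodules, and the paper's route through Proposition \ref{prop_spectral_sequence_of_colimits_II} needs no such extension. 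Replacing your resolution direction by the nerve complex $C_\bullet({\sf Pres}(A),-)$ turns your argument into the paper's verbatim and removes the caveat.
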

\begin{proof}
For an algebra $B$ we denote by ${\sf Bar}_\bullet(B)$ the standard bar resolution of the bimodule $B$, where
$ {\sf Bar}_n(B)= B^{\otimes n+2} $ and
$$d(b_0\otimes \dots \otimes b_{n+1})=\sum (-1)^i b_0\otimes \dots \otimes b_{i}b_{i+1} \otimes \dots \otimes b_{n+1}.$$
For a right $B$-module $L$ and a left $B$-module $L'$ we set
$$C^B_\bullet(L,L')=L\otimes_B {\sf Bar}_\bullet(B) \otimes_B L'.$$ It is easy to check that
$$H_n(C^B_\bullet(L,L'))={\sf Tor}^B_n(L,L'),$$
$$C^B_n(L,L')=L\otimes B^{\otimes n}\otimes L'$$
and the differential is given by the same formula as in ${\sf Bar}_\bullet(B).$

We consider the following complex of functors from ${\sf Pres}(A)$ to ${\sf Vect}$
$${\bf C}_\bullet=C^{F^e}_\bullet(\MM,\NN).$$ Note that ${\bf C}_n=\MM \otimes F^{\otimes 2n}\otimes \NN$ and $ H_n({\bf C}_\bullet)={\sf Tor}^{F^e}_n(\MM,\NN).$ The K\"unneth formula for colimits together with isomorphism $\colim_n\ \MM=\colim_n\ \NN=0$ for $n\ne 0$ implies
$$ \colim_n\ {\bf C}_\bullet= 0, \hspace{1cm} \colim_0\ {\bf C}_\bullet=C^{A^e}_\bullet(\MM_0,\NN_0)   $$ for $n\ne 0.$ Then the assertion follows from Proposition \ref{prop_spectral_sequence_of_colimits_II}.
\end{proof}

\begin{Corollary}\label{cor_one-O-bimod}
Let $\MM$ be a colim-acyclic $\OO$-bimodule. Set $\MM_0={\sf Colim}_0\ \MM.$ Then there is a natural by $\MM$ long exact sequence
$$
\begin{tikzcd}
\colim_{n-1} \ H_1(F,\MM)\arrow[r] & H_n(A,\MM_0)\arrow[r] & \colim_n\: \MM_\natural  \arrow[ddll,out=315, in=135]  \\ &  & \\
\colim_{n-2} \ H_1(F,\MM) \arrow[r] &  H_{n-1}(A,\MM_0) \arrow[r] & \colim_{n-1}\: \MM_\natural.
\end{tikzcd}
$$
\end{Corollary}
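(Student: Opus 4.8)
The plan is to specialize Proposition \ref{prop_O-bimodules-tor} to the pair $(\MM,\NN)$ with $\NN=\OO$, where $\OO$ is viewed as an $\OO$-bimodule through the two-sided multiplication of each $F$ on itself. Before applying it I would verify the two hypotheses on $\NN$. First, $\OO$ is colim-acyclic: by Corollary \ref{cor_derived} the underlying functor $F\mapsto F$ satisfies $\colim_n\OO=\pi_n(p_\bullet)$ for a simplicial free resolution $\varepsilon_\bullet\colon p_\bullet\to A^{\sf const}$, and since such a resolution is a weak equivalence of the underlying simplicial vector spaces we get $\pi_n(p_\bullet)=0$ for $n\geq 1$ and $\pi_0(p_\bullet)=A$. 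Hence $\colim_n\OO=0$ for $n\neq 0$ and ${\sf Colim}_0\,\OO=A$ as an $A$-bimodule, so $\NN_0=A$. Proposition \ref{prop_O-bimodules-tor} then yields a spectral sequence, natural in $\MM$,
$$ E^2_{n,m}=\colim_n\,{\sf Tor}^{F^e}_m(\MM,\OO)\ \Longrightarrow\ {\sf Tor}^{A^e}_{n+m}(\MM_0,A). $$

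Next I would identify all the entries with (co)limits of Hochschild homology. By the very definition of Hochschild homology with coefficients in a bimodule (and the symmetry $M\otimes_{F^e}N\cong N\otimes_{F^e}M$ recorded above), ${\sf Tor}^{F^e}_m(\MM,\OO)=H_m(F,\MM)$, and likewise the abutment is ${\sf Tor}^{A^e}_{n+m}(\MM_0,A)=H_{n+m}(A,\MM_0)$. In the bottom row $H_0(F,\MM)=\MM/[\MM,F]=\MM_\natural$, so $E^2_{n,0}=\colim_n\MM_\natural$, while $E^2_{n,1}=\colim_n H_1(F,\MM)$.

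The decisive step is to show that the spectral sequence lives in only the two rows $m=0$ and $m=1$. By Remark \ref{rem_free} the colimits over ${\sf Pres}(A)$ agree with those over ${\sf Pres^{free}}(A)$, so it suffices to check that $H_m(F,\MM)=0$ for $m\geq 2$ when $F$ is a genuine free (tensor) algebra; this is immediate from the short exact sequence of $F$-bimodules $0\to\Omega(F)\to F\otimes F\to F\to 0$, which for free $F$ is a projective bimodule resolution of $F$ of length $1$. Consequently $E^2_{n,m}=0$ for $m\geq 2$, and a homological spectral sequence concentrated in rows $0$ and $1$ collapses after the $d_2$-differentials into a long exact sequence
$$ \cdots\to E^2_{n-1,1}\to H_n(A,\MM_0)\to E^2_{n,0}\xrightarrow{\ d_2\ }E^2_{n-2,1}\to H_{n-1}(A,\MM_0)\to E^2_{n-1,0}\to\cdots. $$
Substituting the identifications of the previous paragraph gives precisely the asserted six-term exact sequence, and its naturality in $\MM$ is inherited from that of the spectral sequence. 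The main obstacle is thus not the homological bookkeeping but the two-row concentration: everything hinges on the vanishing $H_m(F,\MM)=0$ for $m\ge 2$, i.e.\ on the fact that free algebras have Hochschild dimension one, which is exactly why we may pass to ${\sf Pres^{free}}(A)$ via Remark \ref{rem_free}.
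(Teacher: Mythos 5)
Your proposal is correct and follows essentially the same route as the paper: the paper's proof also specializes Proposition \ref{prop_O-bimodules-tor} by taking the second $\OO$-bimodule to be $F$ itself and then uses $H_m(F,-)=0$ for $m\geq 2$ and $H_0(F,-)=(-)_\natural$ to collapse the two-row spectral sequence into the stated long exact sequence. The only difference is that you spell out the supporting checks (colim-acyclicity of $\OO$ via Corollary \ref{cor_derived}, the identification of the Tor terms, and the reduction to genuinely free $F$ via Remark \ref{rem_free}) which the paper leaves implicit.
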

\begin{proof}
It follows from  Proposition \ref{prop_O-bimodules-tor} if we take one of the $\OO$-bimodules equal to $F$ and use that $H_n(F,-)=0$ for $n\geq 2$ and $H_0(F,-)=(-)_\natural$  \cite[Prop 9.1.6]{Weibel}.
\end{proof}

\begin{Corollary}\label{cor_tensor_product_O_mod}
Let $\MM$ be a right $\OO$-module and  $\NN$ be a left $\OO$-module. Assume that $\MM$ and $\NN$ are colim-acyclic. Set $\MM_0={\sf Colim}_0\ \MM $ and $\NN_0={\sf Colim}_0 \ \NN.$ Then there is a long  exact sequence
$$
\begin{tikzcd}
\colim_{n-1} H_1(F, \NN\otimes \MM) \arrow[r] & H_n(A,\NN_0\otimes\MM_0) \arrow[r] &     \colim_n\ \MM\otimes_F \NN \arrow[ddll,out=315, in=135]  \\ &  & \\
\colim_{n-2} H_1(F, \NN\otimes \MM) \arrow[r] &  H_{n-1}(A,\NN_0\otimes\MM_0) \arrow[r] & \colim_{n-1}\ \MM\otimes_F \NN.
\end{tikzcd}
$$
\end{Corollary}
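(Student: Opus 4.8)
The plan is to deduce the statement by applying Corollary \ref{cor_one-O-bimod} to the $\OO$-bimodule $\NN\otimes\MM$, whose value on a presentation $F\epi A$ is the $F$-bimodule $\NN(F)\otimes\MM(F)$, with $F$ acting on the left tensor factor through the $\OO$-module structure of $\NN$ and on the right factor through that of $\MM$. Functoriality of this external tensor product over ${\sf Pres}(A)$ makes $\NN\otimes\MM$ into a genuine $\OO$-bimodule. If I can show that $\NN\otimes\MM$ is colim-acyclic with ${\sf Colim}_0\ (\NN\otimes\MM)=\NN_0\otimes\MM_0$ and that there is a natural isomorphism $(\NN\otimes\MM)_\natural\cong\MM\otimes_F\NN$, then the long exact sequence of Corollary \ref{cor_one-O-bimod} for this bimodule becomes, term by term, exactly the sequence asserted here: the coefficient module $\MM_0$ of the corollary specializes to $\NN_0\otimes\MM_0$, the functor $\MM_\natural$ specializes to $\MM\otimes_F\NN$, and the $H_1(F,-)$ terms carry over verbatim.

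First I would verify colim-acyclicity. The underlying ${\sf Vect}$-valued functor of $\NN\otimes\MM$ sends a presentation $c$ to $\NN(c)\otimes\MM(c)$, so it is the functor to which the K\"unneth theorem for derived colimits (Proposition \ref{prop_Kunneth_colim}) applies with $R=k$. Since $k$ is a field, every value of $\MM$ is flat and the ${\sf Tor}_1^k$-terms vanish, so $\colim_n\ (\NN\otimes\MM)\cong\bigoplus_{i+j=n}(\colim_i\ \NN)\otimes(\colim_j\ \MM)$. As $\MM$ and $\NN$ are colim-acyclic, only the summand with $i=j=0$ survives, whence $\colim_n\ (\NN\otimes\MM)=0$ for $n\ne0$ and $\colim_0\ (\NN\otimes\MM)=\NN_0\otimes\MM_0$. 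Tracking this identification through the construction of the ${\sf Colim}_0$-bimodule structure, which is built from $\colim_0\ F=A$ exactly as in the definition of ${\sf Colim}_0$, shows that the $A$-bimodule $\NN_0\otimes\MM_0$ obtained (left action on $\NN_0$, right action on $\MM_0$) is the one occurring in the target sequence.

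Next I would identify $(\NN\otimes\MM)_\natural$ with $\MM\otimes_F\NN$. Writing $P=\NN(F)\otimes\MM(F)$ for the $F$-bimodule attached to a fixed presentation, the subspace $[P,F]$ is spanned by the elements $n\otimes mf-fn\otimes m$; hence the assignment $n\otimes m\mapsto m\otimes n$ descends to an isomorphism $P_\natural\xrightarrow{\ \cong\ }\MM(F)\otimes_F\NN(F)$, with inverse $m\otimes n\mapsto n\otimes m$. This is natural in the presentation, so it assembles into an isomorphism of functors $(\NN\otimes\MM)_\natural\cong\MM\otimes_F\NN$ on ${\sf Pres}(A)$. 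Substituting $\NN_0\otimes\MM_0$ for the coefficient module and $\MM\otimes_F\NN$ for $(\NN\otimes\MM)_\natural$ into the long exact sequence of Corollary \ref{cor_one-O-bimod} then yields the asserted sequence. The only real care required is bookkeeping, namely checking that the isomorphism ${\sf Colim}_0\ (\NN\otimes\MM)\cong\NN_0\otimes\MM_0$ respects the $A$-bimodule structure and not merely the underlying vector space; this is a formal consequence of the compatibility of the K\"unneth isomorphism with the actions induced by $\colim_0\ F=A$, and there is no homological obstacle beyond the inputs already furnished by Corollary \ref{cor_one-O-bimod} and Proposition \ref{prop_Kunneth_colim}.
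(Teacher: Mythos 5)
Your proof is correct and follows exactly the paper's route: the paper's own argument is the one-line observation that the statement follows from Corollary \ref{cor_one-O-bimod} applied to the $\OO$-bimodule $\NN\otimes\MM$, together with the identification $(\NN\otimes\MM)_\natural\cong\MM\otimes_F\NN$ and the K\"unneth formula for colimits (Proposition \ref{prop_Kunneth_colim}) to get colim-acyclicity and ${\sf Colim}_0(\NN\otimes\MM)=\NN_0\otimes\MM_0$. You have simply spelled out the same steps in more detail, and the details you supply are all accurate.
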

\begin{proof}
This follows from Corollary \ref{cor_one-O-bimod} together with the isomorphism $(\NN\otimes \MM)_\natural=\MM\otimes_F \NN$ and the K\"unneth formula for colimits.
\end{proof}

\begin{Lemma}\label{lemma_R^n} For any $n$ and $1\leq m\leq  l$ we have
$$ \colim_n\ R^m=0, \hspace{1cm} \colim_n\ R^m/R^l=0.$$
\end{Lemma}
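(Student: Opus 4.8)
The plan is to establish the stronger vanishing $\colim_n\: R^m=0$ for all $n$ and all $m\ge 1$; the statement for $R^m/R^l$ is then immediate, since the long exact sequence of derived colimits attached to the short exact sequence $R^l\mono R^m\epi R^m/R^l$ has both outer terms vanishing in every degree. I proceed by induction on $m$.

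For $m=1$ I apply the long exact sequence of colimits to the short exact sequence of $\OO$-bimodules $R\mono F\epi A$. The constant functor $A$ has $\colim_0\: A=A$ and $\colim_{\ge 1}\: A=0$ by Proposition \ref{prop_colim_of_constant}, while $\colim_0\: F=A$ and $\colim_{\ge 1}\: F=0$ by Theorem \ref{th_derived} applied to a free simplicial resolution. Since the induced map $\colim_0\: F\to \colim_0\: A$ is an isomorphism, the sequence forces $\colim_n\: R=0$ for all $n$. For the inductive step, assuming $\colim_n\: R^{m-1}=0$, the long exact sequence for $R^m\mono R^{m-1}\epi R^{m-1}/R^m$ reduces the vanishing of $\colim_n\: R^m$ to that of $\colim_n(R^{m-1}/R^m)$. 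Hence everything comes down to proving $\colim_n(R^j/R^{j+1})=0$ for every $j\ge 1$.

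The case $j=1$ is the model computation. First, the short exact sequence $\Omega(F)\mono F\otimes F\epi F$ together with the K\"unneth formula for colimits (Proposition \ref{prop_Kunneth_colim}) and the values of $\colim_*\: F$ shows that $\Omega(F)$ is colim-acyclic with $\colim_0\:\Omega(F)=\Omega(A):={\rm Ker}(A\otimes A\epi A)$. I then use the noncommutative conormal short exact sequence of $\OO$-bimodule valued functors
$$0\longrightarrow R/R^2\longrightarrow \Omega(F)\otimes_{F^e}A^e\longrightarrow \Omega(A)\longrightarrow 0,$$
valid because $F$ is free. The middle term I evaluate with the Tor spectral sequence of Proposition \ref{prop_O-bimodules-tor}, applied to the colim-acyclic bimodules $\Omega(F)$ and the constant bimodule $A^e$: since $\Omega(F)$ is $F^e$-free, ${\sf Tor}^{F^e}_m(\Omega(F),A^e)$ is concentrated in $m=0$, so the spectral sequence degenerates to $\colim_n(\Omega(F)\otimes_{F^e}A^e)={\sf Tor}^{A^e}_n(\Omega(A),A^e)$, which equals $\Omega(A)$ for $n=0$ and $0$ otherwise because $A^e$ is $A^e$-free. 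As $\colim_0$ of the surjection $\Omega(F)\otimes_{F^e}A^e\epi\Omega(A)$ is then an isomorphism and both functors are colim-acyclic, the long exact sequence of the conormal sequence yields $\colim_*(R/R^2)=0$, and hence (via $R^2\mono R\epi R/R^2$) also $\colim_*\: R^2=0$.

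The \emph{main obstacle} is the graded pieces $R^j/R^{j+1}$ for $j\ge 2$. My plan is to treat them by the same mechanism: resolve each $R^j/R^{j+1}$ by free $\OO$-bimodules built from $\Omega(F)$ — a higher, Koszul-type conormal resolution generalizing the sequence above — and then evaluate the resulting colimits through Proposition \ref{prop_O-bimodules-tor}, reducing them to groups ${\sf Tor}^{A^e}_*(-,A^e)$ with a free second argument, which are concentrated in degree $0$. A cleaner alternative, when available, is the identification $R^j/R^{j+1}\cong (R/R^2)^{\otimes_A j}$ coming from the associated graded algebra ${\rm gr}_R(F)$ being the tensor algebra $T_A(R/R^2)$, a consequence of the freeness of $F$; since $R/R^2$ is annihilated by $R$, this tensor power may be computed with $\otimes_F$ and the colimit-tensor machinery (Corollary \ref{cor_tensor_product_O_mod}, Proposition \ref{prop_Kunneth_colim}), propagating $\colim_0(R/R^2)=0$ to the vanishing of all higher tensor powers. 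Verifying that these higher conormal resolutions exist and that the associated Tor spectral sequence collapses exactly as in the case $j=1$ — equivalently, that the relevant Hochschild homology of $A$ with induced coefficients vanishes above degree $0$ — is the delicate point on which the whole induction hinges.
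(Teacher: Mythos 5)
Your base case $m=1$ matches the paper, and your computation of $\colim_*\,R/R^2=0$ via the conormal sequence $R/R^2\mono \Omega(F)\otimes_{F^e}A^e\epi\Omega(A)$ is a legitimate (if roundabout) way to get $\colim_*\,R^2=0$. But the proof is incomplete exactly where you say it is: the graded pieces $R^j/R^{j+1}$ for $j\geq 2$ are never handled. The ``higher Koszul-type conormal resolutions'' are not constructed, and the claimed collapse of the associated spectral sequence is not verified. The cleaner alternative does not close the induction either: writing $R^j/R^{j+1}\cong R^{j-1}/R^j\otimes_F R/R^2$ and feeding this into Corollary \ref{cor_tensor_product_O_mod} leaves you with the term $H_1(F,\,R/R^2\otimes R^{j-1}/R^j)\cong {\sf Tor}_1^F(R^{j-1}/R^j,\,R/R^2)$, and a direct computation with the projective resolution $R^j\mono R^{j-1}\epi R^{j-1}/R^j$ shows this Tor group is isomorphic to $R^{j+1}/R^{j+2}$, which is nonzero in general. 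So the mechanism that made $j=1$ work (vanishing of the relevant $H_1(F,-)$) simply fails for $j\geq 2$, and the argument does not go through as planned.

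The paper's proof avoids graded pieces altogether and is much shorter. Since $F$ is hereditary, ${\sf Tor}_2^F(F/\mathfrak a,F/\mathfrak b)=0$, and the exact sequence ${\sf Tor}_2^F(F/\mathfrak a,F/\mathfrak b)\mono \mathfrak a\otimes_F\mathfrak b\epi \mathfrak{ab}$ gives $R\otimes_F R^m\cong R^{m+1}$. Moreover $R$ and $R^m$ are projective as one-sided $F$-modules (submodules over a hereditary ring), so the external tensor $R^m\otimes R$ is a projective $F$-bimodule and $H_1(F,R^m\otimes R)=0$. Now Corollary \ref{cor_tensor_product_O_mod} applied to the colim-acyclic modules $\MM=R^m$, $\NN=R$ (with $\MM_0=\NN_0=0$ by induction and the base case) yields $\colim_n\,R^{m+1}=\colim_n\,(R^m\otimes_F R)\cong\colim_{n-2}H_1(F,R\otimes R^m)=0$. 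If you want to keep your structure, this is the ingredient you are missing: replace the graded pieces by the powers themselves and use the projectivity of $R^m\otimes R$ to kill the $H_1$ obstruction.
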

\begin{proof}
The second isomorphism follows from the first one and the short exact sequence $R^l\mono R^m \epi R^m/R^l.$

Prove the first equality. The proof is by induction. For $m=1$ it follows from the fact that the map $F\epi A$ induces an isomorphism on the level of colimits $\colim_*\ F=\colim_*\ A$ (which follows from  Theorem \ref{th_derived}). Do the step. First note that for any two ideals $\mathfrak{a},\mathfrak{b}$ of a ring $\Lambda$ there is a short exact sequence ${\sf Tor}_2^\Lambda(\Lambda/\mathfrak{a},\Lambda/\mathfrak{b})\mono \mathfrak{a}\otimes_\Lambda \mathfrak{b} \epi \mathfrak{ab}$ (see Exercise 19 of Chapter VI in  \cite{CartanEilenberg}).  It follows that  $R\otimes_F R^m=R^{m+1}.$ Moreover, $R,R^m$ are projective as $F$-modules (right and left) because $F$ is hereditary \cite[Prop 9.1.6]{Weibel}. It follows that $R^m \otimes R$ is a projective bimodule. Then the assertion follows from Corollary \ref{cor_tensor_product_O_mod}.
\end{proof}

\subsection{Hochschild homology}

Consider the kernel of the multiplication map
$$\Omega(A)={\rm Ker}(A\otimes A\epi A).$$ Since the beginning of the bar resolution is exact $A^{\otimes 3} \to A^{\otimes 2} \to A,$ the map $A^{\otimes 3}\to \Omega(A)$ given by $a\otimes b\otimes c\mapsto ab\otimes c - a\otimes bc$ is an epimorphism. Hence, any element of $\Omega(A)$ can be presented as a sum of elements of the form $ab\otimes c - a\otimes bc.$ If we identify $A\otimes A=A^e,$ then $\Omega(A)$ is a right ideal of $A^e$ but it is not necessarily a left ideal.
The bimodule $\Omega(A)$ is known as the bimodule of noncommutative differential forms and the map
$$ d:A\longrightarrow \Omega(A), \hspace{1cm} d(a)=a\otimes 1 - 1\otimes a $$
is the universal derivation. Here we always consider $A\otimes A$ as a bimodule with the following structure $a'(a\otimes b)b'=a'a\otimes bb'.$ If we consider the long exact sequence of ${\sf Tor}^{A^e}_*(M,-)$ applied to the short exact sequence $\Omega(A)\mono A^e \epi A$ of right $A^e$-modules, we obtain a short exact sequence
\begin{equation}\label{eq_h1_def}
0 \longrightarrow H_1(A,M) \longrightarrow \Omega(A)\otimes_{A^e} M \overset{\alpha_M}\longrightarrow M \longrightarrow M_\natural \longrightarrow 0.
\end{equation}
Detailed understanding of the map $\alpha_M$ will be important.
If we denote by  $i_A:\Omega(A)\mono A\otimes A $ the embedding, then $\alpha_M:\Omega(A)\otimes_{A^e}M\to M$ can be written as
$$\alpha_M=i_A\tilde \otimes 1_M,$$ where $i_A\tilde \otimes 1_M$ is the composition of the map $i_A\otimes 1_M : \Omega(A)\otimes_{A^e} M \to A^e \otimes_{A^e}M $ and the isomorphism $A^e\otimes_{A^e}M\cong M$ given by $a\otimes b \otimes m \mapsto bma.$ Therefore, $$\alpha_M((ab\otimes c - a\otimes bc)\otimes m)=cmab-bcma=[cma,b].$$
In particular, we have
$$0 \longrightarrow HH_1(A) \longrightarrow \Omega(A)_\natural \overset{\alpha}\longrightarrow A \longrightarrow A_\natural \longrightarrow 0, $$
such that $\alpha(ab\otimes c - a\otimes bc+[\Omega(A),A])=[ca,b].$

\begin{Theorem}\label{theorem_hochsch} For any $A$-bimodule $M$ there is an isomorphism
$$H_{n+1}(A,M)\cong \colim_n\ H_1(F,M)$$
for $n\geq 0,$ and isomorphism
$$H_{n+1}(A,M)\cong \colim_n\  \Omega(F)\otimes_{F^e} M$$
for $n\geq 1,$
and  isomorphisms
$$ HH_{n+1}(A)\cong \colim_n\ \Omega(F)_\natural  = \colim_n \ \Omega(F)\otimes_{F^e} A $$
for $n\geq 1$. Moreover, the morphisms
$$H_1(F,A)\mono \Omega(F)\otimes_{F^e} A \twoheadleftarrow \Omega(F)_\natural $$ induce  isomorphisms
$$ \colim_n\ H_1(F,A) \cong \colim_n \  \Omega(F)\otimes_{F^e} A \cong  \colim_n\ \Omega(F)_\natural$$
for $n\geq 1,$ which are compatible with the above isomorphisms,  and the following on the level of zero colimits
$$ \colim_0\ H_1(F,A)\cong HH_1(A) \mono \Omega(A)_\natural \cong  \colim_0\ \Omega(F)\otimes_{F^e} A =\colim_0\ \Omega(F)_\natural.$$
\end{Theorem}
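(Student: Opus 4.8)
The plan is to build everything on the short exact sequence \eqref{eq_h1_def} and its refinements, promoting them to sequences of $\OO$-bimodules and then running the long exact sequence of derived colimits. First I would record that, since $F$ is free (hence $F^e$-projective bimodule $\Omega(F)$ is projective on both sides), the functor $M \mapsto \Omega(F)\otimes_{F^e}M$ is exact and the Hochschild homology $H_n(F,-)$ vanishes for $n\geq 2$; this is exactly the input used in Corollary \ref{cor_one-O-bimod}. The key observation is that $\Omega(F)$, viewed as the assignment $(F\epi A)\mapsto \Omega(F)$, is an $\OO$-bimodule, and by Lemma \ref{lemma_R^n} together with the short exact sequence $\Omega(F)\mono F^e\epi F$ of $\OO$-bimodules (and $\colim_n F^e=0$, $\colim_n F=0$ for $n\geq 1$ via the K\"unneth formula and Theorem \ref{th_derived}), one obtains that $\Omega(F)$ is colim-acyclic and ${\sf Colim}_0\,\Omega(F)=\Omega(A)$.

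Next I would apply Corollary \ref{cor_one-O-bimod} to the constant-coefficient $\OO$-bimodule $\MM$ obtained from a fixed $A$-bimodule $M$ (regarded as an $F$-bimodule through $F\epi A$). The corollary's long exact sequence reads
$$\colim_{n-1}H_1(F,M)\to H_n(A,M)\to \colim_n M_\natural \to \colim_{n-2}H_1(F,M)\to\cdots,$$
so everything reduces to computing $\colim_n M_\natural$. Here $M_\natural=M/[M,F]$ depends on the presentation only through the $F$-action, and since that action factors through $A$, I expect $M_\natural$ to be constant as a functor on ${\sf Pres}(A)$ (equal to $M/[M,A]=H_0(A,M)$ independently of $F$). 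By Proposition \ref{prop_colim_of_constant} applied to the contractible category ${\sf Pres}(A)$, this forces $\colim_n M_\natural=0$ for $n\geq 1$ and $\colim_0 M_\natural=M_\natural$. Feeding this vanishing back into the long exact sequence collapses it to the desired isomorphism $\colim_n H_1(F,M)\cong H_{n+1}(A,M)$ for $n\geq 0$, with the low-degree edge $\colim_0 H_1(F,M)\cong H_1(A,M)$ read off from the tail.

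For the second isomorphism, I would use the $\OO$-module version of \eqref{eq_h1_def}: the four-term exact sequence $H_1(F,M)\mono \Omega(F)\otimes_{F^e}M \to M \to M_\natural$, split into two short exact sequences of functors. Applying the spectral sequence of colimits (Proposition \ref{prop_spectral_sequence_of_colimits}) and using $\colim_n M=0$, $\colim_n M_\natural=0$ for $n\geq 1$ shows $\colim_n H_1(F,M)\cong \colim_n \Omega(F)\otimes_{F^e}M$ for $n\geq 1$, which combined with the first part gives the $\Omega$-description. The case $HH_{n+1}(A)$ is the specialization $M=A$, where additionally $\Omega(F)\otimes_{F^e}A\cong\Omega(F)_\natural$ by the general identity $N\otimes_{F^e}A\cong N_\natural$ recalled in the section; the displayed natural maps $H_1(F,A)\mono\Omega(F)\otimes_{F^e}A\twoheadleftarrow\Omega(F)_\natural$ become isomorphisms after $\colim_n$ for $n\geq 1$ because their kernels/cokernels are built from the colim-acyclic pieces $A$ and $A_\natural$. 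The low-degree statement is the edge map of the same long exact sequence at $n=0$, where $HH_1(A)\mono\Omega(A)_\natural$ is precisely \eqref{eq_h1_def} for $M=A$.

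\textbf{Main obstacle.} The step I expect to require the most care is verifying that $M_\natural$ (and, for the $\Omega$-statement, the coefficient functor $M$ itself) is genuinely \emph{constant} on ${\sf Pres}(A)$ rather than merely having constant colimit, so that Proposition \ref{prop_colim_of_constant} applies and kills all higher colimits; this hinges on the $F$-action factoring through $A$, and on checking that the maps in \eqref{eq_h1_def} are natural transformations of $\OO$-modules, not just maps of vector spaces. Once the colim-acyclicity of $\Omega(F)$ and the constancy of $M_\natural$ are secured, the rest is a bookkeeping exercise with long exact sequences and the edge maps.
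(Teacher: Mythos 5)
Your treatment of the first two isomorphisms is correct and follows essentially the same route as the paper: Corollary \ref{cor_one-O-bimod} applied to the constant $\OO$-bimodule $M$ (whose $F$-action factors through $A$, so $M_\natural=M/[M,A]$ is a constant functor and its higher colimits vanish), followed by the short exact sequence $H_1(F,M)\mono\Omega(F)\otimes_{F^e}M\epi[M,F]$ with constant last term. The point you single out as the main obstacle is indeed the right one and you resolve it correctly.

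The gap is in the part involving $\Omega(F)_\natural$. You invoke ``the general identity $N\otimes_{F^e}A\cong N_\natural$'', but the correct identity is $N_\natural=N/[N,F]\cong N\otimes_{F^e}F$; tensoring over $F^e$ with $A$ instead of $F$ yields only an epimorphism $\Omega(F)_\natural\epi\Omega(F)\otimes_{F^e}A$ whose kernel is the image of $\Omega(F)\otimes_{F^e}R$ --- this is precisely why the theorem asserts a surjection $\Omega(F)\otimes_{F^e}A\twoheadleftarrow\Omega(F)_\natural$ rather than an isomorphism of functors. Consequently your derivation of $HH_{n+1}(A)\cong\colim_n\Omega(F)_\natural$ as ``the specialization $M=A$'' does not go through, and your justification that the surjection becomes an isomorphism after $\colim_n$ ``because the kernels/cokernels are built from the colim-acyclic pieces $A$ and $A_\natural$'' fails: the relevant kernel is $\Omega(F)\otimes_{F^e}R$, which is neither constant nor built from $A$. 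The paper supplies two genuine extra arguments at this point, both absent from your outline: (i) Corollary \ref{cor_one-O-bimod} is applied to the non-constant colim-acyclic $\OO$-bimodule $\Omega(F)$ itself, using $H_1(F,\Omega(F))=0$ and the degree shift $HH_{n+1}(A)=H_n(A,\Omega(A))$, to obtain $HH_{n+1}(A)\cong\colim_n\Omega(F)_\natural$; and (ii) Proposition \ref{prop_O-bimodules-tor} is applied to $\Omega(F)$ (free as an $F^e$-module, so the relevant ${\sf Tor}^{F^e}$ vanishes) and $R$ (with ${\sf Colim}_0\, R=0$), giving $\colim_*\ \Omega(F)\otimes_{F^e}R={\sf Tor}^{A^e}_*(\Omega(A),0)=0$ and hence that $\Omega(F)_\natural\epi\Omega(F)\otimes_{F^e}A$ induces isomorphisms on all colimits. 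Without these steps the chain of identifications in the second half of the theorem is not established.
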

\begin{proof}
The first isomorphism follows from Corollary \ref{cor_one-O-bimod} if we take the constant $\OO$-bimodule $\MM=M,$ use that $ M_\natural =M/[M,A] $ is a constant functor and that higher colimits of a constant functor are trivial. 

The second isomorphism follows from the first isomorphism and the short exact sequence with the constant last term $H_1(F,M) \mono \Omega(F)\otimes_{F^e}M \epi [M,F].$ 

The $\OO$-bimodules $F$ and $F\otimes F$ are colim-acyclic. Hence $\Omega(F)$ is a colim-acyclic $\OO$-bimodule as well. The isomorphism $HH_{n+1}(A)\cong HH_{n}(A,\Omega(A))\cong \colim_n\ \Omega(F)_\natural$ for $n\geq 1$ follows from Corollary \ref{cor_one-O-bimod} if we take the $\OO$-bimodule given by $\MM(F\epi A)=\Omega(F)$ and use that $H_1(F,\Omega(F))=0$.

Since $\Omega(F)$ is a free $F^e$-module (see \cite[Prop. 5.8]{QuillenC} or \cite[Rem 3.1.3]{Loday}), we obtain ${\sf Tor}_n^{F^e}(\Omega(F),R)=0 $ for $n\ne 0.$
Then Proposition \ref{prop_O-bimodules-tor} implies that $\colim_n\ \Omega(F)\otimes_{F^e} R={\sf Tor}_n^{A^e}(\Omega(A),0)=0.$  The short exact sequence $R\mono F\epi A$ gives the following short exact sequence
$$0 \longrightarrow \Omega(F)\otimes_{F^e} R \longrightarrow \Omega(F)_\natural \longrightarrow \Omega(F)\otimes_{F^e} A \longrightarrow 0.$$
Combining this short exact sequence with  the isomorphism $\colim_* \Omega(F)\otimes_{F^e} R=0$, we obtain that the epimorphism $\Omega(F)_\natural \epi \Omega(F)\otimes_{F^e}A$ induces an isomorphism $\colim_*\ \Omega(F)_\natural=\colim_*\ \Omega(F)\otimes_{F^e}A.$

Finally, the short exact sequence with the constant last term
$$0 \longrightarrow H_1(F,A) \longrightarrow \Omega(F)\otimes_{F^e}A \longrightarrow [A,A] \longrightarrow 0 $$
implies that the map $H_1(F,A)\mono \Omega(F)\otimes_{F^e} A$ induces an isomorphism $\colim_n \ H_1(F,A)=\colim_n \ \Omega(F)\otimes_{F^e}A$ for $n\ne 0$ and the short exact sequence $\colim_0\ H_1(F,A) \mono \Omega(A)_\natural \epi [A,A].$
\end{proof}

\begin{Proposition}\label{prop_h1(f,omega)} There is a natural isomorphism
$$H_1(F,A^e)=R/R^2 $$
and a natural short exact sequence
$$0 \longrightarrow \frac{R^2+[R,F]}{R^2} \longrightarrow H_1(F,\Omega(A)) \longrightarrow HH_2(A) \longrightarrow 0 $$
such that the diagram
$$\begin{tikzcd}
({R^2+[R,F]})/{R^2} \arrow[r,rightarrowtail] \arrow[d,hookrightarrow] & H_1(F,\Omega(A)) \arrow[r,twoheadrightarrow] \arrow[d,rightarrowtail] & HH_2(A)\\
R/R^2 \arrow[r,"\cong"] & H_1(F,A\otimes A)
\end{tikzcd} $$
is commutative.
\end{Proposition}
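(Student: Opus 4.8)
The plan is to deduce both statements from the long exact sequences of $H_\ast(F,-)={\sf Tor}^{F^e}_\ast(F,-)$, using throughout that $F$ is free, so that $F\otimes F$ is a free $F$-bimodule (whence $H_n(F,F\otimes F)=0$ for $n\geq 1$) and $H_n(F,-)=0$ for $n\geq 2$. Throughout I write $A^e=A\otimes A$ for the bimodule with the outer structure, and I reduce to a genuinely free presentation $F=T(V)$ as in Remark \ref{rem_free}.

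For the first isomorphism I would start from the short exact sequence of $F$-bimodules
$$0\longrightarrow K\longrightarrow F\otimes F\longrightarrow A\otimes A\longrightarrow 0,\qquad K={\rm Ker}(F\otimes F\epi A\otimes A)=R\otimes F+F\otimes R.$$
Applying $H_\ast(F,-)$ and using $H_1(F,F\otimes F)=0$ yields $H_1(F,A\otimes A)\cong{\rm Ker}\big(H_0(F,K)\to H_0(F,F\otimes F)\big)$. Here $H_0(F,-)=(-)_\natural$, and $(F\otimes F)_\natural\cong F$ via $x\otimes y\mapsto yx$, so that $H_0(F,F\otimes F)\to H_0(F,A\otimes A)$ is the projection $F\epi A$ and the image of $H_0(F,K)\to F$ is $R$. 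It then remains to compute $K_\natural$ explicitly: the coinvariance relations force $r\otimes f\equiv fr\otimes 1$ and $f\otimes r\equiv 1\otimes rf$, while the two reductions of an element of $R\otimes R$ must agree, giving $s\otimes 1\equiv 1\otimes s$ for $s\in R^2$. Hence $K_\natural\cong(R\oplus R)/\{(s,-s):s\in R^2\}$, the map to $F$ becomes $(a,b)\mapsto a+b$, and its kernel $\{(a,-a)\}/\{(s,-s):s\in R^2\}$ is isomorphic to $R/R^2$. This proves $H_1(F,A^e)\cong R/R^2$, the isomorphism sending $r+R^2$ to the class of $r\otimes 1-1\otimes r$.

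For the short exact sequence I would apply $H_\ast(F,-)$ to $0\to\Omega(A)\overset{\iota}\to A\otimes A\overset{\mu}\to A\to 0$, with $\mu$ the multiplication. Since $H_2(F,A)=0$, the sequence $0\to H_1(F,\Omega(A))\overset{\iota_\ast}\to H_1(F,A\otimes A)\overset{\mu_\ast}\to H_1(F,A)$ is exact; this already provides the injection $H_1(F,\Omega(A))\mono H_1(F,A\otimes A)\cong R/R^2$ of the diagram and identifies its image with ${\rm Ker}(\mu_\ast)$. Using the description $H_1(F,M)={\rm Ker}\big(\Omega(F)\otimes_{F^e}M\overset{\alpha^F_M}\to M\big)$ coming from \eqref{eq_h1_def}, together with the identification $\Omega(F)\otimes_{F^e}M\cong V\otimes M$ under which $\alpha^F_M(v\otimes m)=mv-vm$, the map $\mu_\ast$ becomes ${\rm id}_V\otimes\mu$. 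Consequently $r$ is sent to the ``cyclic derivative'' $\theta(r)=\sum_i v_{j_i}\otimes\overline{(v_{j_{i+1}}\cdots v_{j_k}v_{j_1}\cdots v_{j_{i-1}})}\in V\otimes A$; since $\theta$ depends only on the cyclic word it vanishes on $[F,F]$, so $[R,F]\subseteq R\cap[F,F]\subseteq{\rm Ker}(\mu_\ast)$ and hence $(R^2+[R,F])/R^2\subseteq{\rm Ker}(\mu_\ast)$.

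It then remains to identify the quotient ${\rm Ker}(\mu_\ast)/\big((R^2+[R,F])/R^2\big)$ with $HH_2(A)$. I would do this via the natural comparison map $c\colon H_1(F,\Omega(A))\to H_1(A,\Omega(A))$ together with the dimension shift $H_1(A,\Omega(A))\cong HH_2(A)$, which follows from $0\to\Omega(A)\to A^e\to A\to 0$ over $A$ and $H_{\geq1}(A,A^e)=0$. The whole point is that $c$ is surjective with kernel exactly $(R^2+[R,F])/R^2$; this is the \emph{main obstacle}, and I expect to establish it either through the change-of-rings spectral sequence for the surjection $F^e\epi A^e$ (whose kernel is the $K$ above), or by a direct comparison of the Hochschild complexes of $F$ and $A$, tracking precisely which $1$-cycles with coefficients in $\Omega(A)$ become boundaries over $A$. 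Once these identifications are in place, commutativity of the square is automatic: the left vertical arrow is the inclusion ${\rm Ker}(c)=(R^2+[R,F])/R^2\hookrightarrow R/R^2$, the right vertical arrow is $\iota_\ast$, and by construction $\iota_\ast$ restricts on ${\rm Ker}(c)\subseteq H_1(F,\Omega(A))$ to this natural inclusion.
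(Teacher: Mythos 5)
Your computation of $H_1(F,A^e)\cong R/R^2$ is correct and takes a genuinely different route from the paper: you resolve $A\otimes A$ by $0\to K\to F\otimes F\to A\otimes A\to 0$ with $K=R\otimes F+F\otimes R$ and compute $K_\natural$ by hand, whereas the paper quotes the ``Magnus embedding for algebras'' $0\to R/R^2\to \Omega(F)\otimes_{F^e}A^e\to A^e$ from \cite{IvanovMikhailov} and identifies the second map with $\alpha_{A^e}$. Your identification of $K_\natural$ with $(R\oplus R)/\{(s,-s):s\in R^2\}$ checks out (it is the cokernel of $(R\otimes R)_\natural\to (R\otimes F)_\natural\oplus(F\otimes R)_\natural$), and the resulting injection $H_1(F,\Omega(A))\mono R/R^2$ together with the containment $(R^2+[R,F])/R^2\subseteq \ker(\mu_*)$ are also fine.

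However, the heart of the proposition --- that the quotient $H_1(F,\Omega(A))/\bigl((R^2+[R,F])/R^2\bigr)$ is $HH_2(A)$ --- is exactly the step you leave unproved. Saying that the comparison map $c:H_1(F,\Omega(A))\to H_1(A,\Omega(A))$ ``is surjective with kernel exactly $(R^2+[R,F])/R^2$'' and that you ``expect to establish it'' by a change-of-rings spectral sequence or a direct complex comparison is not a proof, and the spectral-sequence route is not routine: for $F^e\epi A^e$ the relevant spectral sequence has two rows $E^2_{p,0}=HH_{p+1}(A)$ and $E^2_{p,1}={\sf Tor}^{A^e}_p(R/R^2,\Omega'(A))$, so $\ker(c)=E^\infty_{0,1}$ is the cokernel of a $d_2$ differential $HH_3(A)\to (R/R^2)\otimes_{A^e}\Omega'(A)$, and identifying that cokernel inside $R/R^2$ with $(R^2+[R,F])/R^2$ is essentially equivalent to the statement you are trying to prove. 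The paper sidesteps this by tensoring the short exact sequence of right $A^e$-modules $0\to R/R^2\to \Omega(F)\otimes_{F^e}A^e\to\Omega(A)\to 0$ with the inclusion $i'_A:\Omega'(A)\mono A^e$: because $\Omega(F)\otimes_{F^e}A^e$ is a projective right $A^e$-module the middle vertical map stays injective, and the three kernels of $pr\otimes 1$, $pr\tilde\otimes i'_A$ and $1\tilde\otimes i'_A$ are computed directly to be $(R^2+[R,F])/R^2$, $H_1(F,\Omega(A))$ and $HH_2(A)$, with the exactness of the kernel sequence coming for free from the surjectivity of $pr\otimes 1$. You should either carry out an argument of this kind or pin down the $d_2$ differential; as written the proposal establishes only the easy parts of the proposition.
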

\begin{proof}
There is an exact sequence \cite[Prop 7.2]{IvanovMikhailov} of  $A$-bimodules
$$ 0 \longrightarrow R/R^2 \xrightarrow{\delta} A \otimes_F  \Omega(F)\otimes_{F} A \xrightarrow{\ f\ } A\otimes A,$$
where $\delta(r+R^2)=1\otimes d(r)\otimes 1$ and $f$ is an $A$-bimodule homomorphism such that  $f(1\otimes (x\otimes 1 - 1\otimes x)\otimes 1)=\bar x\otimes 1 - 1\otimes \bar x,$ where $x\in F$ and $\bar x$ is its image in $A.$ We call it ``Magnus embedding for algebras''. Consider the isomorphism of $A$-bimodules $A\otimes_F \Omega(F)\otimes_F A\cong  \Omega(F)\otimes_{F^e} A^e$ given by $a\otimes m \otimes b \mapsto  m \otimes (a \otimes b).$ Here we assume that the structure of a left $A^e$-module on $A^e$ is given by multiplication in $A^e=A^{op}\otimes A$ (it is different from $A\otimes A$ and uses the ``inner side'': $(a\otimes b)*(a'\otimes b')=a'a\otimes bb'$). And the structure of the right $A^e$-module is given by the multiplication as well but it is more standard because uses ``exterior'' multiplication  and $A^e=A\otimes A$ as right $A^e$-modules.   Then we can rewrite this exact sequence as follows
\begin{equation}\label{eq_es1}
0 \longrightarrow R/R^2 \xrightarrow{\delta'}   \Omega(F) \otimes_{F^e} A^e \xrightarrow{\ f' \ } A^e
\end{equation}
where $\delta'(r+R^2)=d(r) \otimes   1\otimes 1$ and $f'$ is a right $A^e$-module homomorphism such that $f'(m\otimes 1\otimes 1)=\bar m,$ where $m\in \Omega(F)$ and $\bar m$ is its image in $\Omega(A).$ Note that
$$f'=i_F  \tilde\otimes  1_{A^e}=\alpha_{A^e} ,$$ where $i_F\tilde \otimes 1_{A^e}$ is the composition of $ i_F\otimes 1_{A^e}:A^e \otimes \Omega(F) \to A^e \otimes_{F^e} F^e$ together with the isomorphism $F^e\otimes_{F^e} A^e\cong A^e.$  Indeed, both of them are $A$-bimodule homomorphisms that send $m\otimes 1\otimes 1 $ to $\bar m,$ where $m\in \Omega(F)$ and $\bar m$ is its image in $\Omega(A).$ Therefore,
$$R/R^2={\rm Ker}(f')={\rm Ker}(\alpha_{A^e})=H_1(F,A^e).$$

The image of $f'=\alpha_{A^e}$ is equal to $\Omega(A).$ Then we have the following exact sequence of right $A^e$-modules
\begin{equation}\label{eq_se2}
0 \longrightarrow R/R^2 \xrightarrow{\delta'}   \Omega(F) \otimes_{F^e} A^e \xrightarrow{\ \tilde f' \ } \Omega(A) \longrightarrow 0.
\end{equation}
Consider the map ${\sf tw}:A\otimes A\to A\otimes A$ given by ${\sf tw}(a\otimes b)=b\otimes a$ and set
$$\Omega'(A)={\sf tw}(\Omega(A)).$$ Then $\Omega'(A)$ is a left ideal of $A^e.$ If we consider $\Omega'(A)$ as an $A$-bimodle, then $\Omega'(A)\cong \Omega(A).$
We tensor the short exact sequence \eqref{eq_se2} by $\Omega'(A)$ and use that there is a monomorphism $i'_A:\Omega'(A)\mono A^e$ of left $A^e$-modules:
$$\begin{tikzcd}
 R/R^2  \otimes_{A^e} \Omega'(A)\arrow[d] \arrow[r,"\delta\otimes 1  "] &
 \Omega(F) \otimes_{F^e}\Omega'(A)\arrow[d,rightarrowtail] \arrow[r,twoheadrightarrow,"  pr\otimes 1"]
 \arrow[rd,"pr \tilde \otimes i'_A" description] & \Omega(A)\otimes_{A^e} \Omega'(A) \arrow[d,"1\tilde \otimes i'_A"]
\\
 R/R^2 \arrow[r,rightarrowtail,] & \Omega(F) \otimes_{F^e}A^e\arrow[r,twoheadrightarrow] & \Omega(A)
\end{tikzcd}
$$
Here we use the isomorphism $M\otimes_{A^e}A^e=M$ given by $m\otimes(a\otimes b)\mapsto amb.$ Then there is a short exact sequence
$$ 0 \longrightarrow {\rm Ker}(pr \otimes 1_{\Omega(A)}) \longrightarrow {\rm Ker}(pr \tilde \otimes i'_A) \longrightarrow {\rm Ker}(1_{\Omega(A)} \tilde \otimes i'_{A}) \longrightarrow 0.$$
Prove that this short exact sequence is the sequence that we need.

Since $\Omega(F)$ is a free bimodule, we get that the map $\Omega(F)\otimes_{F^e}\Omega(A) \to   \Omega(F)  \otimes_{F^e}  A^e$ is a monomorphism. It follows that the kernel $pr \otimes  1$ is isomorphic to the image of $R/R^2\otimes_{A^e}  \Omega'(A)\to R/R^2$   which is equal to $[R/R^2, A]=(R^2+[R,F])/R^2:$
$${\rm Ker}(pr\otimes 1_{\Omega(A)})= (R^2+[R,F])/R^2.$$

Note that $pr \tilde \otimes 'i_A =i_F \tilde \otimes 1_{\Omega'(A)}.$ Indeed, both of them are induced by the multiplication in $A^e.$ Therefore
$${\rm Ker}(pr \tilde \otimes i_A)=H_1(F,\Omega'(A))=H_1(F,\Omega(A)).$$
Finally, by the same reason we see that
$${\rm Ker}(1_{\Omega(A)}\otimes i'_A)={\rm Ker}(i_{A}\otimes 1_{\Omega'(A)})=H_1(A,\Omega'(A))=HH_2(A).$$
\end{proof}

\begin{Corollary}\label{cor_R^2+[R,F]} For $n\geq 1$ there is an isomorphism $$HH_{n+2}(A)=\colim_n\ R^2+[R,F]$$
and $\colim_0 \ R^2+[R,F]=0.$
\end{Corollary}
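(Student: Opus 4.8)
The plan is to feed the short exact sequence of Proposition \ref{prop_h1(f,omega)} into the long exact sequence of derived colimits and then strip off the term $R^2$ using Lemma \ref{lemma_R^n}. Concretely, I would start from the natural short exact sequence of functors on ${\sf Pres}(A)$
$$ 0 \longrightarrow \frac{R^2+[R,F]}{R^2} \longrightarrow H_1(F,\Omega(A)) \longrightarrow HH_2(A) \longrightarrow 0, $$
in which $\Omega(A)$ and $HH_2(A)$ are the constant coefficient bimodule and the constant vector space, while the two outer functors genuinely depend on the presentation $F\epi A$.

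First I would compute the colimits of the two known terms. Since $HH_2(A)$ is constant and ${\sf Pres}(A)$ is contractible, Proposition \ref{prop_colim_of_constant} gives $\colim_n HH_2(A)=0$ for $n\geq 1$ and $\colim_0 HH_2(A)=HH_2(A)$. For the middle term, Theorem \ref{theorem_hochsch} applied to the constant bimodule $M=\Omega(A)$ yields $\colim_n H_1(F,\Omega(A))\cong H_{n+1}(A,\Omega(A))$, and the dimension-shift isomorphism $H_{n+1}(A,\Omega(A))\cong HH_{n+2}(A)$ (coming from $\Omega(A)\mono A^e\epi A$ and $A^e$-freeness, exactly as in the proof of Theorem \ref{theorem_hochsch}) identifies this with $HH_{n+2}(A)$. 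Thus, for $n\geq 1$, the long exact sequence of colimits attached to the displayed sequence has vanishing outer terms and degenerates into isomorphisms
$$ \colim_n \frac{R^2+[R,F]}{R^2}\cong \colim_n H_1(F,\Omega(A))\cong HH_{n+2}(A). $$
Next I would remove $R^2$: applying the long exact sequence of colimits to $R^2\mono R^2+[R,F]\epi (R^2+[R,F])/R^2$ and using $\colim_m R^2=0$ for all $m$ (Lemma \ref{lemma_R^n}), I get $\colim_n(R^2+[R,F])\cong \colim_n (R^2+[R,F])/R^2$ for every $n$. Combined with the previous isomorphism this proves the first assertion $\colim_n(R^2+[R,F])\cong HH_{n+2}(A)$ for $n\geq 1$.

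It remains to treat $n=0$, and this is where I expect the only real subtlety. The tail of the long exact sequence reads
$$ 0 \longrightarrow \colim_0 \frac{R^2+[R,F]}{R^2} \longrightarrow \colim_0 H_1(F,\Omega(A)) \xrightarrow{\ \phi\ } HH_2(A) \longrightarrow 0, $$
where the left-hand $0$ comes from $\colim_1 HH_2(A)=0$, and where $\colim_0 H_1(F,\Omega(A))\cong H_1(A,\Omega(A))\cong HH_2(A)$ by Theorem \ref{theorem_hochsch} and the dimension shift. The point is to show that $\phi$ is an isomorphism (a priori the sequence only gives surjectivity), which forces $\colim_0 (R^2+[R,F])/R^2=0$ and hence, by the previous paragraph, $\colim_0(R^2+[R,F])=0$. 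For this I would verify that the natural transformation $H_1(F,\Omega(A))\to HH_2(A)$ of Proposition \ref{prop_h1(f,omega)} is precisely the map on Hochschild homology $H_1(F,\Omega(A))\to H_1(A,\Omega(A))$ induced by the presentation $F\epi A$; this is visible from the identification of the surjection there with $\ker(pr\,\tilde\otimes\,i'_A)\epi \ker(1_{\Omega(A)}\,\tilde\otimes\,i'_A)$, which is induced by the base-change projection $\Omega(F)\otimes_{F^e}(-)\to \Omega(A)\otimes_{A^e}(-)$. Granting this, $\phi$ is exactly the degree-zero comparison map that Theorem \ref{theorem_hochsch} identifies as the isomorphism $\colim_0 H_1(F,\Omega(A))\cong H_1(A,\Omega(A))$, hence an isomorphism, and the corollary follows. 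The main obstacle is therefore not the homological bookkeeping but the recognition of this boundary map in degree zero as the expected natural map, so that it can be seen to be an isomorphism rather than merely a surjection.
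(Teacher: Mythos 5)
Your argument is essentially identical to the paper's: compute $\colim_n H_1(F,\Omega(A))=H_{n+1}(A,\Omega(A))=HH_{n+2}(A)$ via Theorem \ref{theorem_hochsch}, feed the short exact sequence of Proposition \ref{prop_h1(f,omega)} (whose last term is a constant functor) into the long exact sequence of derived colimits, and then strip off $R^2$ using Lemma \ref{lemma_R^n}. The only difference is that you make explicit why the degree-zero map $\colim_0 H_1(F,\Omega(A))\to HH_2(A)$ is an isomorphism rather than merely a surjection --- a point the paper leaves implicit --- and your identification of it with the natural base-change comparison map is the correct justification.
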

\begin{proof}
Theorem \ref{theorem_hochsch} implies that $\colim_n \ H_1(F,\Omega(A))=H_{n+1}(A,\Omega(A))=HH_{n+2}(A)$ for  $n\geq 0.$ The short exact sequence with the constant functor in the end $(R^2+[R,F])/R^2 \mono H_1(F,\Omega(A))\epi HH_2(A)$ (Proposition \ref{prop_h1(f,omega)}) implies that $\colim_n \ (R^2+[R,F])/R^2 = HH_{n+2}(A)$ for $n\geq 1$ and $\colim_0\ (R^2+[F,F])/R^2=0.$ Then, using $\colim_* R^2=0$ (Lemma \ref{lemma_R^n}) and the short exact sequence $R^2\mono (R^2+[F,F])\epi (R^2+[F,F])/R^2$ we obtain $ \colim_n \ (R^2+[R,F])=\colim_n \ (R^2+[R,F])/R^2.$
\end{proof}

\subsection{Reduced cyclic homology over a field of  characteristic zero}

In this subsection we will always assume that ${\sf char}(k)=0.$

For a unital algebra $B$ we denote by $\bar B$ the quotient $\bar B=B/k\cdot 1.$ Moreover, for a free algebra $F$  we set $$\bar F_\natural = F/(k\cdot 1+[F,F]).$$ It is easy to see that there is a short exact sequence
$$ 0 \longrightarrow k \longrightarrow F_\natural \longrightarrow  \bar F_\natural \longrightarrow 0.$$
As usual, we consider $\bar F_\natural$ as a functor from the category of presentations that sends $F\epi A$ to $\bar F_\natural.$

We denote by $\overline{HC}_n(A)$ the reduced cyclic homology of $A$ \cite[\S 2.2.13]{Loday}.  Note that
$$\bar F_\natural =\overline{HC}_0(F).$$

\begin{Theorem}[{cf. \cite{DonadzeInassaridzeLadra}}]\label{theorem_cyclic} Assume that ${\sf char}(k)=0.$ Then for any $n\geq 0$ there is an isomorphism
$$\colim_n\ \bar F_\natural= \overline{HC}_n(A).$$
\end{Theorem}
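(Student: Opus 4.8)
The plan is to reduce the statement, via Corollary \ref{cor_derived}, to a computation of homotopy groups of a simplicial vector space, and then to compare the two spectral sequences of a double complex. First I would observe that $\bar F_\natural=\overline{HC}_0(F)$ depends only on the algebra $F$ and not on the chosen surjection $F\epi A$; that is, the functor $\bar F_\natural\colon {\sf Pres}(A)\to {\sf Vect}$ is the restriction $\Phi'$ of $\Phi=\overline{HC}_0\colon {\sf Alg}^u\to {\sf Vect}$ in the sense of Corollary \ref{cor_derived}. Fixing a simplicial free resolution $\varepsilon_\bullet\colon F_\bullet\to A^{\sf const}$ (taken levelwise free, which is harmless by Remark \ref{rem_free}), Corollary \ref{cor_derived} gives
$$\colim_n\ \bar F_\natural=\pi_n\bigl(\overline{HC}_0(F_\bullet)\bigr).$$
It then remains to identify $\pi_n(\overline{HC}_0(F_\bullet))$ with $\overline{HC}_n(A)$.

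For this I would use that, since ${\sf char}(k)=0$, reduced cyclic homology is computed by the single Connes complex $\overline{CC}_\bullet(B)$ with $\overline{CC}_q(B)=(\bar B^{\otimes q+1})_{\ZZ/(q+1)}$ (cyclic coinvariants) and Hochschild differential $b$. Applying $\overline{CC}_\bullet(-)$ levelwise to $F_\bullet$ produces a first-quadrant double complex $D_{p,q}=\overline{CC}_q(F_p)$, whose horizontal differential is $b$ and whose vertical differential is the alternating sum of the faces of $F_\bullet$; these commute because the face maps are algebra homomorphisms. Since $D$ is first-quadrant, both spectral sequences of $\mathrm{Tot}(D)$ converge to $H_*(\mathrm{Tot}(D))$, and I would show that each collapses to one of the two desired objects.

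Taking vertical homology first, the $q$-th column is the simplicial vector space $(\bar F_\bullet^{\otimes q+1})_{\ZZ/(q+1)}$. As $F_\bullet\to A$ is a resolution, so is $\bar F_\bullet\to \bar A$; over the field $k$ everything is flat, so by the Eilenberg--Zilber/K\"unneth theorem $\pi_*(\bar F_\bullet^{\otimes q+1})=\bar A^{\otimes q+1}$ concentrated in degree $0$, and because $|\ZZ/(q+1)|$ is invertible the coinvariants functor is exact and commutes with $\pi_*$. Hence this spectral sequence collapses onto the column $p=0$, where it computes $H_q(\overline{CC}_\bullet(A))=\overline{HC}_q(A)$, giving $H_n(\mathrm{Tot}(D))=\overline{HC}_n(A)$. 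Taking horizontal homology first, the $p$-th row computes $\overline{HC}_q(F_p)$, and here the crucial input is the vanishing $\overline{HC}_q(F)=0$ for $q\geq 1$ for a free algebra $F$ over a field of characteristic zero. Granting this, the row homology is concentrated in $q=0$ and equals $\overline{HC}_0(F_p)$, so this spectral sequence collapses onto the row $q=0$ and computes $\pi_n(\overline{HC}_0(F_\bullet))$, whence $H_n(\mathrm{Tot}(D))=\pi_n(\overline{HC}_0(F_\bullet))$.

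Comparing the two computations of $H_n(\mathrm{Tot}(D))$ and combining with the first displayed isomorphism yields $\colim_n\ \bar F_\natural=\overline{HC}_n(A)$, as desired. The principal obstacle is the vanishing lemma for free algebras, which is the only genuinely nontrivial ingredient; it follows from Loday's computation of the Hochschild and cyclic homology of tensor algebras together with Connes' SBI sequence. Concretely, for $F=T(V)$ one has $HH_q(F)=0$ for $q\geq 2$, and in each positive internal degree $n$ the map $B\colon \overline{HC}_0(F)\to HH_1(F)$ identifies the cyclic coinvariants $(V^{\otimes n})_{\ZZ/n}$ with the cyclic invariants $(V^{\otimes n})^{\ZZ/n}$, an isomorphism in ${\sf char}\,0$; feeding this into SBI forces $\overline{HC}_q(F)=0$ for all $q\geq 1$. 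Once this lemma is in hand, the remainder is the routine two-spectral-sequences comparison, with the only characteristic-zero-specific points being the use of the Connes complex and the exactness of cyclic coinvariants.
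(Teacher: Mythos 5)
Your proof is correct, and its engine is the same as the paper's: everything reduces to the vanishing $\overline{HC}_q(F)=0$ for $q\geq 1$ when $F$ is free and ${\sf char}(k)=0$ (which the paper simply extracts from Loday's $HC_n(F)=HC_n(k)$, $n\geq 1$, plus augmentedness, rather than re-deriving it via SBI as you do), fed into a two-spectral-sequence comparison on a double complex built from the cyclic complex of the free objects. The implementation differs in two respects. First, you pass immediately to a simplicial free resolution via Corollary \ref{cor_derived} and then argue entirely in classical homological algebra, using the levelwise Connes quotient complex $(\bar B^{\otimes q+1})_{\ZZ/(q+1)}$ and the ordinary Eilenberg--Zilber/K\"unneth theorem plus exactness of coinvariants in characteristic zero; the paper instead never leaves the functor category over ${\sf Pres}(A)$: it takes the complex of functors ${\bf P}_\bullet={\sf Tot}\,\bar\BB(F)_{\sf red}$, proves each term is colim-acyclic using $\colim_n F=0$ for $n\geq 1$ together with the K\"unneth theorem for derived colimits (Proposition \ref{prop_Kunneth_colim}), and concludes by the acyclic-resolution principle with $\colim_0{\bf P}_\bullet={\sf Tot}\,\bar\BB(A)_{\sf red}$. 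Second, your use of the Connes complex makes characteristic zero enter twice (once for that complex to compute $\overline{HC}$, once for the vanishing lemma), whereas the paper's reduced bicomplex computes $\overline{HC}$ in any characteristic, isolating ${\sf char}(k)=0$ in the vanishing lemma alone. Your route is more self-contained on the classical side and avoids Proposition \ref{prop_Kunneth_colim}; the paper's is more uniform with the rest of its machinery. Two cosmetic points: the horizontal and vertical differentials of your $D_{p,q}$ should anticommute (standard sign adjustment), and ``collapses onto the column $p=0$'' should read ``row $p=0$'' with your indexing; neither affects the argument.
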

\begin{proof}
We follow notation of Loday \cite[\S 2.1.9]{Loday} and denote by  by $\bar \BB(A)$ the double complex that computes the cyclic homology of an algebra $A,$ where
$$\bar \BB(A)_{n,m}= A\otimes \bar A^{\otimes (m-n)}$$
for $m\geq n$ and $\bar \BB(A)_{n,m}=0$ for $m<n.$ Further, following \cite[\S 2.1.13]{Loday}, we denote by $\bar \BB(B)_{\sf red}$ the double complex that computes the reduced cyclic homology
$\bar \BB(A)_{\sf red}=\bar \BB(A)/\bar \BB(k).$
It is easy to see that
$$ (\bar \BB(A)_{\sf red})_{n,m}= \bar A^{\otimes (m-n+1)} $$
for $m\geq n$ and $0$ otherwise. Consider the complex  ${\bf P}_\bullet$ of functors ${\sf Pres}(A)\to {\sf Vect}$ given by
$${\bf P}_\bullet(F\epi A)= {\sf Tot}\ \bar \BB(F)_{\sf red}.$$

Prove that ${\bf P}_\bullet$ is a colim-acyclic resolution of the functor $\bar F_\natural.$ By \cite[Theorem 3.1.6]{Loday} we have that $HC_n(F)=HC_n(k)$ for $n\geq 1.$ Combining this with the fact that $F$ is an augmented algebra, we obtain $\overline{HC}_n(F)=0$ for  $n\geq 1.$ Therefore
$$H_n({\bf P}_\bullet)=\overline{HC}_n(F)=0, \hspace{1cm} H_0({\bf P}_\bullet)=\overline{HC}_0(F)=\bar F_\natural$$
for $n\geq 1$. Now we need to prove that ${\bf P}_n$ is colim-acyclic. Theorem \ref{th_derived} implies that
$$\colim_n\ F=0, \hspace{1cm}  \colim_0 \ F=A $$
for $n\geq 1.$ Using the short exact sequence $k\mono F\epi \bar F$, we obtain
$$\colim_n\ \bar F=0, \hspace{1cm}  \colim_0 \ \bar F=\bar A.$$ Then the K\"unneth theorem for colimits (Proposition \ref{prop_Kunneth_colim}) implies that
$$ \colim_n \ {\bf P}_\bullet =0, \hspace{1cm} \colim_0 \ {\bf P}_\bullet={\sf Tot}\ \bar \BB(A)_{\sf red}$$
for $n\geq 1.$ This follows that ${\bf P}_\bullet$ is a colim-acyclic resolution. Then the assertion follows from the equation $ \colim_0 \ {\bf P}_\bullet={\sf Tot}\ \bar \BB(A)_{\sf red}$.
\end{proof}

\begin{Lemma}\label{lemma_Omega(F)_decomposition} Let ${\sf char}(k)=0$  and $F$ be a free $k$-algebra. Then there is a short exact sequence
$$0 \longrightarrow \bar F_\natural \overset{\tilde d}\longrightarrow \Omega(F)_\natural \overset{\alpha}\longrightarrow [F,F] \longrightarrow 0,$$
where $\tilde{d}$ is induced by the universal derivation $d:F\to \Omega(F),$ $d(a)=a\otimes 1 - 1\otimes a$ and $\alpha(ab\otimes c - a\otimes bc+[\Omega(F),F])=[ca,b].$  In particular, there is an isomorphism
$HH_1(F)=\bar F_\natural.$
\end{Lemma}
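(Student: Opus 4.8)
The plan is to build on the four-term exact sequence obtained earlier from \eqref{eq_h1_def} by setting $M=A$; specialized to $A=F$ it reads
\[ 0 \longrightarrow HH_1(F) \longrightarrow \Omega(F)_\natural \overset{\alpha}\longrightarrow F \longrightarrow F_\natural \longrightarrow 0, \]
with $\alpha(ab\otimes c-a\otimes bc+[\Omega(F),F])=[ca,b]$. Since the cokernel of $\alpha$ is $F_\natural=F/[F,F]$, the image of $\alpha$ is exactly $[F,F]$, and the sequence restricts to the short exact sequence $0\to HH_1(F)\to\Omega(F)_\natural\xrightarrow{\alpha}[F,F]\to 0$ with $HH_1(F)=\ker\alpha$. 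Thus it remains to produce $\tilde d$ and identify it with an isomorphism $\bar F_\natural\xrightarrow{\cong}\ker\alpha$; the asserted identity $HH_1(F)=\bar F_\natural$ is then automatic.

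First I would check that $\tilde d$ is well defined and lands in $\ker\alpha$. Because $d$ is a derivation, $d(1)=0$ and $d(ab)-d(ba)=[da,b]+[a,db]\in[\Omega(F),F]$, so $d$ descends to $\tilde d\colon \bar F_\natural=F/(k\cdot 1+[F,F])\to\Omega(F)_\natural$. Writing $d(a)=a\otimes 1-1\otimes a=(1\cdot a)\otimes 1-1\otimes(a\cdot 1)$ and applying the formula for $\alpha$ with $(a,b,c)=(1,a,1)$ gives $\alpha(\tilde d(a))=[1,a]=0$, so $\operatorname{im}\tilde d\subseteq\ker\alpha$.

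The heart of the argument is to show $\tilde d$ maps $\bar F_\natural$ isomorphically onto $\ker\alpha$. Here I would use the standard presentation of the forms of a tensor algebra: for $F=T(V)$ there is a bimodule isomorphism $\Omega(F)\cong F\otimes V\otimes F$ sending $dv$ to $1\otimes v\otimes 1$, under which the inclusion $\Omega(F)\hookrightarrow F\otimes F$ becomes $f\otimes v\otimes g\mapsto fv\otimes g-f\otimes vg$. Passing to $(-)_\natural=(-)\otimes_{F^e}F$ yields $\Omega(F)_\natural\cong F\otimes V$, and under this identification $\alpha$ becomes the map $\bar\partial\colon F\otimes V\to F$, $h\otimes v\mapsto hv-vh$, while $\tilde d$ becomes $v_1\cdots v_n\mapsto\sum_{i=1}^n v_{i+1}\cdots v_n v_1\cdots v_{i-1}\otimes v_i$. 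All three terms are graded by tensor length, so it suffices to argue in each degree $m$ separately.

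Fixing $m$ and writing $W=V^{\otimes m}$ with cyclic shift $t$ generating a $\ZZ/m$-action, the degree-$m$ parts of $\bar\partial$ and $\tilde d$ become, respectively, $1-t\colon W\to W$ and the norm $N=1+t+\dots+t^{m-1}$, the latter descended to the coinvariants $W_{\ZZ/m}=(\bar F_\natural)_m$. The required exactness is then the assertion that $0\to W_{\ZZ/m}\xrightarrow{N}W\xrightarrow{1-t}W$ is exact, i.e. that the norm induces an isomorphism $W_{\ZZ/m}\xrightarrow{\cong}W^{\ZZ/m}=\ker(1-t)$. This is the classical fact that for a finite cyclic group over a field of characteristic zero the averaged norm identifies coinvariants with invariants, and it is the only place where ${\sf char}(k)=0$ enters. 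I expect the main obstacle to be precisely this last identification: keeping the bookkeeping of the cyclic rotations straight so that $\tilde d$ and $\alpha$ genuinely correspond to $N$ and $1-t$, and then invoking the characteristic-zero (co)invariants isomorphism in each degree. Assembling the graded isomorphisms gives $\tilde d\colon \bar F_\natural\xrightarrow{\cong}\ker\alpha=HH_1(F)$, which completes the short exact sequence and the final isomorphism $HH_1(F)=\bar F_\natural$.
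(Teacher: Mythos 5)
Your proposal is correct and follows essentially the same route as the paper: both reduce to the four-term sequence built from the universal derivation, identify $\Omega(F)_\natural$ with $V\otimes F$ via $\Omega(F)\cong F\otimes V\otimes F$, decompose by tensor degree, and recognize $\alpha$ as $t-1$ and $\tilde d$ as the norm map for the cyclic group acting on $V^{\otimes n}$, with characteristic zero entering exactly through the exactness of $0\to (V^{\otimes n})_{C_n}\xrightarrow{N}V^{\otimes n}\xrightarrow{t-1}V^{\otimes n}$. The paper phrases this last step via the standard periodic resolution computing $H_*(C_n,-)$ over a field of characteristic zero, which is the same classical coinvariants-to-invariants fact you invoke.
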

\begin{proof}
We denote by $C_n=\langle t \mid t^n=1 \rangle $ the cyclic group generated by $t.$  It is well known that
$$\dots \xrightarrow{N} \ZZ[C_n] \xrightarrow{t-1} \ZZ[C_n] \xrightarrow{N} \ZZ[C_n] \xrightarrow{t-1} \ZZ[C_n] \to 0 $$
is a projective resolution of the trivial module over $C_n,$ where $N=1+t+\dots + t^{n-1}.$ Then for any $\ZZ[C_n]$-module $M$ the complex
$$ \dots \xrightarrow{N} M \xrightarrow{t-1} M \xrightarrow{N} M \xrightarrow{t-1} M \to 0 $$
computes $H_*(C_n,M).$ If $M$ is a $k[C_n]$-module, we know that $H_n(C_n,M)=0$ for $n\ne 0$ and $H_0(C_n,M)=M_{C_n}$ because ${\sf char}(k)=0.$ It follows that there is a four term exact sequence
\begin{equation}\label{eq_M_exact_Q}
0 \longrightarrow M_{C_n} \xrightarrow{\tilde N} M \xrightarrow{t-1} M \longrightarrow M_{C_n} \longrightarrow 0,
\end{equation}
where $\tilde N$ is induced by $N.$

Since $d:F\to \Omega(F)$ is a derivation, we have $d([a,b])=[d(a),b]+[a,d(b)].$ It follows that $d([F,F])\subseteq [\Omega(F),F].$ Moreover, $d(1)=0.$ Then $\tilde d:\bar F_\natural \to \Omega(F)_\natural$ is well defined.
In order to prove the statement, it is enough to prove that the sequence
$$ 0 \longrightarrow \bar F_\natural \xrightarrow{\tilde d} \Omega(F)_\natural \xrightarrow{\tilde \alpha}  F \longrightarrow  F_\natural \longrightarrow 0$$
is exact, where $\tilde \alpha$ is the composition of $\alpha$ with the embedding $[F,F]\subset F.$ We are already know that the sequence $\Omega(F)_\natural \to F \to F_\natural \to 0$ is exact. So we need to prove that ${\rm Ker}(\tilde \alpha)={\rm Im}(\tilde d)$ and that $\tilde d$ is a monomorphism.

Let $F=T(V)$ be the tensor algebra on a vector space $V.$ Then it is  well known that there is an isomorphism of bimodules.
\begin{equation}\label{eq_tvt}
F\otimes V \otimes F \cong \Omega(F),
\end{equation}
$$ a \otimes v \otimes b \mapsto av\otimes b - a\otimes vb$$
(for example, see \cite[3.1.3]{Loday}; the proof of \cite[Prop. 9.1.6]{Weibel}; or \cite[Prop. 5.8]{QuillenC}).
Since $F\otimes V \otimes F$ is a free bimodule, and there are isomorphisms $(-)_\natural\cong (-) \otimes_{F^e}F$ and $(F\otimes V \otimes F)\otimes_{F^e}F=V\otimes F^e\otimes_{F^e}F=V\otimes F$, we have an isomorphism
\begin{equation}\label{eq_vt}
(F\otimes V \otimes F)_\natural = V\otimes F,
\end{equation}
$$a\otimes v \otimes b + [F\otimes V \otimes F,F] \leftrightarrow v\otimes ba.$$
The composition of the isomorphism $V\otimes F\cong \Omega(F)_\natural$ together with the map $\tilde \alpha$ is the map
$$\tilde \alpha':  V\otimes T(V) \longrightarrow T(V), $$ $$\tilde \alpha'(v_0\otimes v_1\dots v_n)=[v_1\dots v_n,v_0]=v_1\dots v_nv_0-v_0v_1\dots v_n.$$
Note that $\tilde \alpha'=\oplus \tilde \alpha'_n,$ where $$\tilde \alpha'_n:V\otimes V^{\otimes n-1} \longrightarrow V^{\otimes n}, \hspace{1cm} \tilde \alpha'_n=\cdot (t-1),$$
where $t$ is given by the obvious action of $C_n$ on $V^{\otimes n}.$ Since the sequence $\Omega(F)_\natural \xrightarrow{\tilde \alpha} F \to F_\natural \to 0$ is exact, we obtain $F_\natural= k \oplus \left(\bigoplus_{n\geq 1} (V^{\otimes n})_{C_n}\right).$ Therefore $\bar F_\natural= \bigoplus_{n\geq 1} (V^{\otimes n})_{C_n}.$ The exact sequence \eqref{eq_tvt} implies the exact sequence
$$0 \longrightarrow (V^{\otimes n})_{C_n} \xrightarrow{\tilde N} V^{\otimes n} \xrightarrow{t-1} V^{\otimes n} \longrightarrow (V^{\otimes n})_{C_n} \longrightarrow 0.$$
Then we only need to prove that $\tilde d:\bar F_\natural \to \Omega(F)_\natural$ composed with the isomorphisms $\bar F_\natural=\bigoplus_{n\geq 1} (V^{\otimes n})_{C_n}$ and $\Omega(F)_\natural= \bigoplus_{n\geq 1} V\otimes V^{\otimes n-1}$ is given by the homomorphism $\tilde N$ on each direct summand.

Indeed. The composition of the universal derivation $d:F\to \Omega(F)$ with the isomorphism  $\Omega(F)=T(V)\otimes V \otimes T(V)$ is given by
$$d':T(V) \longrightarrow T(V) \otimes V \otimes T(V) $$
$$d'(v_1  \dots  v_n)=\sum v_1\dots v_{i-1} \otimes v_i \otimes v_{i+1}\dots v_n.$$ If we pass to the quotients $T(V)_\natural$ and $(T(V)\otimes V \otimes T(V))_\natural$ and compose it  with the isomorphism \eqref{eq_vt} we obtain the map
$$\tilde d':T(V)_\natural \longrightarrow V\otimes T(V)$$
given by
$$\tilde d'(v_1  \dots  v_n+[T(V),T(V)])=\sum   v_i \otimes v_{i+1}\dots v_nv_1\dots v_{i-1},$$
which is equal to $\tilde N$.
\end{proof}

\begin{Remark} If ${\sf char}(k)\ne 0,$ the isomorphism $HH_1(F)=\bar F_\natural$ fails. This can be shown using the Connes' exact sequence for $F$ and the computation of $HC_*(F)$ (\cite[Theorem 3.1.6]{Loday}).
\end{Remark}

\begin{Theorem}[Hopf's formula for $\overline{HC}_{2n+1}$]\label{th_hopf_red} Let ${\sf char}(k)=0.$ Then for any $n\geq 0$ there is a natural isomorphism
$$ \overline{HC}_{2n+1}(A)=\frac{R^{n+1}\cap ([F,F]+k\cdot 1)}{[R,R^n]}.$$
Moreover, the functors $([F,F]+k\cdot 1)/[R,F]$ and $(R^{n+2} \cap [R,R^n])/[R,R^{n+1}]$ are constant  for $n\geq 0$.
\end{Theorem}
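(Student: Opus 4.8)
```latex
\textbf{Proof proposal.} The plan is to realize $\overline{HC}_{2n+1}(A)$ as a higher colimit and then use the strong connectivity of $\mathsf{Pres}(A)$ together with a Hopf-type formula to strip the (co)limit away. The starting point is Theorem \ref{theorem_cyclic}, which gives $\overline{HC}_m(A)=\colim_m \bar F_\natural$ for all $m\geq 0$. So the whole game is to compute $\colim_{2n+1}\bar F_\natural$ and to show that it already equals the strict quotient on the right. The natural bridge is the short exact sequence of Lemma \ref{lemma_Omega(F)_decomposition},
$$0 \longrightarrow \bar F_\natural \longrightarrow \Omega(F)_\natural \longrightarrow [F,F] \longrightarrow 0,$$
which relates cyclic homology ($\bar F_\natural$) to Hochschild homology ($\Omega(F)_\natural$, computing $HH_{*+1}(A)$ by Theorem \ref{theorem_hochsch}) and to $[F,F]$, whose colimits I expect to identify with shifted $\overline{HC}$ by the Connes-type sequence quoted in the introduction (Proposition \ref{prop_connes_omega}). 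Reading off the long exact sequence of derived colimits, I would pin down $\colim_{2n+1}\bar F_\natural$ as the desired cyclic homology group.

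The heart of the matter is the passage from the colimit expression to the strict subquotient $\frac{R^{n+1}\cap([F,F]+k\cdot 1)}{[R,R^n]}$, with \emph{no} colimit on the right. The mechanism should mirror the classical Hopf formula: $\colim$ over a strongly connected category is the largest constant quotient (Proposition \ref{colimitintuitive}), so once I exhibit a functor whose value on each presentation surjects onto a \emph{presentation-independent} group, that group is the colimit. Concretely I would build, level by level, a filtration by powers $R^m$ and analyze the associated graded pieces; the groups $R^{n+1}\cap([F,F]+k\cdot 1)$ and $[R,R^n]$ are exactly the natural candidates appearing when one lifts cyclic cycles along $F\epi A$. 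The key computational input is the identification of Hochschild homology $HH_{n+2}(A)=\colim_n(R^2+[R,F])$ from Corollary \ref{cor_R^2+[R,F]}, which I would iterate (replacing $F$-level commutator subgroups by $R^n$-level ones) using the colim-vanishing results $\colim_*\,R^m=0$ and $\colim_*\,R^m/R^l=0$ from Lemma \ref{lemma_R^n}.

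For the second assertion, that $([F,F]+k\cdot 1)/[R,F]$ and $(R^{n+2}\cap[R,R^n])/[R,R^{n+1}]$ are \emph{constant} functors, the strategy is to show these quotients do not depend on the choice of presentation by directly checking that any morphism of presentations induces an isomorphism on them. Equivalently, I would verify that their higher colimits vanish and their zeroth colimit equals the functor itself: a functor on a strongly connected category is constant precisely when $\colim_0$ recovers every value and $\colim_n=0$ for $n\geq 1$. Here the vanishing of $\colim_*$ of the $R^m$-power functors (Lemma \ref{lemma_R^n}) plus the Hochschild identifications should force the relevant spectral sequences (Proposition \ref{prop_spectral_sequence_of_colimits} and \ref{prop_spectral_sequence_of_colimits_II}) to degenerate, collapsing the subquotient to a constant.

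I expect the main obstacle to be the bookkeeping that turns the colimit formula $\overline{HC}_{2n+1}(A)=\colim(\dots)$ into the strict equality on the right. Proving that the colimit is \emph{achieved} strictly, i.e. that the surjection $\frac{R^{n+1}\cap([F,F]+k\cdot 1)}{[R,R^n]}\epi \overline{HC}_{2n+1}(A)$ is already an isomorphism before passing to the colimit, requires showing the left-hand group is \emph{itself} constant in the presentation. This is precisely the content of the ``constant functor'' claim, and it is subtle because, unlike in the group case (Proposition \ref{prop_Phi(R)}), subalgebras of free algebras need not be free, so $R$-dependent functors can have nontrivial colimits in higher degrees. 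The delicate point is therefore an exact interplay between the commutator filtration $[R,R^n]$ and the intersection $R^{n+1}\cap([F,F]+k\cdot 1)$, which must conspire to produce a constant quotient; I anticipate this requires a careful induction on $n$ feeding the two auxiliary constancy statements back into the main isomorphism.
```
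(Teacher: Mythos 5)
There is a genuine gap, and it sits exactly where you yourself locate ``the main obstacle'': the passage from a colimit identity to a strict, presentation-independent isomorphism. The colimit formalism you invoke cannot supply this step. Proposition \ref{colimitintuitive} only ever produces the \emph{largest constant quotient}, i.e.\ at best a natural surjection $\frac{R^{n+1}\cap([F,F]+k\cdot 1)}{[R,R^n]}\epi \colim_0(\cdots)$; it gives no mechanism for proving injectivity, and your proposed remedies do not close the circle. Iterating Corollary \ref{cor_R^2+[R,F]} and Lemma \ref{lemma_R^n} would (after many dimension shifts) at most re-express $\colim_{2n+1}\bar F_\natural$ as $\colim_0$ of some other functor, but the specific short exact sequences needed to land on exactly $\frac{R^{n+1}\cap([F,F]+k\cdot 1)}{[R,R^n]}$ are not available from the cited results, and even if they were, you would still face the constancy problem. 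Worse, the logical order in the paper is the reverse of what you assume: Proposition \ref{prop_[F,F][R,F][R,R]} (the colimit computations of $[F,F]$, $[R,F]$, $[R,R]$) is \emph{deduced from} Theorem \ref{th_hopf_red}, using precisely the constancy of $[F,F]/[R,F]$, so it cannot be used as an input here without circularity.

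The missing idea is an external strict input: Quillen's exact sequence \cite[Th.~5.11]{QuillenC}
$$0 \longrightarrow \overline{HC}_{2n+1}(A)\longrightarrow \frac{R^{n+1}}{[R,R^n]} \xrightarrow{\ \delta\ } \Omega(F)_\natural,$$
valid for \emph{every} presentation $F\epi A$, with $\delta$ induced by the universal derivation. The paper's entire proof consists of combining this with Lemma \ref{lemma_Omega(F)_decomposition}: since $\tilde d\colon \bar F_\natural\to\Omega(F)_\natural$ is injective and $\delta$ factors through it, $\overline{HC}_{2n+1}(A)$ is the kernel of $R^{n+1}/[R,R^n]\to F/([F,F]+k\cdot 1)$, which is literally $\bigl(R^{n+1}\cap([F,F]+k\cdot 1)\bigr)/[R,R^n]$ because $[R,R^n]\subseteq[F,F]$. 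No colimits appear in the main isomorphism at all. The constancy claims then come for free: taking $n=0$ exhibits $([F,F]+k\cdot 1)/[R,F]$ as an extension of the constant functor $[A,A]+k\cdot 1$ by the constant functor $\overline{HC}_1(A)$, and $(R^{n+2}\cap[R,R^n])/[R,R^{n+1}]$ is the kernel of the map of constant functors $\overline{HC}_{2n+3}(A)\to\overline{HC}_{2n+1}(A)$ --- so your plan to prove constancy independently and then feed it back into the main isomorphism inverts the actual dependency. Without Quillen's theorem (or an equivalent strict statement), the proof does not go through.
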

\begin{proof}
Quillen proved \cite[Th. 5.11]{QuillenC} that there is an exact sequence
$$0 \longrightarrow \overline{HC}_{2n+1}(A)\longrightarrow \frac{R^{n+1}}{[R,R^n]} \xrightarrow{\ \delta\ } \Omega(F)_\natural,$$
where $\delta$ is induced by the universal derivation $d:F\to \Omega(F).$ Lemma \ref{lemma_Omega(F)_decomposition}
 implies that $\delta$
 factors through the monomorphism $\bar F_\natural \xrightarrow{\tilde d} \Omega(F)_\natural.$ Therefore $\overline{HC}_{2n+1}(A)$
is the kernel of the map $R^{n+1}/{[R,R^{n+1}]} \to  F/([F,F]+k\cdot 1).$ Then $\overline{HC}_{2n+1}(A)= ({R^{n+1}\cap ([F,F]+k\cdot 1)})/{[R,R^n]}.$ If we take $n=0,$ we obtain a short exact sequence
$$0 \longrightarrow \overline{HC}_{1}(A) \longrightarrow \frac{[F,F]+k\cdot 1}{[R,F]} \longrightarrow [A,A]+k\cdot 1 \longrightarrow 0.$$
Since $\overline{HC}_{1}(A)$ and $[A,A]+k\cdot 1$ are constant, the middle term is constant as well.
The functor $(R^{n+2} \cap [R,R^n])/[R,R^{n+1}]$ is the kernel of the morphism of constant functors $\overline{HC}_{2n+3}(A)\to \overline{HC}_{2n+1}(A)$ and hence it is constant.
\end{proof}

\begin{Proposition}\label{prop_[F,F][R,F][R,R]}
Let ${\sf char}(k)=0.$ Then there are isomorphisms
$$\overline{HC}_{n+1}(A)=\colim_{n}\ [F,F]=\colim_{n} \  [R,F], $$
$$ \colim_0\ [R,F]=0, $$
for $n\geq 1,$ and the embedding $[R,F]\subseteq [F,F]$ induces isomorphims $\colim_{n}\ [F,F]=\colim_{n}\ [R,F]$ for $n\geq 1.$ Moreover,
$$ \overline{HC}_{n+2}(A)= \colim_{n}\ H_1(F,R) $$
for $n\geq 0,$ and
$$ \overline{HC}_{n+3}(A)= \colim_{n}\ [R,R]$$
for $n\geq 1.$
\end{Proposition}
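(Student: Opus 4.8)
The plan is to derive all four statements from the long exact sequence of derived colimits attached to a handful of short exact sequences of functors on ${\sf Pres}(A)$, feeding in the identifications $\colim_n \bar F_\natural = \overline{HC}_n(A)$ (Theorem \ref{theorem_cyclic}) and $\colim_n \Omega(F)_\natural = HH_{n+1}(A)$ for $n\ge 1$ (Theorem \ref{theorem_hochsch}), together with the vanishing $\colim_*\,R^m=0$ (Lemma \ref{lemma_R^n}) and the constancy statements of Theorem \ref{th_hopf_red}.

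First I would settle the comparisons that require no new input. Putting $M=R$ into the four-term sequence \eqref{eq_h1_def} yields a short exact sequence of functors
$$ 0 \longrightarrow H_1(F,R) \longrightarrow \Omega(F)\otimes_{F^e} R \longrightarrow [R,F] \longrightarrow 0, $$
whose middle term is colim-acyclic: this is exactly the vanishing $\colim_*\,\Omega(F)\otimes_{F^e}R=0$ established inside the proof of Theorem \ref{theorem_hochsch} (using that $\Omega(F)$ is $F^e$-free and $\colim_0 R=0$). Right-exactness of $\colim_0$ then gives $\colim_0[R,F]=0$, and the boundary maps give $\colim_n[R,F]\cong \colim_{n-1}H_1(F,R)$ for $n\ge 1$. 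Next, $[F,F]/[R,F]$ is a constant functor: by Theorem \ref{th_hopf_red} the functor $([F,F]+k\cdot 1)/[R,F]$ is constant, and since $k\cdot 1\cap[F,F]=0$ we have $([F,F]+k\cdot 1)/[R,F]\cong ([F,F]/[R,F])\oplus k\cdot 1$, so the summand $[F,F]/[R,F]$ is constant too. Applying the colimit long exact sequence to $0\to[R,F]\to[F,F]\to[F,F]/[R,F]\to 0$ and using that higher colimits of a constant functor vanish over the contractible category ${\sf Pres}(A)$, I obtain $\colim_n[R,F]\cong\colim_n[F,F]$ for $n\ge 1$.

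The core is the identity $\colim_n[F,F]=\overline{HC}_{n+1}(A)$, which I would read off from the sequence $0\to\bar F_\natural\xrightarrow{\tilde d}\Omega(F)_\natural\to[F,F]\to 0$ of Lemma \ref{lemma_Omega(F)_decomposition}. Substituting the two identifications above into its colimit long exact sequence produces
$$ \cdots \to \overline{HC}_n(A)\xrightarrow{\ \partial\ } HH_{n+1}(A)\to \colim_n[F,F]\to \overline{HC}_{n-1}(A)\xrightarrow{\ \partial\ } HH_n(A)\to\cdots, $$
with $\partial$ induced by the universal derivation $\tilde d$. The hard part, and the one genuinely new ingredient, is to prove that $\partial$ agrees with Connes' operator $B\colon\overline{HC}_n(A)\to HH_{n+1}(A)$ under these isomorphisms; I expect this from a chain-level comparison of $d$ with $B$, realized by matching the cyclic bicomplex resolution of $\bar F_\natural$ from Theorem \ref{theorem_cyclic} with the resolution of $\Omega(F)_\natural$ from Theorem \ref{theorem_hochsch}. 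Such a comparison simultaneously produces a morphism from the displayed sequence to Connes' $SBI$ sequence $\cdots\to\overline{HC}_n\xrightarrow{B}HH_{n+1}\xrightarrow{I}\overline{HC}_{n+1}\xrightarrow{S}\overline{HC}_{n-1}\xrightarrow{B}HH_n\to\cdots$ that is the identity on all $HH$- and $\overline{HC}$-terms; the five lemma then forces $\colim_n[F,F]\cong\overline{HC}_{n+1}(A)$ for $n\ge 1$.

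The last two identities are then formal. For the coefficient statement I chain the isomorphisms already obtained: for $n\ge 0$,
$$ \colim_n H_1(F,R)\cong \colim_{n+1}[R,F]\cong\colim_{n+1}[F,F]\cong\overline{HC}_{n+2}(A). $$
For $[R,R]$ I would first show $\colim_n\big([R,F]/[R,R]\big)\cong HH_{n+2}(A)$ for $n\ge 1$: Corollary \ref{cor_R^2+[R,F]} gives $HH_{n+2}(A)=\colim_n(R^2+[R,F])$, which equals $\colim_n\big([R,F]/([R,F]\cap R^2)\big)$ since $\colim_*\,R^2=0$, while the kernel $([R,F]\cap R^2)/[R,R]$ of $[R,F]/[R,R]\epi[R,F]/([R,F]\cap R^2)$ is constant by the case $n=0$ of Theorem \ref{th_hopf_red}, so the two colimits coincide. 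Feeding $0\to[R,R]\to[R,F]\to[R,F]/[R,R]\to 0$ into the colimit long exact sequence, identifying the induced map $\overline{HC}_{n+1}(A)\to HH_{n+2}(A)$ with $B$ by the same type of chain-level comparison, and comparing once more with Connes' sequence yields $\colim_n[R,R]=\overline{HC}_{n+3}(A)$ for $n\ge 1$. Thus the whole proposition reduces to dévissage with the vanishing and constancy inputs, plus the single comparison $\partial=B$, which is where I expect essentially all the difficulty to lie.
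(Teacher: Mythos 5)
Your peripheral reductions are sound and match the paper: the shift $\colim_n[R,F]\cong\colim_{n-1}H_1(F,R)$ and the vanishing $\colim_0[R,F]=0$ via the colim-acyclicity of $\Omega(F)\otimes_{F^e}R$, and the identification $\colim_n[R,F]\cong\colim_n[F,F]$ for $n\geq 1$ from the constancy of $[F,F]/[R,F]$, are exactly how the paper proceeds. The problem is the two central identities, $\colim_n[F,F]=\overline{HC}_{n+1}(A)$ and $\colim_n[R,R]=\overline{HC}_{n+3}(A)$. Your plan for both is to read them off from a colimit long exact sequence whose other terms are $HH$ and $\overline{HC}$ groups, and then compare with Connes' $SBI$ sequence by the five lemma; but this requires constructing an actual morphism of long exact sequences which is the identity on the $HH$- and $\overline{HC}$-terms, and in particular identifying the connecting homomorphisms with Connes' operator $B$. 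You explicitly defer this ("I expect this from a chain-level comparison"), and without it the sequence from Lemma \ref{lemma_Omega(F)_decomposition} only yields extensions $0\to\mathrm{coker}(\partial)\to\colim_n[F,F]\to\ker(\partial)\to 0$, not the claimed isomorphism. So the proposal has a genuine gap precisely where you locate "essentially all the difficulty." Note also that in the paper's logic the Connes-type sequences (Propositions \ref{prop_connes_omega} and \ref{connesseq}) are deduced \emph{from} this proposition, not used to prove it.

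The paper's actual argument shows that no such comparison is needed. For $[F,F]$ it uses the short exact sequence $0\to[F,F]+k\cdot 1\to F\to\bar F_\natural\to 0$: since $\colim_n F=0$ for $n\geq 1$ (Theorem \ref{th_derived}), the long exact sequence gives $\colim_n([F,F]+k\cdot 1)\cong\colim_{n+1}\bar F_\natural=\overline{HC}_{n+1}(A)$ for $n\geq 1$, and splitting off the constant $k$ gives $\colim_n[F,F]=\overline{HC}_{n+1}(A)$ --- a two-line dimension shift with no connecting maps to identify. For $[R,R]$ it uses $\colim_*R^2=0$ (Lemma \ref{lemma_R^n}) to shift $\colim_n[R,R]\cong\colim_{n+1}(R^2/[R,R])$, and then Quillen's four-term exact sequence $0\to\overline{HC}_3(A)\to R^2/[R,R]\to H_1(F,R)\to\overline{HC}_2(A)\to 0$, whose outer terms are constant: the spectral sequence of Proposition \ref{prop_spectral_sequence_of_colimits} (converging to zero) then forces $\colim_{n+1}(R^2/[R,R])\cong\colim_{n+1}H_1(F,R)=\overline{HC}_{n+3}(A)$ for $n\geq 1$. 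If you want to salvage your route you must actually carry out the chain-level identification of $\partial$ with $B$; otherwise you should replace the core of your argument by these dimension-shifting inputs. (A minor additional point: your claimed range $n\geq 1$ for $\colim_n([R,F]/[R,R])\cong HH_{n+2}(A)$ only comes out as $n\geq 2$, since at $n=1$ the constant kernel $(R^2\cap[R,F])/[R,R]$ contributes a nonzero $\colim_0$ term; this is the range in the paper's Lemma \ref{lemma_[R,F]/[R,R]}.)
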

\begin{proof}
The short exact sequence $[F,F]+k\cdot 1 \mono F \epi \bar F_\natural$ implies that $\colim_n ([F,F]+k\cdot 1)=\colim_{n+1} \bar F_\natural$ for $n\geq 1$ because $\colim_n\: F=0.$ Theorem \ref{theorem_cyclic} implies $\colim_n ([F,F]+k\cdot 1)=HC_{n+1}(A)$ for $n\geq 1.$ The short exact sequence $[F,F]\mono [F,F]+k\cdot 1 \epi k$ implies $\colim_n [F,F]=HC_{n+1}(A)$ for $n\geq 1.$ Since the functor $[F,F]/[R,F]$ is constant (Theorem \ref{th_hopf_red}), the short exact sequence $[R,F]\mono [F,F]\epi [F,F]/[R,F]$ implies that the embedding $[R,F] \mono [F,F] $ induces isomorphisms $\colim_n\  [R,F]= \colim_n\ [F,F] $ for $n\geq 1.$ Then $$\colim_n\ [R,F]=\overline{HC}_{n+1}(A)$$ for $n\geq 1.$

Since $\Omega(F)$ is a free $F$-bimodule, we have ${\sf Tor}^{F^e}_m(\Omega(F),R)=0$ for $m\ne 0.$ Then Proposition \ref{prop_O-bimodules-tor} implies that $\colim_n\: \Omega(F)\otimes_{F^e}R=0$ for any $n\in \mathbb Z.$ Hence the short exact sequence $H_1(F,R)\mono \Omega(F)\otimes_{F^e}R \epi [R,F]$ implies
$$ \colim_{n} \: H_1(F,R)=\colim_{n+1}\: [R,F]$$
for any $n\in \mathbb Z.$ It follows that 
$$\colim_{n} \ H_1(F,R)=\overline{HC}_{n+2}(A)$$
for $n\geq 0$ and $\colim_0 [R,F]=0.$

Lemma \ref{lemma_R^n} implies that $\colim_* R^2=0.$ Then the short exact sequence $[R,R]\mono R^2 \epi R^2/[R,R]$ implies $\colim_n [R,R]=\colim_{n+1} \ R^2/[R,R] $ for all $n.$
Quillen proved \cite[Th. 5.11]{QuillenC} that there is an exact sequence
$$0 \longrightarrow \overline{HC}_{3}(A) \longrightarrow \frac{R^2}{[R,R]} \longrightarrow H_1(F,R) \longrightarrow \overline{HC}_{2}(A) \longrightarrow 0.	$$ Applying the spectral sequence of colimits (Proposition \ref{prop_spectral_sequence_of_colimits}) to this small complex, and using that this spectral sequence converges to zero, we obtain that $\colim_{n+1} \ R^2/[R,R]=\colim_{n+1}\ H_1(F,R)$ for $n\geq 1.$ Therefore, for $n\geq 1$ we get $\colim_n [R,R]=\overline{HC}_{n+3}(A).$
 \end{proof}

\subsection{Connes' exact sequence via derived colimits}

In this subsection we give two ways how to obtain the Connes' exact sequence from the developed theory.
Here we assume that ${\sf char}(k)=0.$

\begin{Proposition}\label{prop_connes_omega} Let ${\sf char}(k)=0.$ Then the  long exact sequence of derived colimits of the short exact sequence
$$ 0 \longrightarrow \bar F_\natural \longrightarrow \Omega(F) \longrightarrow [F,F] \longrightarrow 0 $$
(from Lemma \ref{lemma_Omega(F)_decomposition}) gives a  long exact sequence $$ \dots \to \overline{HC}_4 \to \overline{HC}_2 \to HH_3 \to \overline{HC}_3 \to  \overline{HC}_1 \to HH_2 \to \overline{HC}_2,$$
where $\overline{HC}_n=\overline{HC}_n(A)$ and $HH_n=HH_n(A).$
\end{Proposition}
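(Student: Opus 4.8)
The plan is to feed the short exact sequence of functors
$$0 \longrightarrow \bar F_\natural \longrightarrow \Omega(F)_\natural \longrightarrow [F,F] \longrightarrow 0$$
from Lemma \ref{lemma_Omega(F)_decomposition} (where the middle term is $\Omega(F)_\natural$) into the long exact sequence of left derived functors of $\colim$. Since $\colim:\AA^{{\sf Pres}(A)}\to \AA$ is right exact and $\colim_n$ is its $n$-th left derived functor, this short exact sequence of functors to ${\sf Vect}$ produces the usual long exact sequence
$$\cdots \to \colim_n \bar F_\natural \to \colim_n \Omega(F)_\natural \to \colim_n [F,F] \xrightarrow{\ \partial\ } \colim_{n-1}\bar F_\natural \to \cdots.$$
So the entire content of the proposition is to rewrite each term using the computations already established.

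The three identifications I would invoke are: $\colim_n \bar F_\natural = \overline{HC}_n(A)$ for all $n\geq 0$ (Theorem \ref{theorem_cyclic}); $\colim_n \Omega(F)_\natural = HH_{n+1}(A)$ for $n\geq 1$ (Theorem \ref{theorem_hochsch}); and $\colim_n [F,F] = \overline{HC}_{n+1}(A)$ for $n\geq 1$ (Proposition \ref{prop_[F,F][R,F][R,R]}). Substituting these into the long exact sequence above, for every $n\geq 2$ one reads off the repeating six-term pattern
$$\overline{HC}_{n}(A) \to HH_{n+1}(A) \to \overline{HC}_{n+1}(A) \xrightarrow{\ \partial\ } \overline{HC}_{n-1}(A) \to HH_{n}(A) \to \overline{HC}_{n}(A),$$
and splicing these segments together over all $n$ yields precisely the displayed long exact sequence.

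It then remains to deal with the boundary. The sequence is genuinely infinite to the left, since all three identifications are valid in every sufficiently high degree. On the right it must be truncated exactly at $\overline{HC}_2(A)=\colim_1[F,F]$: the following arrow of the derived-colimit long exact sequence lands in $\colim_0\bar F_\natural$ and then passes through $\colim_0\Omega(F)_\natural$ and $\colim_0[F,F]$, and the two shifted identifications fail at $n=0$ (for instance $\colim_0\Omega(F)_\natural=\Omega(A)_\natural$, which by Theorem \ref{theorem_hochsch} only contains $HH_1(A)$ as a submodule and is not equal to it). Hence the clean pattern cannot be continued past $\overline{HC}_2(A)$, which is the right-hand endpoint of the stated sequence.

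The step requiring the most care is the degree bookkeeping: keeping straight the index shifts (the functor $\Omega(F)_\natural$ in colimit degree $n$ computes $HH_{n+1}$, the functor $[F,F]$ in degree $n$ computes $\overline{HC}_{n+1}$, while $\bar F_\natural$ is unshifted) so that the three families interleave correctly, and verifying that the overlap of their ranges of validity is exactly what forces the truncation at $\overline{HC}_2(A)$. Beyond this bookkeeping there is no genuine obstacle, as the assertion is a formal consequence of the long exact sequence of derived functors applied to Lemma \ref{lemma_Omega(F)_decomposition} together with the three previously proven computations.
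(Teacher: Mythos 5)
Your proof is correct and is exactly the paper's argument: the authors' own proof is a one-line citation of Theorem \ref{theorem_cyclic}, Theorem \ref{theorem_hochsch} and Proposition \ref{prop_[F,F][R,F][R,R]}, applied to the long exact sequence of derived colimits of the short exact sequence from Lemma \ref{lemma_Omega(F)_decomposition}. Your additional care with the index shifts, the correct reading of the middle term as $\Omega(F)_\natural$, and the explanation of why the sequence truncates at $\overline{HC}_2(A)$ only makes explicit what the paper leaves implicit.
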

\begin{proof}
This follows from Theorem \ref{theorem_cyclic}, Theorem \ref{theorem_hochsch} and Proposition \ref{prop_[F,F][R,F][R,R]}.
\end{proof}

\begin{Lemma}\label{lemma_[R,F]/[R,R]} Let ${\sf char}(k)=0.$ Then for $n\geq 2$ there is an isomorphism
$$\colim_n\ \frac{[R,F]}{[R,R]}=HH_{n+2}(A).$$
\end{Lemma}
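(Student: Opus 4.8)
The plan is to realize $[R,F]/[R,R]$ as an extension of a functor already computed in Corollary \ref{cor_R^2+[R,F]} by a \emph{constant} functor, so that higher colimits see only the computed part. The point to keep in mind is that $[R,F]\not\subseteq R^2$, so one cannot simply compare $[R,F]/[R,R]$ with $R^2/[R,R]$; the correct intermediate object is the intersection $R^2\cap[R,F]$.

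First I would record the inclusions $[R,R]\subseteq R^2\cap[R,F]\subseteq[R,F]$ (the left one because $[r,r']=rr'-r'r\in R^2$), all natural in the presentation, and write down the short exact sequence of functors on ${\sf Pres}(A)$
$$0\longrightarrow\frac{R^2\cap[R,F]}{[R,R]}\longrightarrow\frac{[R,F]}{[R,R]}\longrightarrow\frac{[R,F]}{R^2\cap[R,F]}\longrightarrow0.$$
The key observation is that the left-hand functor is the $n=0$ case of $(R^{n+2}\cap[R,R^n])/[R,R^{n+1}]$ (with the convention $R^0=F$, so $[R,R^0]=[R,F]$ and $[R,R^1]=[R,R]$), and is therefore \emph{constant} by the final assertion of Theorem \ref{th_hopf_red}. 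By the second isomorphism theorem the right-hand functor is naturally isomorphic to $(R^2+[R,F])/R^2$.

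Next I would compute the colimits of the right-hand term. From the short exact sequence $0\to R^2\to R^2+[R,F]\to(R^2+[R,F])/R^2\to0$ together with $\colim_* R^2=0$ (Lemma \ref{lemma_R^n}), the long exact sequence of derived colimits yields $\colim_n (R^2+[R,F])/R^2\cong\colim_n(R^2+[R,F])$ for all $n$, and Corollary \ref{cor_R^2+[R,F]} identifies the latter with $HH_{n+2}(A)$ for $n\geq1$. Finally, applying derived colimits to the displayed short exact sequence and using that the constant functor $C=(R^2\cap[R,F])/[R,R]$ satisfies $\colim_m C=0$ for $m\geq1$, both boundary terms $\colim_n C$ and $\colim_{n-1}C$ vanish as soon as $n\geq2$; hence $\colim_n[R,F]/[R,R]\cong\colim_n(R^2+[R,F])/R^2=HH_{n+2}(A)$ for $n\geq2$. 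This also explains the hypothesis $n\geq2$: it is exactly what is needed to annihilate both $\colim_n C$ and $\colim_{n-1}C$.

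The main obstacle is conceptual rather than computational, namely the choice of the right short exact sequence. The naive sequence $0\to[R,R]\to[R,F]\to[R,F]/[R,R]\to0$ would express the answer through the maps $\colim_m[R,R]\to\colim_m[R,F]$, that is $\overline{HC}_{m+3}(A)\to\overline{HC}_{m+1}(A)$, which one would then be obliged to identify with Connes' periodicity operator $S$ before invoking the Connes sequence of Proposition \ref{prop_connes_omega}. Interposing $R^2\cap[R,F]$ and exploiting the constancy furnished by Theorem \ref{th_hopf_red} avoids this identification altogether, reducing the lemma to the already-established value of $\colim_n(R^2+[R,F])$.
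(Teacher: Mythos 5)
Your proof is correct and follows essentially the same route as the paper: the same short exact sequence with the intersection $R^2\cap[R,F]$ interposed, constancy of the first term via Theorem \ref{th_hopf_red}, identification of the third term with $(R^2+[R,F])/R^2$, and the reduction to Corollary \ref{cor_R^2+[R,F]} using $\colim_*R^2=0$. Your additional remarks (the explicit $n=0$ specialization of the constant functor and the explanation of the bound $n\geq 2$) only make explicit what the paper leaves implicit.
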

\begin{proof}
Consider the short exact sequence
$$ 0 \longrightarrow \frac{R^2\cap [R,F]}{[R,R]} \longrightarrow
\frac{[R,F]}{[R,R]} \longrightarrow \frac{[R,F]}{R^2\cap [R,F]} \longrightarrow 0.$$
The first term is constant by  Theorem \ref{th_hopf_red}. The last term is equal to $(R^2+[R,F])/R^2$ by the third isomorphism theorem. Using that  $\colim_*\ R^2=0$ we obtain, $\colim_n [R,F]/[R,R]=\colim_n R^2 +[R,F]$ for $n\geq 2.$ Then the assertion follows from Corollary \ref{cor_R^2+[R,F]}.
\end{proof}

\begin{Proposition}\label{connesseq} Let ${\sf char}(k)=0.$ Then the  long exact sequence of derived colimits of the short exact sequence
$$ 0 \longrightarrow [R,R] \longrightarrow [F,F] \longrightarrow [F,F]/[R,R] \longrightarrow 0   $$
gives a  long exact sequence
$$ \dots \to \overline{HC}_6 \to \overline{HC}_4 \to HH_5 \to \overline{HC}_5 \to  \overline{HC}_3 \to HH_4 \to \overline{HC}_4\to \overline{HC}_2 $$
\end{Proposition}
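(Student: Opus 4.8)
The plan is to feed the given short exact sequence of functors
$$ 0 \longrightarrow [R,R] \longrightarrow [F,F] \longrightarrow [F,F]/[R,R] \longrightarrow 0 $$
into the long exact sequence of the derived functors $\colim_n$, which are by definition the left derived functors of the right-exact functor $\colim:\AA^{{\sf Pres}(A)}\to\AA$ and therefore produce a long exact sequence
$$ \cdots \to \colim_n [R,R] \to \colim_n [F,F] \to \colim_n \frac{[F,F]}{[R,R]} \to \colim_{n-1} [R,R] \to \cdots. $$
The whole content of the proof then lies in identifying each of the three families of terms with shifted reduced cyclic and Hochschild homology groups and substituting.

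For the outer two families I would simply quote Proposition \ref{prop_[F,F][R,F][R,R]}, which already gives $\colim_n [F,F]=\overline{HC}_{n+1}(A)$ and $\colim_n [R,R]=\overline{HC}_{n+3}(A)$ for $n\geq 1$. The only term needing extra attention is the middle one, $\colim_n \frac{[F,F]}{[R,R]}$. Here I would compare it with $\frac{[R,F]}{[R,R]}$ through the short exact sequence
$$ 0 \longrightarrow \frac{[R,F]}{[R,R]} \longrightarrow \frac{[F,F]}{[R,R]} \longrightarrow \frac{[F,F]}{[R,F]} \longrightarrow 0, $$
whose last term $[F,F]/[R,F]$ is a \emph{constant} functor by Theorem \ref{th_hopf_red}. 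Since the higher derived colimits of a constant functor vanish, the inclusion induces isomorphisms $\colim_n \frac{[R,F]}{[R,R]}\cong\colim_n \frac{[F,F]}{[R,R]}$ for $n\geq 1$, and Lemma \ref{lemma_[R,F]/[R,R]} then delivers $\colim_n \frac{[F,F]}{[R,R]}=HH_{n+2}(A)$ for $n\geq 2$.

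Substituting these three identifications into the long exact sequence above yields exactly the displayed Connes-type sequence; reading it off with $n=3$ recovers the initial segment $\overline{HC}_6\to\overline{HC}_4\to HH_5\to\overline{HC}_5\to\cdots$, and the connecting map from $\colim_1\frac{[F,F]}{[R,R]}$ is where the middle identification ceases to be available, which is precisely why the stated sequence terminates at $\overline{HC}_4\to\overline{HC}_2$ rather than continuing to $HH_3$. I do not anticipate any serious obstacle beyond careful bookkeeping of these index ranges, since all three colimit computations are already in hand; the one genuinely substantive input is the constancy of $[F,F]/[R,F]$ coming from the Hopf-type formula, as that is what decouples the middle functor from its $[R,F]$-comparison and lets the middle terms be read as Hochschild homology.
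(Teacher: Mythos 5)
Your argument is correct and takes essentially the same route as the paper, whose proof of this proposition is just the citation of Proposition \ref{prop_[F,F][R,F][R,R]} and Lemma \ref{lemma_[R,F]/[R,R]}; your comparison of $[F,F]/[R,R]$ with $[R,F]/[R,R]$ through the constant quotient $[F,F]/[R,F]$ supplies exactly the bridging step the paper leaves implicit, and it is the same device the paper itself uses inside the proof of Proposition \ref{prop_[F,F][R,F][R,R]}. Your index bookkeeping (the middle identification holding only for $n\geq 2$, which is why the sequence stops at $\overline{HC}_4\to\overline{HC}_2$) also matches the intended statement.
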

\begin{proof}
This follows from Proposition \ref{prop_[F,F][R,F][R,R]} and Lemma \ref{lemma_[R,F]/[R,R]}.
\end{proof}

\section{\bf Cyclic homology of non-unital algebras}

In this section we consider the category of non-unital algebras ${\sf Alg}^{\sf n}$ and present non-unital versions of some results about cyclic homology from the previous section. We assume that ${\sf char}(k)=0.$  Effective epimorphisms of this category are surjective homomorphisms. Projective objects of this category are retracts of free non-unital algebras, where free non-unital algebras can be described as reduced tensor algebras $F=\bar T(V)=\bigoplus_{n\geq 1} V^{\otimes n}.$ As in the previous section Proposition \ref{proposition_subcategory_retract} allows us to consider only presentations $F\epi A,$  where $F$ is a free algebra.  We set $F_\natural = HC_0(F)=F/[F,F]$ and $R={\rm Ker}(F\epi A).$

\begin{Theorem}[{cf. \cite{DonadzeInassaridzeLadra}}] Assume that ${\sf char}(k)=0.$ Then for any $n\geq 0$ there is an isomorphism
$$\colim_n\  F_\natural= HC_n(A).$$
\end{Theorem}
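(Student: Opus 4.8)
The plan is to repeat, almost verbatim, the argument proving Theorem \ref{theorem_cyclic}, replacing the reduced cyclic complex of the unital free algebra by the cyclic complex of the non-unital free algebra. Concretely, I would let $\bar\BB(F^+)_{\sf red}$ denote the reduced cyclic bicomplex of the unitalization $F^+=k\oplus F$; since $\overline{F^+}=F$, its entries are the tensor powers $F^{\otimes(m-n+1)}$, and its total complex is exactly the complex computing the non-unital cyclic homology $HC_*(F)$ (the non-unital cyclic complex of an algebra $A$ coincides with the reduced cyclic complex of $A^+$). I then consider the complex of functors ${\sf Pres}(A)\to{\sf Vect}$ given by ${\bf P}_\bullet(F\epi A)={\sf Tot}\ \bar\BB(F^+)_{\sf red}$, and aim to show it is a colim-acyclic resolution of the functor $F_\natural$. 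Granting this, the acyclic-resolution principle used in the proof of Theorem \ref{theorem_cyclic} gives $\colim_n\ F_\natural=H_n(\colim_0\ {\bf P}_\bullet)$, and it remains only to identify $\colim_0\ {\bf P}_\bullet$ with the cyclic complex of $A$.

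Since $H_n({\bf P}_\bullet)=HC_n(F)$, the first thing to check is that $HC_n(F)=0$ for $n\geq 1$ and $HC_0(F)=F_\natural$ for a free non-unital algebra $F=\bar T(V)$. This is where I expect the only genuine work to lie, and it is the main obstacle. I would deduce it from the unital computation already used above: because $HC_*(F)=\overline{HC}_*(F^+)$ and $F^+\cong T(V)$, the vanishing $\overline{HC}_n(T(V))=0$ for $n\geq 1$ (the input of Theorem \ref{theorem_cyclic}, coming from \cite[Theorem 3.1.6]{Loday}) yields $HC_n(F)=0$ for $n\geq 1$. In degree zero, $HC_0(F)=\overline{HC}_0(T(V))=T(V)/(k\cdot 1+[T(V),T(V)])$; since $k\cdot 1$ is central we have $[T(V),T(V)]=[F,F]$ and $T(V)=k\cdot 1\oplus F$, so this quotient is canonically $F/[F,F]=F_\natural$, naturally in the presentation.

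For colim-acyclicity I would argue exactly as in Theorem \ref{theorem_cyclic}. Each ${\bf P}_n$ is a finite direct sum of tensor powers $F^{\otimes j}$; Theorem \ref{th_derived} gives $\colim_0\ F=A$ and $\colim_m\ F=0$ for $m\geq 1$, and since $k$ is a field every functor is $k$-flat, so the K\"unneth theorem for derived colimits (Proposition \ref{prop_Kunneth_colim}) upgrades this to $\colim_0\ F^{\otimes j}=A^{\otimes j}$ and $\colim_m\ F^{\otimes j}=0$ for $m\geq 1$. Hence each ${\bf P}_n$ is colim-acyclic and $\colim_0\ {\bf P}_\bullet={\sf Tot}\ \bar\BB(A^+)_{\sf red}$, which is precisely the complex computing $HC_*(A)$. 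Combining this with the homology computation of the previous paragraph, ${\bf P}_\bullet$ is a colim-acyclic resolution of $F_\natural$, and therefore $\colim_n\ F_\natural=H_n({\sf Tot}\ \bar\BB(A^+)_{\sf red})=HC_n(A)$, as desired.
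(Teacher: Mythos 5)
Your proposal is correct and is essentially the paper's own argument: the paper's proof of this theorem is literally the single line ``the proof is similar to the proof of Theorem \ref{theorem_cyclic},'' and what you have written is precisely that adaptation, carried out via the identification $HC_*(F)=\overline{HC}_*(F^+)$ with $F^+\cong T(V)$, the vanishing $\overline{HC}_n(T(V))=0$ for $n\geq 1$ from \cite[Theorem 3.1.6]{Loday}, and the K\"unneth theorem for derived colimits to get colim-acyclicity of the columns. The only difference is that you supply the details the paper leaves implicit (in particular the degree-zero identification $\overline{HC}_0(F^+)=F/[F,F]$), and these are all correct.
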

\begin{proof}
The proof is similar to the proof of Theorem \ref{theorem_cyclic}.
\end{proof}

\begin{Theorem}[Hopf's formula for $HC_{2n+1}$]\label{th_hopf} Let ${\sf char}(k)=0.$ Then for any $n\geq 0$ there is a natural isomorphism
$$ HC_{2n+1}(A)=\frac{R^{n+1}\cap [F,F]}{[R,R^n]}.$$
\end{Theorem}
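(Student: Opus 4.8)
The plan is to deduce this non-unital statement from its unital reduced counterpart, Theorem \ref{th_hopf_red}, by passing to the unitalization. Given a non-unital free presentation $F \epi A$, I would form the unital algebras $F^+ = k \oplus F$ and $A^+ = k \oplus A$ and the induced surjection $F^+ \epi A^+$. Since $F = \bar T(V)$ is a free non-unital algebra, its unitalization $F^+ = T(V)$ is a \emph{free unital} algebra, so $F^+ \epi A^+$ is a genuine object of ${\sf Pres}(A^+)$ and Theorem \ref{th_hopf_red} applies to it directly.

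The first step is to record the standard identification $HC_{2n+1}(A) = \overline{HC}_{2n+1}(A^+)$ between the cyclic homology of the non-unital algebra $A$ and the reduced cyclic homology of its unitalization (this is standard, cf. \cite{Loday}). The second step is purely algebraic bookkeeping. Writing $R = {\rm Ker}(F^+ \epi A^+)$, one checks that $R = {\rm Ker}(F \epi A)$ sits inside $F$ with no constant term, that $[F^+, F^+] = [F,F]$ because the adjoined unit is central, and that the powers $R^{n+1}$ and the commutators $[R, R^n]$ are computed identically in $F$ and in $F^+$. Consequently Theorem \ref{th_hopf_red} yields
$$ HC_{2n+1}(A) = \overline{HC}_{2n+1}(A^+) = \frac{R^{n+1} \cap ([F,F] + k\cdot 1)}{[R,R^n]}. $$

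It then remains to simplify the numerator. Since every element of $R^{n+1}$ lies in $F$, which meets $k\cdot 1$ trivially, any element of $R^{n+1}$ landing in $[F,F] + k\cdot 1 = k\cdot 1 \oplus [F,F]$ must in fact lie in $[F,F]$; hence $R^{n+1} \cap ([F,F] + k\cdot 1) = R^{n+1} \cap [F,F]$, and the claimed formula follows. The only genuinely delicate point is the first step, matching the non-unital cyclic theory with $\overline{HC}(A^+)$; the hard part is to verify that the bicomplex defining $HC_*(A)$ used here is \emph{literally} the reduced bicomplex $\bar\BB(A^+)_{\sf red}$ from the proof of Theorem \ref{theorem_cyclic}, since $(\bar\BB(A^+)_{\sf red})_{n,m} = A^{\otimes(m-n+1)}$ is exactly the non-unital cyclic complex, so that the identification is an equality of complexes rather than merely a quasi-isomorphism. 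Alternatively, one can avoid unitalization and mimic the proof of Theorem \ref{th_hopf_red} directly, combining the non-unital form of Quillen's exact sequence from \cite[Th. 5.11]{QuillenC} with the non-unital analogue of the decomposition $0 \to F_\natural \xrightarrow{\tilde d} \Omega(F)_\natural \xrightarrow{\alpha} [F,F] \to 0$ of Lemma \ref{lemma_Omega(F)_decomposition}; both routes lead to the same kernel computation.
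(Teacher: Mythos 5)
Your proposal is correct and follows essentially the same route as the paper: unitalize to $F_+\epi A_+$, invoke the identity $HC_*(A)=\overline{HC}_*(A_+)$, apply Theorem \ref{th_hopf_red}, and reduce the numerator via $[F_+,F_+]=[F,F]$ and $F\cap([F,F]+k\cdot 1)=[F,F]$ (using $R^{n+1}\subseteq F$). The extra care you take about the literal identification of the non-unital cyclic bicomplex with $\bar\BB(A_+)_{\sf red}$, and the sketched alternative via a non-unital Quillen sequence, go beyond what the paper records but do not change the argument.
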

\begin{proof}
We denote by $A_+$ the algebra with added formal unit $A_+=A\oplus k.$ It is well known that $HC_*(A)=\overline{HC}_*(A_+).$ Theorem \ref{th_hopf_red} implies that
$$ HC_{2n+1}(A)=\frac{R^{n+1}\cap ([F_+,F_+]+k\cdot 1)}{[R,R^n]}.$$
Then the assertion follows from the equations $[F_+,F_+]=[F,F]$ and $F\cap ([F,F]+k\cdot 1)=[F,F].$
\end{proof}

\section{\bf K-functors}

Recall that for any unital ring $A$, there is a notion of
its Steinberg group $St(A)$.
Some of its properties are listed here:
\begin{itemize}
\item
$St(A)$ is the quotient of a free group on generators $e_{i,j}(x)$ for integers $i\neq j$ and $x\in A$ modulo
relations
\begin{align*}
& e_{i,j}(x)e_{i,j}(y)= e_{i,j}(x+y),\\
& [e_{i,j}(x), e_{j,k}(y)] = e_{i,k}(xy) \text{ if } i\neq k,\\
& [e_{i,j}(x), e_{i',j'}(y)] = 1 \text{ if } i\neq j' \text{ and } j \neq i'.
\end{align*}
\item
$H_1(St(A))=H_2(St(A))=0.$

\item
$H_3(St(A))=K_3(A)$ (see \cite{gersten}).

\item
There is an exact sequence of groups
$1 \to K_2(A) \to St(A) \to E(A) \to 1$
and moreover it is the universal central extension of $E(A):=[GL(A), GL(A)].$
\end{itemize}

For a ring homomorphism $A \stackrel{f}\to B$ we denote by 
$K_n(A,B)$ the $n$-th relative $K$-theory group, i.e. the $n$-th homotopy 
group of the homotopy fiber of the induced map of spectra $K(A) \to K(B)$.

Now let $k$ be a noetherian regular commutative ring and $A$ be an $k$-algebra. We denote by 
$\tilde{K}_n(A)$ the $n$-th reduced $K$-theory group, $K_{n-1}(k,A)$. By definition there is an exact sequence 
$$0 \to \frac{K_n(A)}{K_n(k)} \to \tilde{K}_n(A) \to {\rm Ker}(K_{n-1}(k) \to K_{n-1}(A)) \to 0.$$
If $A$ admits an augmentation $\tilde{K}_n(A)$ is just the quotient $\frac{K_n(A)}{K_n(k)}$. 

For any functor ${\sf Rings} \stackrel{\mathcal F}\to \mathcal{A}$ we denote by the same letter its extension to the category of
non-unital rings ${\sf Rngs}$ given by the formula $\mathcal F(R) = \frac{\mathcal F(R\times \mathbb{Z})}{\mathcal F(\mathbb{Z})} = {\rm Ker} (\mathcal F(R\times \mathbb{Z}) \to \mathcal F(\mathbb{Z})).$

We will need the following lemma well-known for specialists in the field. The essential ingredients for the lemma were given in Keune’s paper \cite{keune}. 
Note that in \cite[Def. IV.13.17]{magurn} the statement of the lemma is used 
as a definition of the relative $K_2$. 

\begin{Lemma}\label{relativeK2description}
Let $F \to A$ be a surjective ring homomorphism. 
Set $D = F \times_A F$. 

Then the group $K_2(F,A)$ is isomorphic to the quotient 
$$\frac{K_2(D)}{K_2(F) + \Gamma}$$
where $K_2(F)$ is considered as a subgroup of $K_2(D)$ via the diagonal split 
embedding $F \to D$ and $\Gamma$ is the subgroup of $St(D)$ generated by 
commutators 
$[e_{1,2}(x,0),e_{2,1}(0,y)]$ for all $x,y \in {\rm Ker}(F \to A)$. 
\end{Lemma}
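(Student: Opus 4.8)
\section*{Proof proposal}

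The plan is to strip off the diagonal copy of $K_2(F)$ using the splitting, reinterpret the result as a relative $K_2$-group, and then reduce the remaining statement to Keune's excision computation. Write $I = {\rm Ker}(F \to A)$, denote by $\pi : F \to A$ the given surjection, by $p_1,p_2 : D \to F$ the two projections of the fibre square, and by $\Delta : F \to D$, $f \mapsto (f,f)$, the diagonal. Both $p_i$ are split surjections with $p_i\Delta = {\sf id}_F$, their kernels are the ideals $I' = 0 \times I = {\rm Ker}(p_1)$ and $I'' = I \times 0 = {\rm Ker}(p_2)$, and these satisfy $I'I'' = I''I' = 0$. Before anything else I would check that the quotient in the statement is meaningful, i.e.\ that $\Gamma \subseteq K_2(D)$: for $x,y \in I$ the elements $(x,0)$ and $(0,y)$ of $D$ satisfy $(x,0)(0,y) = 0 = (0,y)(x,0)$, so the image of $[e_{1,2}(x,0),e_{2,1}(0,y)]$ in $E(D)$ is the commutator of the two corresponding elementary matrices, which is trivial precisely because the two relevant off-diagonal entries multiply to $0$ in both orders. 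Hence every generator of $\Gamma$ lies in ${\rm Ker}(St(D) \to E(D)) = K_2(D)$, and since $K_2(D)$ is abelian the subgroup $\Delta_*K_2(F) + \Gamma$ is well defined.

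Next I would peel off the diagonal summand. Because $p_1$ is a split surjection, the relative $K$-theory long exact sequence of $p_1$ splits into short exact sequences, giving a natural identification $K_2(D)/\Delta_*K_2(F) \cong K_2(D,F)$, the relative group of $p_1 : D \to F$. The pullback relation $\pi p_1 = \pi p_2$ (commutativity of the fibre square) says exactly that $(p_2,\pi)$ is a morphism of surjections from $(p_1 : D \to F)$ to $(\pi : F \to A)$, and so induces a comparison homomorphism $\theta : K_2(D,F) \to K_2(F,A)$. I would first record the two easy facts about $\theta$. It is surjective: the relative group $K_2(F,A)$ is generated by Dennis--Stein symbols at least one of whose arguments lies in $I$, and each such symbol lifts through $p_2$ to $K_2(D,F)$ since $p_2$ restricts to an isomorphism $I' \xrightarrow{\sim} I$. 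And $\bar\Gamma \subseteq {\rm Ker}(\theta)$: under $(p_2)_*$ one has $e_{1,2}(x,0) \mapsto e_{1,2}(0) = 1$ and $e_{2,1}(0,y) \mapsto e_{2,1}(y)$, so every generating commutator of $\Gamma$ maps to $[1,e_{2,1}(y)] = 1$. (One also checks $(p_1)_*\Gamma = 1$, which confirms that $\Gamma$ already lives in the summand identified with $K_2(D,F)$.)

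The heart of the matter, and the step I expect to be the main obstacle, is the reverse inclusion ${\rm Ker}(\theta) \subseteq \bar\Gamma$. This asserts that the excision defect between the relative $K_2$ of the ideal $I' \subset D$ and of $I \subset F$ is generated exactly by the mixed commutators $[e_{1,2}(x,0),e_{2,1}(0,y)]$. This is where Keune's input is essential: working inside the relative Steinberg group and exploiting that the two copies $I'$ and $I''$ of $I$ annihilate each other in $D$, one identifies the kernel of the natural surjection onto $K_2(F,A)$ with the subgroup generated by precisely these basic symbols. Assembling the identification $K_2(D)/\Delta_*K_2(F) \cong K_2(D,F)$ with the resulting isomorphism $K_2(D,F)/\bar\Gamma \cong K_2(F,A)$ then yields
$$ K_2(F,A) \cong \frac{K_2(D)}{K_2(F) + \Gamma}, $$
which is the assertion of the lemma.
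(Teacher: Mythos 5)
Your overall architecture is sound and, after unwinding the identification $K_2(D)/\Delta_*K_2(F)\cong K_2(D,F)\cong {\rm Ker}(K_2(D)\to K_2(F))$, it is essentially the same reduction the paper performs: both arguments come down to showing that the comparison map from the split‑off piece of $K_2(D)$ to $K_2(F,A)$ is surjective with kernel exactly $\Gamma$. The problem is that you leave precisely that step unproved. Your paragraph beginning ``The heart of the matter'' correctly names the inclusion ${\rm Ker}(\theta)\subseteq\bar\Gamma$ as the decisive point, but then only gestures at ``Keune's input'' without saying which statement of Keune you are using or how it yields the claim; as written this is a description of what a proof must accomplish, not a proof. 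The missing ingredient is Keune's Theorem 12: $St(F,A)\cong H/\Gamma^H$, where $H={\rm Ker}(St(D)\to St(F))$ is the kernel of the projection onto the first component. From this the lemma follows by elementary bookkeeping: since each generator $[e_{1,2}(x,0),e_{2,1}(0,y)]$ dies in $E(D)=E(F)\times_{E(A)}E(F)$ (your own observation that $(x,0)(0,y)=(0,y)(x,0)=0$), the subgroup $\Gamma$ lies in $K_2(D)$, hence is central in $St(D)$, so $\Gamma^H=\Gamma$ and $St(F,A)=H/\Gamma$. A diagram chase in $K_2(D)\rightarrowtail St(D)\twoheadrightarrow E(D)$ then identifies ${\rm Ker}\bigl(H/\Gamma\to E(D)\bigr)$, which is $K_2(F,A)$ by Keune's description of the relative $K_2$, with ${\rm Ker}(K_2(D)\to K_2(F))/\Gamma$. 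This single chase delivers surjectivity and the kernel computation simultaneously.

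A secondary consequence: your surjectivity argument for $\theta$ rests on the assertion that $K_2(F,A)$ is generated by Dennis--Stein symbols with one argument in $I$. That is itself a nontrivial theorem about relative $K_2$ which you neither prove nor cite precisely, and it becomes superfluous once the argument above is run, since surjectivity falls out of the identification $St(F,A)=H/\Gamma$ for free. I would recommend restructuring the proof around Keune's Theorem 12 as the single external input, rather than splitting the load between an unreferenced generation statement and an unspecified excision computation.
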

\begin{proof}
For any surjective ring homomorphism $F \to A$ there is a group $St(F,A)$ and 
a map $St(F,A) \to {\rm Ker}(E(F) \to E(A))$ such that $K_2(F,A)$ is the kernel of the map 
(see \cite[\S5]{keune}). 
By Theorem 12 of ibid. the group is computed as 
$$St(F,A) = \frac{H}{\Gamma^H}$$ 
where $H$ is the kernel of the projection $St(D) \to St(F)$ onto 
the first component. 
Note that $E(F\times_A F) = E(F) \times_{E(A)} E(F)$, so every generator of 
$\Gamma$ 
is sent to a trivial element via the map $H \to E(D)$. 
This shows that the map 
$$St(F,A) \to {\rm Ker}(E(D)\to E(F)) = {\rm Ker}(E(F)\to E(A))$$ 
is well-defined. 
Moreover, this tells us that $\Gamma$ is actually a subgroup of $K_2(D)$ 
and that $St(F,A) = H/\Gamma$ (since $K_2$ is central by \cite[Pr. 13]{keune}).

Now the natural map 
${\rm Ker} (K_2(D) \to K_2(F)) \stackrel{f}\to H$ 
induces a map on quotients 
$$\frac{{\rm Ker}(K_2(D) \to K_2(F)}{\Gamma} \to \frac{H}{\Gamma}.$$ 
The map is injective since the map $K_2(D) \to St(D)$ is. 
Moreover, an easy diagram chasing argument shows that it surjects onto the kernel of the map from $H/\Gamma$ to $E(F\times_A F)$. 

Indeed, we have a commutative diagram 
$$
\begin{tikzcd}
\frac{{\rm Ker}(K_2(D) \to K_2(F))}{\Gamma}\arrow[r, hook] & \frac{H}{\Gamma}\arrow[ddr]\\
{\rm Ker}(K_2(D) \to K_2(F))\arrow[r,hook]\arrow[d,hook]\arrow[u,two heads] & H\arrow[d, hook]\arrow[u, two heads]\arrow[dr] \\
K_2(D) \arrow[r, hook] & St(D)\arrow[r, two heads] & E(D)
\end{tikzcd}
$$
whose lower row is exact. 
Let 
$x \in H/\Gamma$ be an element mapped to a trivial elementary matrix. Denote 
its lift to $H$ by $\tilde{x}$. By exactness of the lower row the image of $\tilde{x}$ in $St(D)$ 
belongs to $K_2(D)$. It is also mapped to a trivial element in $St(F)$, therefore, it gives rise to an element $y$ of ${\rm Ker} (K_2(D) \to K_2(F))$. It is  mapped back to $\tilde{x}$ by $f$ since the map $H \to St(D)$ is an inclusion. Now the image of $y$ under the upper left vertical map is an element in the preimage of $x$. 

Hence, we obtain an isomorphism 
$$\frac{{\rm Ker}(K_2(D) \to K_2(F))}{\Gamma} \cong K_2(F,A).$$
Lastly, the diagonal map $F \to D$ gives a splitting of the map 
$K_2(D) \to K_2(F)$ and an isomorphism 
$$\frac{{\rm Ker}(K_2(D) \to K_2(F))}{\Gamma} \cong \frac{K_2(D)/K_2(F)}{\Gamma}= \frac{K_2(D)}{K_2(F)+\Gamma}.$$

\end{proof}

\begin{Lemma}\label{colimGamma}
Let $A$ be a ring without unit or a $k$-algebra. 
Consider $\Gamma$, the subgroup of $St(F \times_A F)$ generated by
commutators $[e_{1,2}(x,0),e_{2,1}(0,y)]$ for all $x,y \in {\rm Ker}(F \to A)$, as a 
functor on the category of presentations of $A$. 

Then $\colim\ \Gamma = 1$, where the colimit is taken over the category of 
presentations in the category of $k$-algebras (resp. the category 
${\sf Rngs}$). 
\end{Lemma}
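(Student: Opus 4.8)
The plan is to use the description of the colimit over a strongly connected category from Proposition \ref{colimitintuitive}. Fix a presentation $c_0=(F\epi A)$ with kernel $R={\rm Ker}(F\epi A)$. Since ${\sf Pres}(A)$ is strongly connected, the canonical map $i_{c_0}\colon \Gamma(c_0)\epi \colim\ \Gamma$ is a surjective group homomorphism, and for any pair of morphisms $\alpha,\beta\colon c\to c_0$ in ${\sf Pres}(A)$ one has $i_{c_0}\,\Gamma(\alpha)=i_{c_0}\,\Gamma(\beta)$. (Proposition \ref{colimitintuitive} is stated for an abelian target, but its proof is a pure cofinality argument for the subcategory $\mathcal C_0\hookrightarrow{\sf Pres}(A)$ and applies verbatim to the group-valued functor $\Gamma$.) As $\Gamma(c_0)$ is generated by the elements $g_{x,y}:=[e_{1,2}(x,0),e_{2,1}(0,y)]$ with $x,y\in R$, it suffices to show that $i_{c_0}(g_{x,y})=1$ for every such generator.

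The key idea is to lift the two entries $x,y$ to genuinely free generators and then kill them. First I would enlarge the presentation: let $F'=F\langle s,t\rangle$ be the free algebra on the generators of $F$ together with two new generators $s,t$ (in the non-unital case this is the coproduct of $F$ with the free algebra on $\{s,t\}$), and let $\pi'\colon F'\epi A$ extend $\pi\colon F\epi A$ by $\pi'(s)=\pi'(t)=0$. This gives an object $c=(F'\epi A)$ of ${\sf Pres}(A)$ with $s,t\in R'={\rm Ker}(\pi')$. Because $x,y\in R$ satisfy $\pi(x)=\pi(y)=0$, the two algebra homomorphisms
$$\alpha\colon F'\to F,\qquad \alpha|_F={\rm id},\ \alpha(s)=x,\ \alpha(t)=y,$$
$$\beta\colon F'\to F,\qquad \beta|_F={\rm id},\ \beta(s)=\beta(t)=0$$
both satisfy $\pi\alpha=\pi'=\pi\beta$, and hence define morphisms $\alpha,\beta\colon c\to c_0$ in ${\sf Pres}(A)$.

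Next I would track the distinguished element $h:=[e_{1,2}(s,0),e_{2,1}(0,t)]\in\Gamma(c)$, which is legitimate since $s,t\in R'$. The functoriality of $\Gamma$ is induced by the maps $F'\times_A F'\to F\times_A F$ on fibre squares coming from $\alpha$ and $\beta$, which carry the pairs $(s,0),(0,t)$ to $(x,0),(0,y)$ and to $(0,0),(0,0)$ respectively. Hence $\Gamma(\alpha)(h)=[e_{1,2}(x,0),e_{2,1}(0,y)]=g_{x,y}$, while $\Gamma(\beta)(h)=[e_{1,2}(0,0),e_{2,1}(0,0)]=1$ because $e_{i,j}(0)=1$ in the Steinberg group. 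Applying the identity $i_{c_0}\,\Gamma(\alpha)=i_{c_0}\,\Gamma(\beta)$ to $h$ yields $i_{c_0}(g_{x,y})=i_{c_0}(1)=1$. Since the $g_{x,y}$ generate $\Gamma(c_0)$ and $i_{c_0}$ is surjective, this forces $\colim\ \Gamma=1$.

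The argument is short, so I do not expect a serious obstacle beyond bookkeeping; the one point that needs genuine care is the verification that $\Gamma$ really is a functor on ${\sf Pres}(A)$, namely that the induced maps $F'\times_A F'\to F\times_A F$ are well-defined ring homomorphisms of the fibre squares carrying commutator generators to commutator generators. This is routine: the condition $\pi\alpha=\pi'$ forces $(\alpha(u),\alpha(v))$ to lie in $F\times_A F$ whenever $(u,v)\in F'\times_A F'$, so $\alpha$ induces such a map (and likewise $\beta$). The only other subtlety is that Proposition \ref{colimitintuitive} transfers to the non-abelian, group-valued setting, which holds because the cofinality of $\mathcal C_0\hookrightarrow{\sf Pres}(A)$ established in its proof does not use the additive structure of the target. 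The same construction works identically for presentations of $k$-algebras and for presentations in ${\sf Rngs}$.
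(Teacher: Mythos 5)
Your proof is correct and follows essentially the same strategy as the paper's: both reduce to Proposition \ref{colimitintuitive} and kill each generator $[e_{1,2}(x,0),e_{2,1}(0,y)]$ by realizing $x,y$ as images of free generators that can alternatively be sent to $0$. The only (cosmetic) difference is that the paper adjoins a single variable to $F$ itself and compares endomorphisms of that one presentation (which forces a two-step argument to handle the second entry $y$), whereas you adjoin two fresh variables to form a larger presentation $F'\epi A$ and compare two morphisms $F'\to F$, disposing of both entries at once; your version is, if anything, slightly cleaner, and your explicit caveat about transporting Proposition \ref{colimitintuitive} to the group-valued setting is a point the paper passes over silently.
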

\begin{proof}
By Proposition 2.13, it suffices to show that for a fixed presentation $F \stackrel{p}\to A$ the coequalizer of all the maps $\Gamma(F) \to \Gamma(F)$ 
induced by maps of presentations $F \to F$ is trivial. 

Choose a set $S$ such that $F = k\langle S\rangle$ (resp. $F = F\langle S \rangle$). 
Adding a new variable if necessary we can assume that there is an element $s\in S$ such that $p(s) = 0$.
For an element $a \in {\rm Ker}(F\to A)$ consider the map of sets
$S \to F$ that is identitical on $S - \{ s_0\}$ and sends $s_0$ to $a$.
By the universal property of a free algebra (resp. a free ring) the map 
extends uniquely to a map of rings
$F\stackrel{f_r}\to F$.
The equality $p \circ f_r(s) = p(s)$ holds for any $s\in S$ by design, hence 
$p\circ f_r = p$ by the universal property of a free algebra (resp. a free 
ring).

The map $(f_r)_*$ sends $[e_{1,2}(s_0,0), e_{2,1}(0,b)]$ to $[e_{1,2}(a,0), e_{2,1}(0,b)]$ while the map $(f_0)_*$ sends the same element to 
$[e_{1,2}(0,0), e_{2,1}(0,b)] = 1$ 
for any $b \in k\langle S-\{s_0\} \rangle$ (resp. $F\langle S-\{s_0\} \rangle$). 
For arbitrary $b$, moreover, $(f_0)_*$ sends $[e_{1,2}(a,0), e_{2,1}(0,b)]$ to 
$[e_{1,2}(\tilde{a},0), e_{2,1}(0,\tilde{b})]$, where $\tilde{b} \in k\langle S-\{s_0\} \rangle$ (resp. $F\langle S-\{s_0\} \rangle$).
Therefore all generators of $\Gamma(F)$ become trivial in the coequalizer and 
the coequalizer itself is trivial.
\end{proof}

\begin{Proposition}\label{colimK2}
1. Let $A$ be a $k$-algebra.
Then
$$\colim\ \tilde{K}_2(F\times_A F) = \tilde{K}_3(A),$$
where the colimit is taken over the category of presentations in the category of $k$-algebras.

2. Let $A$ be a non-unital ring. Then
$$\colim\ K_2(F\times_A F) = K_3(A),$$
where the colimit is taken over the category of presentations in the category ${\sf Rngs}$.
\end{Proposition}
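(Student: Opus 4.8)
The plan is to reduce both statements to the colimit formula for group homology (Theorem \ref{thm_group_homology}) applied to the Steinberg group, using the relative $K_2$ description of Lemma \ref{relativeK2description} together with the vanishing of $\colim\ \Gamma$ from Lemma \ref{colimGamma}. I will treat part (2) first and then deduce part (1) by the same argument, since the reduced $K$-groups are defined from the non-unital extension $\mathcal{F}(R) = \mathrm{Ker}(\mathcal{F}(R\times\mathbb{Z})\to\mathcal{F}(\mathbb{Z}))$ and all the functors in play commute with this construction.

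First I would fix a presentation $F\epi A$, set $D=F\times_A F$, and combine Lemma \ref{relativeK2description} with the exact localization sequence relating relative and absolute $K$-theory. Since $F$ is free, the surjection $F\epi A$ has the property that $St(F)$ has trivial homology in degrees $1,2$ and $H_3(St(F))=K_3(F)$, which vanishes for free $F$ in the relevant sense; more precisely, the relative group $K_2(F,A)$ fits into the long exact sequence $K_3(F)\to K_3(A)\to K_2(F,A)\to K_2(F)$, so I would argue that the connecting map identifies $K_3(A)$ with a suitable quotient built from $K_2(F,A)$ after passing to the colimit. The key computational input is that by Lemma \ref{relativeK2description} we have the natural isomorphism
$$K_2(F,A)\cong \frac{K_2(D)}{K_2(F)+\Gamma},$$
so I would apply $\colim$ to the short exact sequence (of functors on $\mathsf{Pres}(A)$)
$$K_2(F)+\Gamma \mono K_2(D) \epi K_2(F,A).$$

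The heart of the argument is to compute the derived colimits of each term. I would use Theorem \ref{thm_group_homology}, or rather the observation that $K_2$ and $K_3$ arise as group homology of the Steinberg group ($H_3(St(A))=K_3(A)$), to identify $\colim_n$ of $K_2(F\times_A F)$ with the corresponding homology of $A$. The functor $\Gamma$ contributes nothing: Lemma \ref{colimGamma} gives $\colim\ \Gamma = 1$, so $\Gamma$ drops out of the zeroth colimit and I must check it does not obstruct the relevant degree. The diagonal split embedding $F\to D$ splits off the $K_2(F)$ summand, and since $F$ is free the higher colimits of $K_2(F)$ (equivalently the derived functors measuring $K_2$ of a free algebra) are controlled, allowing me to pass from $\colim\ K_2(D)$ to $\colim\ K_2(F,A)$ and finally to $K_3(A)$.

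The main obstacle I anticipate is bookkeeping the interaction between the group $\Gamma$ and the central subgroup $K_2(D)$: Lemma \ref{relativeK2description} shows $\Gamma\subseteq K_2(D)$ and that the quotient is taken inside a central extension, but to apply the long exact sequence of colimits cleanly I need $\Gamma$ to behave as a genuine subfunctor with vanishing colimit in the right degree, not merely $\colim_0\ \Gamma=1$. I expect this is handled by the strong connectedness of $\mathsf{Pres}(A)$ (Proposition \ref{colimitintuitive}) combined with the explicit coequalizer-triviality computation already carried out in Lemma \ref{colimGamma}; the delicate point will be ensuring that the centrality makes $\Gamma$ a direct summand phenomenon rather than a source of higher Tor-type corrections, which I would verify using the Künneth formula for colimits (Proposition \ref{prop_Kunneth_colim}) and the freeness of the relevant modules over the free algebra $F$.
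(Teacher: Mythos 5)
Your proposal assembles the right ingredients (Lemma \ref{relativeK2description}, Lemma \ref{colimGamma}, right-exactness of $\colim$) but it is missing the one step that actually makes the argument close, and it substitutes for it a route that does not work. The linchpin of the paper's proof is Gersten's computation of the $K$-theory of free rings: $K_2(k)\to K_2(F)$ is an isomorphism for $F$ free (\cite[Cor. 3.9]{gersten2}), whence by the five lemma $K_2(F,A)\cong K_2(k,A)=\tilde K_3(A)$ for \emph{every} presentation $F\epi A$. That is, $K_2(F,A)$ is a \emph{constant} functor on ${\sf Pres}(A)$, already before any colimit is taken. Once this is known, Lemma \ref{relativeK2description} gives an exact sequence of functors $\Gamma\to \tilde K_2(F\times_A F)\to \tilde K_3(A)\to 0$ with constant last term, and applying the right-exact functor $\colim$ together with $\colim\,\Gamma=1$ finishes the proof in one line. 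You gesture at the relative long exact sequence but only claim vaguely that "$K_3(F)$ vanishes in the relevant sense" and that the identification with $K_3(A)$ happens "after passing to the colimit"; without nailing down the constancy of $K_2(F,A)$ the argument does not close.

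The alternative route you propose — identifying $\colim_n \tilde K_2(F\times_A F)$ via Theorem \ref{thm_group_homology} applied to the Steinberg group — would fail for a structural reason: that theorem computes $H_{n+1}(G,M)$ as derived colimits over the category of presentations of a \emph{group} $G$ by \emph{free groups}, whereas here the colimit is taken over presentations of the \emph{algebra} $A$ by free algebras. The Steinberg groups $St(F\times_A F)$ do not form free-group presentations of anything in the relevant sense, so the group-homology colimit formula simply does not apply. Similarly, your worries about higher colimits of $\Gamma$, Tor corrections, and the K\"unneth formula are unnecessary: since the statement only concerns $\colim_0$ and the last term of the exact sequence is constant, right-exactness of $\colim$ (preservation of cokernels) together with $\colim_0\,\Gamma=1$ is all that is needed; no vanishing of higher colimits of $\Gamma$ or of $K_2(F)$ enters the argument.
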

\begin{proof}
Let $A$ be a $k$-algebra and let $F \to A$ be a surjective ring homomorphism where $F$ is a free $k$-algebra. 
By Lemma \ref{relativeK2description} we have an exact sequence 
$$\Gamma \to K_2(F\times_A F)/K_2(F) \to K_2(F,A) \to 0$$
where $\Gamma$ is the subgroup of $St(F\times_A F)$ generated by by 
commutators 
$[e_{1,2}(x,0),e_{2,1}(0,y)]$ for all $x,y \in {\rm Ker}(F \to A)$. 

By \cite[Cor. 3.9]{gersten2}, the natural map $K_2(k) \to K_2(F)$ is an isomorphism, so by 5-Lemma $\tilde{K}_3(A)=K_2(k,A) \to K_2(F,A)$ is also an isomorphism. 
Hence the exact sequence above is isomorphic to the exact sequence 
$$\Gamma \to \tilde{K}_2(F\times_A F) \to \tilde{K}_3(A) \to 0.$$
Now Lemma \ref{colimGamma} and right-exactness of the colimit functor imply
the first part of the statement. 
The same argument applied to $F_+ \to A_+$, where $F \to A$ is a surjection from a free ring to a non-unital ring $A$,  
yields the second part of the statement.
\end{proof}

\newpage

\centering \includegraphics[width=0.35\textwidth]{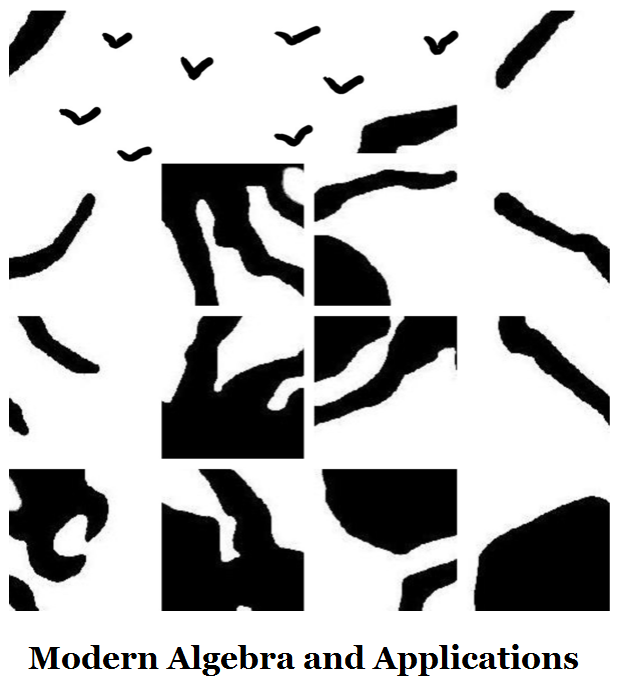}

\end{document}